\numberwithin{equation}{section}
\definecolor{brightcerulean}{rgb}{0.11, 0.67, 0.84}
\definecolor{cerulean}{rgb}{0.0, 0.48, 0.65}
\definecolor{Gray}{rgb}{0.5, 0.5, 0.5}
\newtheorem{thm}{Theorem}[section]
\newtheorem{lem}[thm]{Lemma}
\newtheorem{cor}[thm]{Corollary}
\newtheorem{prop}[thm]{Proposition}
\newtheorem{definition}[thm]{Definition}
\newtheorem{rem}[thm]{Remark}
\newcommand\bZ{{\mathbb Z}}
\newcommand\ve{\varepsilon}
\newcommand\vf{\varphi}
\newcommand\EE{{\mathbb E}}
\newcommand\PP{{\mathbb P}}
\newcommand\RR{{\mathbb R}}
\newcommand\ZZ{{\mathbb Z}}
\newcommand{\mc}[1]{{\mathcal #1}}
\newcommand{\bb}[1]{{\mathbb #1}}
\newcommand{\p}{\prime}
\begin{document}

\title[Symmetric exclusion with long jumps]{Slow to fast infinitely extended reservoirs for the symmetric exclusion process with long jumps}
\author{C\'edric Bernardin}
\address{Universit\'e C\^ote d'Azur, CNRS, LJAD\\
Parc Valrose\\
06108 NICE Cedex 02, France}
\email{{\tt cbernard@unice.fr}}

\author{P.  Gon\c calves}
\address{Center for Mathematical Analysis,  Geometry and Dynamical Systems,
Instituto Superior T\'ecnico, Universidade de Lisboa,
Av. Rovisco Pais, 1049-001 Lisboa, Portugal and   Institut  Henri
Poincar\'e, UMS 839 (CNRS/UPMC), 11 rue Pierre et Marie Curie, 75231 Paris Cedex 05, France.}
\email{{\tt patricia.goncalves@math.tecnico.ulisboa.pt}}

\author{B. Jim\'enez Oviedo}
\address{Universit\'e C\^ote d'Azur, CNRS, LJAD\\
Parc Valrose\\
06108 NICE Cedex 02, France}
\email{{\tt byron@unice.fr}}

\thanks{}

\date{\today.}
\begin{abstract}
We consider an exclusion process with long jumps  in the box $\Lambda_N=\{1, \ldots,N-1\}$, for $N \ge 2$, in contact with infinitely extended reservoirs on its left and on its right. The jump rate is described by a transition probability $p(\cdot)$ which is symmetric, with infinite support but with finite variance.  The reservoirs add or remove particles with rate proportional to $\kappa N^{-\theta}$, where $\kappa>0$ and $\theta \in\bb R$. If $\theta>0$ (resp. $\theta<0$)  the reservoirs add and fastly remove  (resp. slowly remove) particles in the bulk. According to the value of $\theta$ we prove that the time evolution of the spatial density of particles is described by some reaction-diffusion equations with various boundary conditions.

\end{abstract}

\keywords{Hydrodynamic limit, Reaction-diffusion equation, Boundary conditions, Exclusion with long jumps.} 

\maketitle
 
\section{Introduction}
The exclusion process is an interacting particle system introduced in the mathematical literature during the seventies by Frank Spitzer \cite{Spitzer}. Despite the simplicity of its dynamics it captures the main features of more realistic diffusive systems driven out of equilibrium \cite{Li}, \cite{Liggett2}, \cite{Spohn}. It consists in a collection of continuous-time  random walks evolving on the lattice $\ZZ$ whose dynamics  can be described as follows. A particle at the site $x$ waits an exponential time  after which it jumps to a site $x+y$ with probability  $p(y)$.  If, however, if $x+y$ is already occupied, the jump is suppressed and the clock is reset.

Recently a series of work have been devoted to the study of the nearest-neighbor exclusion process whose dynamics is perturbed by the presence of a slow bond \cite{FGN}, a slow site \cite{FGS},  by slow boundary effects \cite{Adriana} and current boundary effects \cite{dmfp, dmptv,dmptv_2,dmptv_3}. The behavior of the system is then strongly affected and new boundary conditions may be derived at the macroscopic level. On the other hand it is known that the presence of long jumps, in particular heavy tailed long jumps, have a drastic effect on the macroscopic behavior and critical exponents of the system \cite{BGS, Jara1, Jara2}. In this work, we propose to mix these two interesting features by considering the symmetric exclusion process with long jumps in contact with extended reservoirs. The coupling with the reservoirs is regulated by a certain power $\theta$ of a scaling parameter which is the inverse of the size system $N \to\infty$. This question has been addressed in a recent paper \cite{Adriana} in the case of the nearest-neighbor exclusion process for a positive power $\theta$ and with finite reservoirs, in fact one at each end point. Here we consider the case where the jumps probability transition $p(z) \sim |z|^{-1-\gamma}$ has an infinite support and the power $\theta$ has an arbitrary sign, so that the boundary effects can be very strong (fast) or very weak (slow). The model of reservoirs chosen is the same as in \cite{SNU} but other choices are possible and we discuss some of them in Section \ref{subsec:complements}. It would be interesting  to consider the boundary dynamics as in \cite{ dmptv,dmptv_2,dmptv_3}, where particles can be injected (resp. removed) at a fixed rate in an interval close to the right (resp. left) boundary. Then, at the macroscopic level the system should exhibit Robin boundary conditions, which, depending on the range of the interval, could be linear or non-linear, has happens in the nearest-neighbor case.  In this paper we will focus only on the case $\gamma>2$, so that $p(\cdot)$ has a finite variance, postponing the study of the case $\gamma \le 2$ for future works \cite{bgjo}. The form of the reservoirs chosen makes the model a case of the general class of superposition of a dynamics of Glauber type with simple exclusion (see the seminal paper \cite{DMFL} and \cite{BL}, \cite{KMS} for more recent studies) but with a possible singular reaction term due to the long jumps.

The problem we address is to characterize the hydrodynamic behavior of the process described above, i.e., to deduce the macroscopic behavior of the system from the microscopic interaction among particles and to analyze the effect of slowing down or fasting up the interaction with the reservoirs, by increasing or decreasing the value of $\theta$, at the level of the macroscopic profiles of the density. Usually the characterization  of the hydrodynamic limit is formulated in terms of a weak solution of some partial differential equation, called the \textit{hydrodynamic equation}. Depending on the intensity of the coupling with the reservoirs we will observe a phase transition for profiles which are solutions of the hydrodynamic equation which consists on reaction-diffusion equations with different types of boundary conditions, depending on the range of the parameter $\theta$.

We extend the results for the nearest neighbor symmetric simple exclusion process with slow boundaries that was studied in \cite{Adriana} by considering long jumps, infinitely extended reservoirs and also fast reservoirs, i.e. $\theta<0$. In the case $\theta \ge 0$ (slow reservoirs) we recover in our model a similar hydrodynamical behavior to the one obtained in \cite{Adriana}, since we imposed that the probability transition rate to be symmetric and with  finite variance. If one of these conditions is violated then the macroscopic  behavior of the system is different. In the case where we drop the hypothesis that $p(\cdot)$ is symmetric, then there is a drift in the microscopic system which appears at the macroscopic level as the heat equation with a transport term and if drop the finite variance condition, then we expect to have the usual laplacian for the case $p(z) \sim |z|^{-1-\gamma}$ with $\gamma=2$ and a fractional operator  when $\gamma\in(1,2)$, see \cite{bjo}. We leave this difficult problem for a future work since it is important to well understand the ``normal" case first. 

When $\theta$ ranges from $-\infty$ to $+\infty$, the model produces five different macroscopic phases, depending on the value of the parameter $\theta.$ If $\theta \in (2-\gamma,1)$, the boundary interactions are not slowed or fasted enough in order to  change the macroscopic  behavior of the system so that we observe exactly the same behavior as in the case $\theta=0 \in (2-\gamma, 1)$. The hydrodynamic equation in this case is the heat equation with Dirichlet boundary conditions. If $\theta = 1$, the reservoirs are slowed enough that we obtain the heat equation but with Robin boundary conditions. For $\theta \in (1,\infty)$, the reservoirs are sufficiently slowed so that we get the heat equation with Neumann boundary conditions. If $\theta= 2 -\gamma$, the reservoirs are fast enough that we obtain the heat equation with a singular reaction term at the boundaries but with Dirichlet boundary conditions. If $\theta<2-\gamma$, the reservoirs are so fasted that the diffusion part of the motion disappears and that only the reaction term survives at the macroscopic level. The two cases $\theta=1$ and $\theta=2-\gamma$ correspond to a critical behavior connecting macroscopically two different regimes (Dirichlet boundary conditions to Neumann boundary conditions for $\theta=1$  and Reaction to Diffusion equation for $\theta=2-\gamma$). Once the form of the hydrodynamic equation is obtained, it is of interest to study its stationary solution which provides the density profile in the stationary state in the thermodynamic limit. In particular for $\theta \le 2- \gamma$ the density profiles are non linear and have nice properties (see Figure \ref{fig:mesh1}). It would be of interest to go further in the study of the non-equilibrium stationary states of this models.

The paper is organized as follows. In Section \ref{sec:model}  we describe precisely the model  and we state the main result. In Section \ref{sec:hyd_eq} we present the hydrodynamic equations and in Section \ref{sec:HL} we state the \textit{Hydrodynamic Limit}. In Section \ref{subsec:complements} we complement our results in the case of other models of reservoirs.  In order to give an intuition for getting the different boundary conditions, we present in Section \ref{sec:CL} the heuristics for obtaining the  weak solutions of the corresponding partial differential equations. This result is rigorously proved in  Section \ref{sec:Characterization of limit points}. We prove tightness in Section \ref{sec:Tightness}. In Section \ref{sec:RL}, we prove some \textit{Replacement Lemmas} and some auxiliary results. In Section \ref{sec:Energy} we establish some energy estimates which are fundamental to establish uniqueness of the hydrodynamic equations. We added the Appendix \ref{sec:app-unique} in which we prove  the uniqueness of weak solutions of the hydrodynamics equations and the Appendix \ref{sec:compgen} which contains computations involving the generator of the dynamics.

\section{Statement of results}
\subsection{The model}
\label{sec:model}
For $N\geq{2}$ let  $\Lambda_N=\{1, \ldots, N-1\}$ be a finite lattice of size $N-1$ called the bulk.  The exclusion process in contact with reservoirs is a Markov process $\{\eta_t:\,t\geq{0}\}$ with state space $\Omega_N:=\{0,1\}^{\Lambda_N}$. The configurations of the state space $\Omega_N$  are denoted by $\eta$, so that for $x\in\Lambda_N$,  $\eta(x)=0$ means that the site $x$ is vacant while $\eta(x)=1$ means that the site $x$ is occupied.    Now, we explain the dynamics of this model and we start by describing the conditions on the  jump rate.  For that purpose,  let $p:\mathbb{Z}\rightarrow{[0,1]}$ be a translation invariant transition probability 
which is symmetric, that is, for any $z\in\mathbb Z$, $p(z)=p(-z)$ and with finite  variance, that is  $\sigma^{2}:=\sum_{z\in \ZZ}z^{2}p(z)<\infty.$ Note that since $p(\cdot)$ is symmetric it is mean zero, that is:
$\sum_{z\in \ZZ}z p(z)=0.$
We denote $m=\sum_{z\ge 1} z p(z)$. As an example we consider $p(\cdot)$ given by $p(0)=0$ and  $p(z) = 
\dfrac{c_{\gamma}}{\vert z\vert^{\gamma+1}},$ for $z\neq 0$, 
where  $c_{\gamma}$ is a normalizing constant and $\gamma> 2$, so that $p(\cdot)$ has finite variance.  

We consider the process in contact with infinitely many stochastic  reservoirs at all the negative integer sites and at all the integer sites $z \geq N$. We fix four parameters $\alpha, \beta\in(0,1)$, $\kappa>0$ and   $\theta \in \mathbb R$. Particles can get into (resp. exit) the bulk of the system from any site at the left of $0$ at rate  $\alpha\kappa/N^\theta p(z)$   (resp. $(1-\alpha)\kappa/N^\theta p(z)$), where $z$ is the jump size (see Figure \ref{fig:epwlj}).The stochastic reservoir at the right acts in the same way as the left reservoir but in the intensity we replace  $\alpha$ by $\beta$.

The dynamics of the process is defined as follows.  We start with the bulk dynamics. Each pair of sites of the bulk $\{x,y\} \subset \Lambda_N$ carries a Poisson process of intensity one. The Poisson processes associated to different bonds are independent. If for the configuration $\eta$, the clock associated to the bound $\{x,y\}$ rings, then we exchange the values $\eta_x$ and $\eta_y$ with rate $p(y-x) /2$.
Now we explain the dynamics at the boundary. Each pair of sites $\{x,y\}$ with $x\in\Lambda_N$ and $y\in\mathbb Z_-$ carries a Poisson process of intensity one all being independent. If for the configuration $\eta$, the clock associated to the bound $\{x,y\}$ rings, then we change the values $\eta_x$ into $1-\eta_x$ with rate $\frac{\kappa}{N^\theta}p(x-y)\, [(1-\alpha) \eta_x  + \alpha (1-\eta_x) ]$. At the right boundary the dynamics is similar but instead of $\alpha$ the intensity is given by $\beta$. Observe that the reservoirs add and remove particles on all the sites of the bulk $\Lambda_N$, and not only at the boundaries, but with rates which decrease as the distance from the corresponding reservoir increases. We can interpret  the dynamics of the reservoirs in  two different ways as follows. In the  first case, we add to the bulk infinitely many reservoirs at all negative sites and at all sites $y\geq N$. Then particles can get into (resp. get out from) the bulk from  the left reservoir at rate $\alpha \kappa/N^\theta p(z)$ (resp. $(1-\alpha) \kappa/N^\theta p(z)$) where $z$ is the size of the jump. The right reservoir acts in the same way, except that we replace $\alpha$ by $\beta$ in the jump rates given above.
In the second case we can consider that particles can be created (resp. annihilated) at all the  sites $x$ in the bulk with one of the rates $r_N^-(x/N)\alpha \kappa/N^\theta$ or $r_N^+(x/N)\beta \kappa/N^\theta$ (resp. $r_N^-(x/N)(1-\alpha) \kappa/N^\theta$ or $r_N^+(x/N)(1-\beta) \kappa/N^\theta$) where $r_N^{\pm}$ are given in \eqref{eq:mcln}.\\

\begin{center}
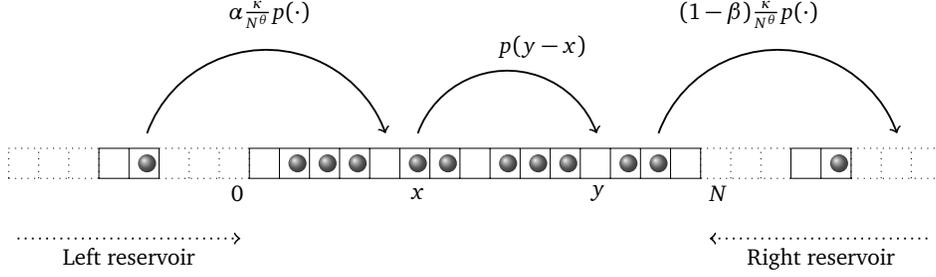
\begin{figure}
\begin{tikzpicture}[scale=0.4]





\draw[step=1cm,black,very thin] (0,0) grid (15,1);

\draw[step=1cm,black, dotted] (15,0) grid (18,1);
\draw[step=1cm,black, very thin] (18,0) grid (20,1);
\draw[step=1cm,black, dotted] (20,0) grid (23,1);

\draw[step=1cm,black, dotted] (-3,0) grid (0,1);
\draw[step=1cm,black, very thin] (-5,0) grid (-3,1);
\draw[step=1cm,black, dotted] (-8,0) grid (-5,1);

\foreach \y in {2,3,4}{
\shade[ball color=gray] (\y-0.4,0.5) circle (.3);
}
\foreach \y in {6,7}{
\shade[ball color=gray] (\y-0.4,0.5) circle (.3);
}
\foreach \y in {9,10,11}{
\shade[ball color=gray] (\y-0.4,0.5) circle (.3);
}
\foreach \y in {13,14}{
\shade[ball color=gray] (\y-0.4,0.5) circle (.3);
}

\shade[ball color=gray] (-3-0.4,0.5) circle (.3);
\shade[ball color=gray] (20-0.4,0.5) circle (.3);

\draw (6-0.4,-0.5) node [color=black] {\small $x$};
\draw (12-0.4,-0.5) node [color=black] {\small{$y$}};
\draw (16-0.4,-0.5) node [color=black] {\small{$N$}};
\draw (0-0.4,-0.5) node [color=black] {\small{$0$}};

\node (LL) at (-8,-2){};
\node (LR) at (0,-2){};
\path[draw,->,color=black, dotted, line width=0.7] (LL) edge node [midway, sloped, below] {\small{Left reservoir}} (LR);

\node (RL) at (15,-2){};
\node (RR) at (23,-2){};
\path[draw,<-,color=black, dotted, line width=0.7] (RL) edge node [midway, sloped, below] {\small{Right reservoir}} (RR);

\draw [->,color=black, line width=0.7] (6-0.4,1.5) arc (160:20:90pt);
\draw (8,3.7) node [color=black, above right] {\small{$p(y-x)$}};

\draw [->,color=black, line width=0.7] (-3-0.4,1.5) arc (160:20:120pt);
\draw (-1,4.7) node [color=black, above right] {\small{$\alpha \tfrac{\kappa}{N^{\theta}} p(\cdot)$}};

\draw [->,color=black, line width=0.7] (14-0.4,1.5) arc (160:20:120pt);
\draw (14,4.7) node [color=black, above right] {\small{$(1-\beta) \tfrac{\kappa}{N^{\theta}} p(\cdot)$}};

\end{tikzpicture}
\caption{Exclusion process with long jumps and infinitely extended reservoirs.}
\label{fig:epwlj}
\end{figure}
\end{center}

The infinitesimal generator of the  process is given by  
\begin{equation}
\label{Generator}
 L_{N} = L_{N}^{0}+ L_{N}^{r}+L_{N}^{\ell},
\end{equation}
where its action on functions  $f:\Omega_N \to \RR$ is
\begin{equation}\label{generators}
\begin{split}
&(L^0_N f)(\eta) =\cfrac{1}{2} \, \sum_{x,y \in \Lambda_N} p(x-y) [ f(\sigma^{x,y}\eta) -f(\eta)],\\
&(L_N^{\ell} f)(\eta) = \frac{\kappa}{N^\theta}\sum_{\substack{x \in \Lambda_N\\ y \le 0}} p(x-y)c_{x}(\eta;\alpha) [f(\sigma^x\eta) - f(\eta)],\\
&(L_N^{r} f)(\eta)= \frac{\kappa}{N^\theta}\sum_{\substack{x \in \Lambda_N \\ y \ge N}} p(x-y) c_{x}(\eta;\beta)  [f(\sigma^x\eta) - f(\eta)],
\end{split}
\end{equation}
 and 
\begin{equation}\label{tranformations}
(\sigma^{x,y}\eta)_z = 
\begin{cases}
\eta_z, \; z \ne x,y,\\
\eta_y, \; z=x,\\
\eta_x, \; z=y
\end{cases}
, \quad (\sigma^x\eta)_z= 
\begin{cases}
\eta_z, \; z \ne x,\\
1-\eta_x, \; z=x.
\end{cases}
\end{equation}
Above, for a function $\vf:[0,1]\rightarrow \RR$, we used the notation 
\begin{equation}\label{rate_c}
c_{x} (\eta;\vf(\cdot)) :=\left[ \eta_x  \left(1-\vf(\tfrac{x}{N}) \right) + (1-\eta_x)\vf(\tfrac{x}{N})\right].
\end{equation}
%

We consider  the Markov process  speeded up in the  time scale $\Theta(N)$ and we use the notation $\eta^{N} (t):= \eta( t \Theta(N))$, so that $(\eta^N (t))_{t\ge 0} $ has  infinitesimal generator $\Theta(N)L_{N}$. Although $\eta ^{N} (t)$ depends on $\alpha$, $\beta$ and $\theta$, we shall omit these index in order to simplify notation.


\subsection{Hydrodynamic equations} \label{sec:hyd_eq}
\label{sec:wsfhe}

From now on up to the rest of this article we fix a finite time horizon $[0,T]$.  To properly  state the hydrodynamic limit, we need to introduce some notations and definitions. We denote by $\langle \cdot,\cdot\rangle _{\mu}$ (resp. $\| \cdot\|_{L^2 (\mu) }$) the inner product (resp. the norm) in $L^{2}([0,1])$ with respect to the measure $\mu$ defined in $[0,1]$ and  when $\mu$ is the Lebesgue measure we simply write $\langle \cdot,\cdot\rangle$ and $\| \cdot\|_{L^2}$ for the corresponding norm. For  an interval $\mc I$ in $\RR$ and integers
$m$ and $n$, we denote by $C^{m,n}([0, T] \times \mc I)$ the set of functions defined on $[0, T] \times \mc I $ that are $m$ times differentiable on the first variable and $n$ times differentiable  on the second variable. An index on a function will always denote a
fixed variable, not a derivative. For example, $G_{s}(q)$ means $G(s, q)$. The derivative of $G \in C^{m,n}([0, T] \times \mc I)$ will be denoted by $\partial _{s}G$ (first variable) and $\partial _{q}G$ (second variable). We shall write $\Delta G$ for $\partial_{q}^{2}G$. We also  consider the set  $C^{m,n}_c ([0,T] \times [0,1])$ of functions $G \in C^{m,n}([0, T] \times[0, 1])$  such that $G_s$ has a compact support included in $(0,1)$ for any time $s$ and, we denote by $C_{c}^{m}(0,1)$ (resp. $C_c^\infty (0,1)$) the set of all $m$ continuously differentiable (resp. smooth) real-valued  functions defined on $(0,1)$ with compact support. The set $C^\infty ([0,1])$ denotes the set of restrictions of smooth functions on $\RR$ to the interval $[0,1]$. The supremum norm is denoted by $\| \cdot \|_{\infty}$.

The semi inner-product $\langle \cdot, \cdot \rangle_{1}$ is defined on the set $C^{\infty} ([0,1])$ by 
\begin{equation}
\langle G, H \rangle_{1} = \int_{0}^1 (\partial_q G)(q) \, (\partial_q H)(q)  \, dq.
\end{equation}  
The corresponding semi-norm is denoted by $\| \cdot \|_{1}$. 

\begin{definition}
\label{Def. Sobolev space}
The Sobolev space $\mathcal{H}^{1}$ on $[0,1]$ is the Hilbert space defined as the completion of $C^\infty ([0,1])$ for the norm 
$$\| \cdot\|_{{\mc H}^1}^2 :=  \| \cdot \|_{L^2}^2  +  \| \cdot \|^2_{1}.$$
Its elements elements coincide a.e. with continuous functions. 
The completion of $C_c^{\infty} (0,1)$ for this norm is denoted by ${\mc H}_0^{1}$. This is a Hilbert space whose elements coincide a.e. with continuous functions vanishing at $0$ and $1$. On ${\mc H}_0^{1}$, the two norms $ \| \cdot \|_{{\mc H}^{1}}$ and  $\| \cdot \|_{1}$ are equivalent.   
The space $L^{2}(0,T;\mathcal{H}^{1})$ is the set of measurable functions $f:[0,T]\rightarrow  \mathcal{H}^{1}$ such that 
$$\int^{T}_{0} \Vert f_{s} \Vert^{2}_{\mathcal{H}^{1}}ds< \infty. $$
The space $L^{2}(0,T;\mathcal{H}_0^{1})$ is defined similarly.
\end{definition}

%

We can now give the definition of the weak solutions of the  hydrodynamic equations that will be derived in this paper.

\begin{definition}
\label{Def. Dirichlet source Condition-g}
Let   $\hat \sigma \ge 0$ and $\hat \kappa \ge 0$ be some parameters. Let $g:[0,1]\rightarrow [0,1]$ be a measurable function. We say that  $\rho:[0,T]\times[0,1] \to [0,1]$ is a weak solution of the reaction-diffusion equation with inhomogeneous Dirichlet boundary conditions
 \begin{equation}\label{eq:Dirichlet source Equation-g}
 \begin{cases}
 &\partial_{t}\rho_{t}(q)=\frac{{\hat \sigma}^2}{2}\Delta\, {\rho} _{t}(q)+ {\hat \kappa} \Big\{ \frac{\alpha-\rho_t(q)}{q^\gamma}+\frac{\beta-\rho_t(q)}{(1-q)^\gamma}\Big\}, \quad (t,q) \in [0,T]\times(0,1),\\
 &{ \rho} _{t}(0)=\alpha, \quad { \rho}_{t}(1)=\beta,\quad t \in [0,T], \\
 &{ \rho}_{0}(\cdot)= g(\cdot),
 \end{cases}
 \end{equation}
if the following three conditions hold:
\begin{enumerate}[1.]
\item $\rho \in L^{2}(0,T;\mathcal{H}^{1})$ if $\hat \sigma >0$ and $\int_0^T \int_0^1 \Big\{ \frac{(\alpha-\rho_t(q))^2}{q^\gamma}+\frac{(\beta-\rho_t(q))^2}{(1-q)^\gamma}\Big\} \, dq\, dt <\infty$ if $\hat \kappa >0$, 
\item $\rho$ satisfies the weak formulation:
\begin{equation}\label{eq:Dirichlet_source_ integral-g}
\begin{split}
&F_{RD}(t, \rho,G,g):=\int_0^1 \rho_{t}(q)  G_{t}(q) \,dq  -\int_0^1 g(q)   G_{0}(q) \,dq \\
&- \int_0^t\int_0^1 \rho_{s}(q)\Big(\dfrac{\hat \sigma^{2}}{2}\Delta + \partial_s\Big) G_{s}(q)  \,ds \, dq\\
&- {\hat \kappa}\int_0^t\int_0^1G_s(q)\left( \frac{\alpha-\rho_s(q)}{q^\gamma}+\frac{\beta-\rho_s(q)}{(1-q)^\gamma}\right)\, ds\,dq=0,
\end{split}   
\end{equation}
for all $t\in [0,T]$ and any function $G \in C_c^{1,2} ([0,T]\times[0,1])$, 

\item if $\hat \sigma >0$ and $\hat\kappa=0$, then $\rho _{t}(0)=\alpha$ and $ {\rho}_{t}(1)=\beta$
for $t$ a.s  in $[0,T]$.
\end{enumerate}
\end{definition}

\begin{rem}
Observe that in the case $\hat \sigma >0$ and $\hat \kappa=0$ we recover the heat equation with Dirichlet inhomogeneous boundary conditions. If $\hat \sigma =0$ the equation does not have a diffusion part and the solution is fully explicit. Despite in the weak formulation we do not require any boundary condition (except the second part of item 1) nor any regularity assumption, it turns out that the (unique) weak solution is smooth and satisfies the boundary conditions of item 3.   
\end{rem}

\begin{rem}\label{use:rem_dir}

Observe that in the case $\hat \sigma >0$ and $\hat \kappa>0$ the item 1 of the previous definition implies that $\rho_t (0)=\alpha $ and $\rho_{t} (1) =\beta$, for almost every $t$ in $[0,T]$. Indeed, first note that by item 1 we know that $\rho_t$  is  $\tfrac{1}{2}$-H\"older for almost every $t$ in $[0,T]$ since a function in $\mc H^1$ is $\tfrac{1}{2}$-H\"older. Now, taking $\ve \in (0,1)$ we note that 
\begin{equation}\label{eq:Rve}
 \int_{0}^{T} \dfrac{(\rho_{t} (0) - \alpha)^{2}}{\gamma-1}dt =\int_{0}^{T}\lim _{\ve\to 0}\ve ^{\gamma-1}\int_{\ve}^{1} \dfrac{(\rho_{t} (0) - \alpha)^{2}}{q^{\gamma}}dq dt. 
\end{equation}
By summing and subtracting $\rho_t (u)$ inside the square in the expression on the right hand side in \eqref{eq:Rve} and using the inequality $(a+b)^2\leq 2a^{2} +2b^{2}$ we get that \eqref{eq:Rve} is bounded from above by
\begin{equation}
\label{eq:Rve1}
\begin{split}
& 2\int_{0}^{T}\lim _{\ve\to 0}\ve ^{\gamma-1}\int_{\ve}^{1} \dfrac{(\rho_{t} (0) - \rho_{t}(q))^{2}}{q^{\gamma}}dq dt 
+2\int_{0}^{T}\lim _{\ve\to 0}\ve ^{\gamma-1}\int_{\ve}^{1} \dfrac{(\rho_{t} (q) - \alpha)^{2}}{q^{\gamma}}dq dt.
\end{split}
\end{equation}
Since $\rho_t$  is  $\tfrac{1}{2}$-H\"older for almost every $t$ in $[0,T]$  the first term  in \eqref{eq:Rve1}  vanishes. Now, the second term in \eqref{eq:Rve1} is bounded from above by
$$2\lim _{\ve\to 0}\ve ^{\gamma-1}\int_{0}^{T}\int_{0}^{1} \dfrac{(\rho_{t} (q) - \alpha)^{2}}{q^{\gamma}}dq dt,$$
which vanishes since we know by the second claim of item 1 that 
$\int_{0}^{T}\int_{0}^{1} \dfrac{(\rho_{t} (q) - \alpha)^{2}}{q^{\gamma}}dq dt <\infty$.
 Thus, we have that 
 $$\int_{0}^{T} \dfrac{(\rho_{t} (0) - \alpha)^{2}}{\gamma-1}dt =0,$$
 whence we get that  $\rho_t (0)=\alpha $ for almost every $t$ in $[0,T]$. Showing that $\rho_t (1)=\beta$ for almost every $t$ in  $[0,T]$ is completely analogous.

\end{rem}

\begin{definition}
\label{Def. Robin Condition-g}
Let $\hat \sigma>0$ and $\hat m\ge 0$ be some parameters.  Let $g:[0,1]\rightarrow [0,1]$ be a measurable function. We say that  $\rho:[0,T]\times[0,1] \to [0,1]$ is a weak solution of the heat equation with Robin boundary conditions 
 \begin{equation}\label{Robin Equation-g}
 \begin{cases}
 &\partial_{t}\rho_{t}(q)= \frac{\hat \sigma^2}{2}\Delta\, {\rho} _{t}(q), \quad (t,q) \in [0,T]\times(0,1),\\
 &\partial_{q}\rho _{t}(0)=\frac{2\hat m}{\hat \sigma^2}(\rho_{t}(0) -\alpha),\quad \partial_{q} \rho_{t}(1)=\tfrac{2\hat m}{\hat \sigma^2}(\beta -\rho_{t}(1)),\quad t \in [0,T] \\
 &{ \rho}_{0}(\cdot)= g(\cdot),
 \end{cases}
 \end{equation}
if the following three conditions hold: 
\begin{enumerate}[1.]
\item $\rho \in L^{2}(0,T;\mathcal{H}^{1})$, 

\item $\rho$ satisfies the weak formulation:
\begin{equation}\label{eq:Robin integral-g}
\begin{split}
&F_{Rob}(t, \rho,G,g):=\int_0^1 \rho_{t}(q)  G_{t}(q) \,dq  -\int_0^1 g(q)   G_{0}(q) \,dq \\
& - \int_0^t\int_0^1 \rho_{s}(q) \Big(\dfrac{\hat \sigma^{2}}{2}\Delta + \partial_s\Big) G_{s}(q)  \,ds\, dq + \dfrac{\hat \sigma^{2}}{2}\int^{t}_{0}   \{\rho_{s}(1) \partial_q G_{s}(1)-\rho_{s}(0)  \partial_q G_{s}(0) \} \, ds\\
& \qquad-\hat m \int^{t}_{0} \{ G_{s}(0)(\alpha -\rho_{s}(0)) +  G_{s}(1)(\beta -\rho_{s}(1)) \}\,  ds=0,
\end{split}   
\end{equation}
for all $t\in [0,T]$, any function $G \in C^{1,2} ([0,T]\times[0,1])$. 
\end{enumerate}
\end{definition}

\begin{rem} \label{neumann_cond_rem}
Observe that in the case $\hat m =0$ the PDE above is the heat equation with Neumann boundary conditions. 
\end{rem}

\subsection{Hydrodynamic Limit}
\label{sec:HL}
%
Let ${\mc M}^+$ be the space of positive measures on $[0,1]$ with total mass bounded by $1$ equipped with the weak topology. For any configuration  $\eta \in \Omega_{N}$ we define the empirical measure $\pi^{N}(\eta,dq)$ on $[0,1]$ by 
\begin{equation}\label{MedEmp}
\pi^{N}(\eta, dq)=\dfrac{1}{N-1}\sum _{x\in \Lambda_{N}}\eta_{x}\delta_{\frac{x}{N}}\left( dq\right),
 \end{equation}
where $\delta_{a}$ is a Dirac mass on $a \in [0,1]$, and
$$\pi^{N}_{t}(\eta, dq):=\pi^{N}(\eta^N(t), dq).$$

Fix $T>0$ and $\theta\in \RR$. We denote by $\PP _{\mu _{N}}$ the probability measure in the Skorohod space $\mathcal D([0,T], \Omega_N)$ induced by the  Markov process $(\eta^N (t))_{t\ge 0}$ and the initial probability measure $\mu_N$ and we denote by $\EE _{\mu _{N}}$ the expectation with respect to $\PP_{\mu _{N}}$.  Let $\lbrace\mathbb{Q}_{N}\rbrace_{N\geq 1}$ be the  sequence of probability measures on $\mathcal D([0,T],\mathcal{M}^{+})$ induced by the  Markov process $\lbrace \pi_{t}^{N}\rbrace_{t\geq 0}$ and by $\mathbb{P}_{\mu_{N}}$.

Let $\rho_0: [0,1]\rightarrow[0,1]$ be a measurable function. We say that a sequence of probability measures $\lbrace\mu_{N}\rbrace_{N\geq 1 }$ in $\Omega_{N}$  is associated to the profile $\rho_{0}(\cdot)$ if for any continuous function $G:[0,1]\rightarrow \mathbb{R}$  and every $\delta > 0$ 
\begin{equation}\label{assoc_mea}
  \lim _{N\to\infty } \mu _{N}\left( \eta \in \Omega_{N} : \left\vert \dfrac{1}{N}\sum_{x \in \Lambda_{N} }G\left(\tfrac{x}{N} \right)\eta_{x} - \int_{0}^1G(q)\rho_{0}(q)dq \right\vert    > \delta \right)= 0.
\end{equation}

The main result of this article is summarized in the following theorem (see Figure \ref{fig:hlf}).

\begin{thm}
\label{th:hl900}
 Let $g:[0,1]\rightarrow[0,1]$ be a measurable function and let $\lbrace\mu _{N}\rbrace_{N\geq 1}$ be a sequence of probability measures in $\Omega_{N}$ associated to $g(\cdot)$. Then, for any $0\leq t \leq T$,
\begin{equation*}\label{limHidreform}
 \lim _{N\to\infty } \PP_{\mu _{N}}\Big( \eta_{\cdot}^{N} \in \mathcal D([0,T], {\Omega_{N}}) : \left\vert \dfrac{1}{N}\sum_{x \in \Lambda_{N} }G\left(\tfrac{x}{N} \right)\eta^N_{x}(t) - \int_{0}^1G(q)\rho_{t}(q)dq \right\vert    > \delta \Big)= 0,
\end{equation*}
where  the time scale is given by
 \begin{equation}\label{time_scales}
\Theta(N)= \begin{cases}
 N^2, &\quad\textrm{if} \,\,\,  \theta\geq 2-\gamma,\\
 N^{\gamma+\theta}, &\quad   \textrm{if} 
\,\,\, \theta<2-\gamma,\\
 \end{cases}
 \end{equation}
and $\rho_{t}(\cdot)$ is the unique weak solution of : 
\begin{itemize}
\item[$\bullet$] \eqref{eq:Dirichlet source Equation-g} with $\hat \sigma=0$ and $\hat \kappa=\kappa c_\gamma \gamma^{-1}$, if $\theta<2-\gamma$;
\item [$\bullet$]  \eqref{eq:Dirichlet source Equation-g} with $\hat \sigma=\sigma$ and $\hat \kappa=\kappa c_\gamma \gamma^{-1}$, if $\theta =2-\gamma$; 
\item[$\bullet$]  \eqref{eq:Dirichlet source Equation-g} with $\hat \sigma=\sigma$ and $\hat \kappa=0$, if $\theta \in (2-\gamma,1)$;
\item[$\bullet$]  (\ref{Robin Equation-g}) with $\hat \sigma=\sigma$ and $\hat m=m\kappa $, if $\theta =1$;
\item[$\bullet$]  (\ref{Robin Equation-g}) with $\hat \sigma=\sigma$ and $\hat m=0$, if $\theta \in (1,\infty)$.
\end{itemize}
\end{thm}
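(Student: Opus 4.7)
The plan is to follow the classical martingale / entropy method for hydrodynamic limits, with the five regimes of $\theta$ treated in a unified fashion at the level of the Dynkin decomposition but distinguished at the level of the boundary/bulk scalings. For any suitably regular test function $G$, I start from the Dynkin martingale
\begin{equation*}
M^N_t(G) \,=\, \langle \pi^N_t, G_t\rangle \,-\, \langle \pi^N_0, G_0\rangle \,-\, \int_0^t \bigl(\partial_s + \Theta(N) L_N\bigr)\langle \pi^N_s, G_s\rangle\, ds,
\end{equation*}
whose quadratic variation is $O(1/N)$ by standard estimates exploiting $\eta_x\in\{0,1\}$ and the finite second moment of $p(\cdot)$, hence vanishes in $L^2(\mathbb{P}_{\mu_N})$ as $N\to\infty$. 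The computation of $\Theta(N) L_N\langle \pi^N_s,G_s\rangle$, carried out in Appendix~\ref{sec:compgen}, splits into a bulk part coming from $L^0_N$, which after two discrete integrations by parts that use the symmetry and finite variance of $p(\cdot)$ contributes $\tfrac{\sigma^2}{2}\langle \pi^N_s,\Delta G_s\rangle + o(1)$ when $\Theta(N)=N^2$ and vanishes when $\Theta(N)=N^{\gamma+\theta}$ with $\theta<2-\gamma$, and a boundary part coming from $L^\ell_N+L^r_N$, which after the identification of the rates $r_N^\pm(x/N)$ in \eqref{eq:mcln} produces an expression whose asymptotics change across the thresholds $\theta=1$ and $\theta=2-\gamma$.

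The second step is to establish tightness of the sequence $\{\mathbb{Q}_N\}_{N\ge 1}$ on $\mathcal{D}([0,T],\mathcal{M}^+)$, done in Section~\ref{sec:Tightness} through the Aldous--Rebolledo criterion applied to $\{\langle \pi^N_\cdot,G\rangle\}$ for $G$ in an appropriate class ($C_c^\infty(0,1)$ in the fast-reservoir regimes, $C^2([0,1])$ in the slow ones), using the Dynkin decomposition and the bounds obtained in step one. Combined with the limit of the integral term and the replacement lemmas of Section~\ref{sec:RL} (which allow substituting microscopic occupation variables $\eta_x$ by block averages converging to $\rho_s(x/N)$, the key ingredient to pass from the microscopic boundary term $\kappa N^{-\theta}\sum_x p(x-y)(\alpha-\eta_x)$ to the continuum source), this identifies every limit point $\mathbb{Q}^*$ as being supported on trajectories satisfying the appropriate weak formulation. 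Matching the scalings yields exactly the five cases of the theorem: the pre-factor $\tfrac{\sigma^2}{2}$ for the Laplacian when $\theta\ge 2-\gamma$, the singular reaction $\kappa c_\gamma\gamma^{-1}\{(\alpha-\rho_s)/q^\gamma+(\beta-\rho_s)/(1-q)^\gamma\}$ when $\theta\le 2-\gamma$, the constant $m\kappa$ in the Robin trace when $\theta=1$ (arising from a partial summation involving $\sum_{z\ge 1}z\, p(z)$), and vanishing contributions in the respective remaining ranges.

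The third step is to recover the boundary conditions at the level of the limit point. For $\theta\in(2-\gamma,1)$ the test functions in \eqref{eq:Dirichlet_source_ integral-g} vanish near $\{0,1\}$, so the Dirichlet conditions $\rho_t(0)=\alpha$ and $\rho_t(1)=\beta$ do not come from the weak formulation directly but must be obtained from the energy estimate of Section~\ref{sec:Energy}, which places $\rho$ in $L^2(0,T;\mathcal{H}^1)$; then a separate argument using non-compactly supported test functions shows that the boundary trace must equal $\alpha$ and $\beta$ (the mechanism is the same as in Remark~\ref{use:rem_dir}, adapted to the purely diffusive case by comparing the boundary term produced by $L^\ell_N+L^r_N$ against $\alpha-\rho_t$ via the replacement lemmas). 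For $\theta=1$ and $\theta>1$ the weak formulation \eqref{eq:Robin integral-g} is tested against $G\in C^{1,2}([0,T]\times[0,1])$, and the boundary trace inherits the Robin/Neumann condition directly from the surviving boundary generator term. For $\theta\le 2-\gamma$ the integrability of the singular reaction on $[0,1]$ enforces the Dirichlet traces via exactly the computation of Remark~\ref{use:rem_dir}. Finally, uniqueness of weak solutions in each of the five regimes, proved in Appendix~\ref{sec:app-unique}, upgrades the subsequential convergence to convergence of the whole sequence, completing the proof.

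The main obstacle, as I see it, is twofold and sits in the fast-reservoir regimes $\theta\le 2-\gamma$. First, the replacement lemmas near the boundary are delicate: the dominant contribution to the Dirichlet form comes from the reservoir dynamics on sites close to $\{1,N-1\}$, whose strength $\kappa N^{-\theta}\sum_{y\le 0} p(x-y)$ diverges when $x/N\to 0$; the standard one-block/two-block scheme must be refined to treat boundary-adjacent strips separately from interior ones, and the reference measure used in the entropy inequality has to be chosen as a slowly-varying product Bernoulli with profile adapted to $(\alpha,\beta)$. Second, in the critical case $\theta=2-\gamma$ the diffusive and the reactive parts live on the same time scale $N^2$ and must be controlled simultaneously, which requires a careful energy estimate combining $\|\rho\|_{\mathcal{H}^1}$ with the weighted $L^2$ bound $\int\int (\alpha-\rho)^2 q^{-\gamma}\,dq\,dt<\infty$ appearing in Definition~\ref{Def. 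Dirichlet source Condition-g}.
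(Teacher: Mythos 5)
Your proposal is correct and follows essentially the same route as the paper: Dynkin martingales with vanishing quadratic variation, tightness in the two regimes of test functions, entropy/Feynman--Kac replacement lemmas with slowly-varying Bernoulli reference measures, energy estimates giving $L^2(0,T;\mathcal{H}^1)$ regularity (plus the weighted $L^2$ bound for $\theta\le 2-\gamma$), characterization of limit points, and uniqueness of weak solutions to conclude. The only minor imprecision is that for $\theta\in(2-\gamma,1)$ the paper fixes the Dirichlet traces not through non-compactly supported test functions but through a direct replacement lemma at the boundary sites (Lemmas \ref{Rep-Dirichlet2} and \ref{repla_novo}, transferred to $\bb Q$ by Portmanteau), which is nevertheless the mechanism you describe in substance.
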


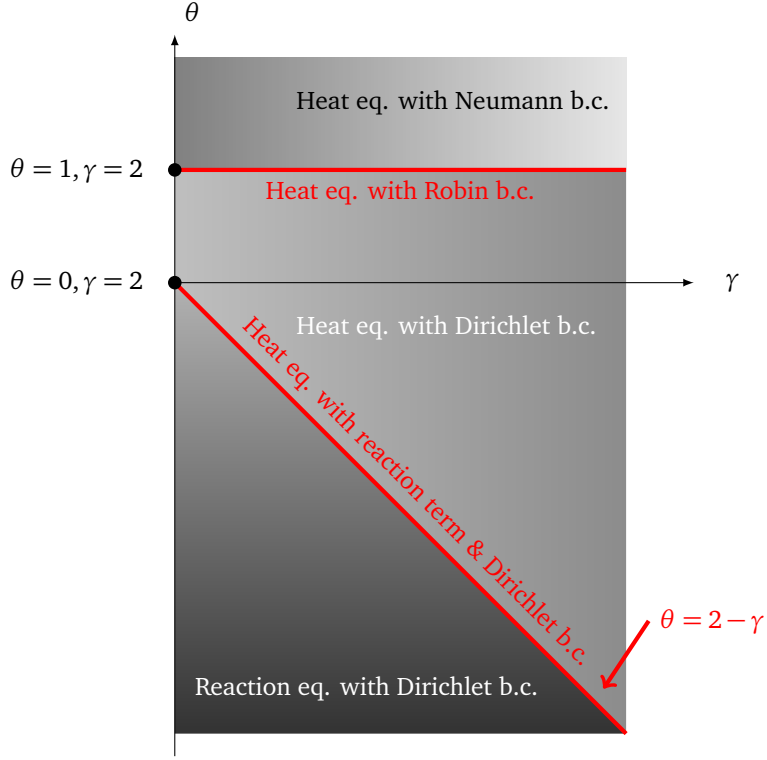
\begin{figure}

\begin{center}
\begin{tikzpicture}[scale=0.3]
\shade[right color=Gray!20] (0,5) -- (0,10) --(20,10)--(20,5)-- cycle;
\shade[right color=Gray!90, left color=Gray!50] (0,5) -- (20,5) --(20,-20)--(0,0)-- cycle;
\shade[top color=black!30, bottom color=black!80] (0,0) -- (20,-20) --(0,-20)-- cycle;
\draw (0,12) node[right]{$\theta$};
\draw (24,0) node[right]{$\gamma$};
\draw (-1,0) node[left]{$\theta=0, \gamma=2$};
\draw (-1,5) node[left]{$\theta=1, \gamma=2$};
\draw[->,>=latex] (0,0) -- (23,0);
\draw[->,>=latex] (0,-21) -- (0,11);
\draw[-,=latex,red,ultra thick] (0,0) -- (20, -20) node[midway, above, sloped] {{Heat eq. with reaction term \& Dirichlet b.c. }};
\draw[-,=latex,red,ultra thick] (0,5) -- (20, 5) node[midway, sloped, below] {{Heat eq. with Robin b.c.}};
\node[right, black] at (5,8) {Heat eq. with Neumann b.c.} ;
\node[right, white] at (5,-2) {Heat eq. with Dirichlet b.c.} ;
\node[right, white] at (0.5,-18) {Reaction eq. with Dirichlet b.c.} ;
\fill[black] (0,0) circle (0.3cm);
\fill[black] (0,5) circle (0.3cm);
\draw[<-,red,ultra thick] (19,-18) -- (21, -15) node[right] {$\theta=2-\gamma$};
\end{tikzpicture}
\end{center}
\caption{The five different hydrodynamic regimes in terms of $\gamma$ and $\theta$.}
\label{fig:hlf}
\end{figure}

It is not always possible to write fully explicit expressions for the solutions of these hydrodynamic equations. The form of the corresponding stationary solutions is of interest since the latter are expected to describe, in general, the mean density profile in the non-equilibrium stationary state of the microscopic system in the thermodynamic limit $N \to \infty$. Observe that this is not a trivial fact since it requires to exchange the limits $t \to \infty$ with $N \to \infty$ (and for $\theta>1$ this is for example false, see below). 

The stationary solutions of the hydrodynamic limits in the $\theta>2-\gamma$ case are standard. On the other hand, the form and properties of the stationary solutions in the $\theta \le 2-\gamma$ case are original and more tricky to obtain in the $\theta= 2-\gamma$ case. This problem is studied in more details in \cite{JO-V}. Here we only present some graphs of the stationary solutions and refer the interested reader to \cite{JO-V} for a complete mathematical treatment. 

\begin{figure}[h]
    \centering
    \includegraphics[width=1\textwidth]{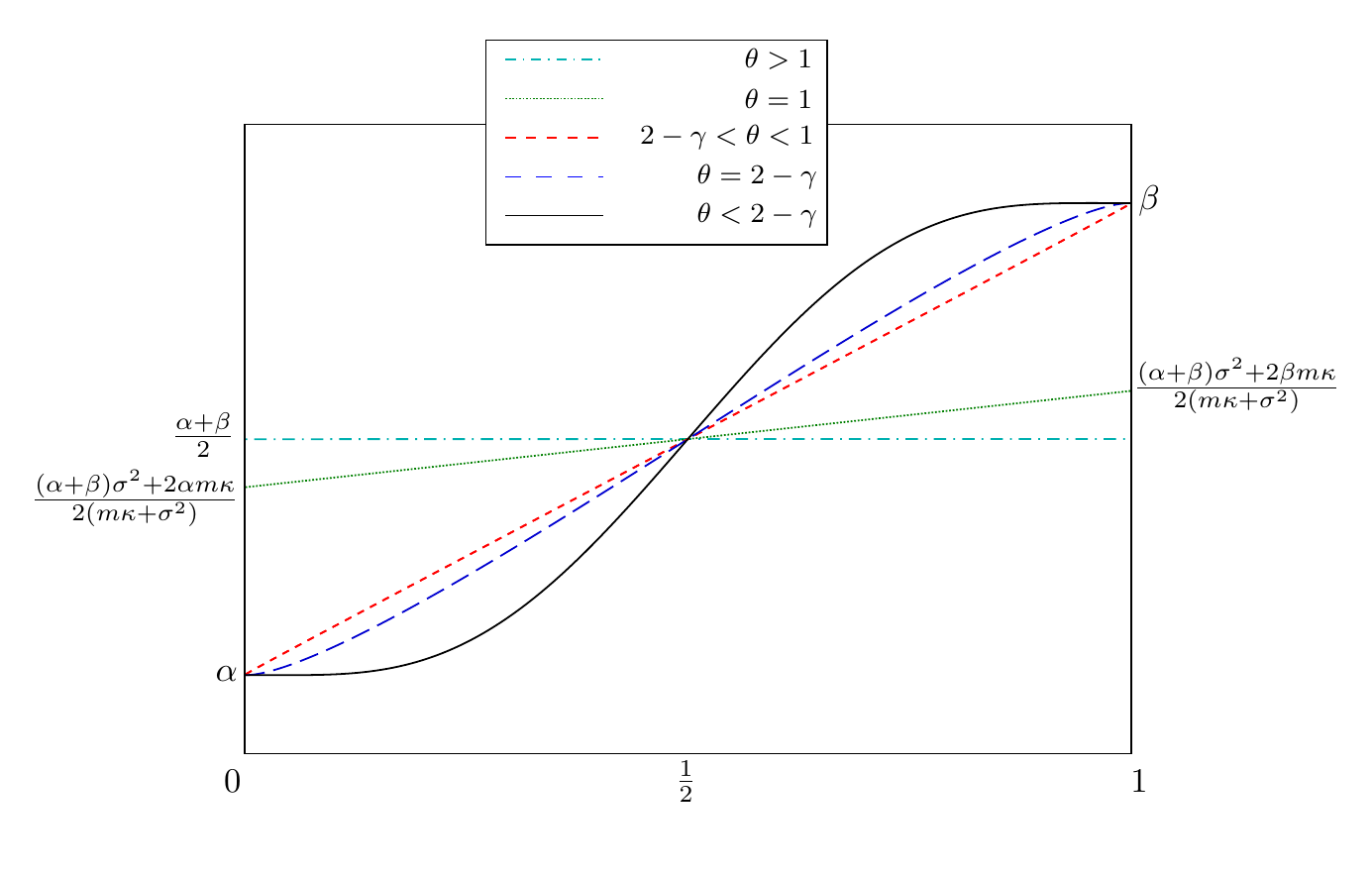}
    \caption{Profiles of the stationary solution of the hydrodynamic equations according to the value of $\theta$.   }
    \label{fig:mesh1}
\end{figure}

For $\theta \in (2-\gamma, 1)$ (heat equation with Dirichlet boundary conditions) the stationary solution is the linear profile connecting $\alpha$ at $0$ to $\beta$ at $1$. For $\theta=1$ (heat equation with Robin boundary conditions) the profile is still linear but the values at the boundaries are different. Observe that if $\kappa \to 0$ these values converge to $\tfrac{\alpha + \beta}{2}$ so that the profile becomes flat equal to $\tfrac{\alpha + \beta}{2}$. For $\theta >1$ (heat equation with Neumann boundary conditions) the stationary solution is constant equal to $\int_0^1 g(q) dq$ where $g(\cdot)$ is the initial condition. In fact, for $\theta>1$, we expect that if we compute directly the stationary profile in the non-equilibrium stationary state of the microscopic system in the thermodynamic limit, the stationary profile will be flat with the value $(\alpha+\beta)/2$. This value is therefore memorized in the form of the hydrodynamic limits for $\theta=1$, despite the fact that it has been forgotten in the hydrodynamic limits for $\theta>1$. In the case $\theta < 2- \gamma$ (reaction equation) the stationary profile is fully explicit and given by $ \tfrac{V_0(q)}{V_1(q)}$  where 
\begin{equation}
\label{V_0_and_V_1}
V_0 (q) = \alpha q^{-\gamma} + \beta (1-q)^{-\gamma}, \quad V_1 (q) =q^{-\gamma} + (1-q)^{-\gamma}.
\end{equation}
Observe that this profile is increasing, non-linear, convex on $(0,1/2)$ and concave on $(1/2, 1)$ and connects $\alpha$ at $0$ to $\beta$ at $1$. At the boundaries the profile is very flat. In \cite{JO-V} it is proved that these properties remain valid for the stationary solution of the hydrodynamic equation in the $\theta=2-\gamma$ case. \\


\subsection{Complementary results}
\label{subsec:complements}

In order to limit the length of the paper we decided to consider in details only one kind of reservoirs. However, since a reservoir model is not universal, other natural models are of interest and in this subsection we explain, without proofs, how our results have to be modified in these contexts. We will discuss three cases:
\begin{enumerate}[{\bf{Case}} 1:\, ]
\item The reservoir consists on the left (resp. on the right) of a single Glauber dynamics whose action of the generator on a function $f:\Omega_N \to \RR$ is
\begin{equation*}
\begin{split}
&(L_N^{\ell} f)(\eta) = \frac{\kappa}{N^\theta} \sum_{x \in \Lambda_N} c_{x}(\eta;\alpha) p(x) [f(\sigma^x\eta) - f(\eta)],\\
&\Big( \quad \text{resp. \;}  (L_N^{r} f)(\eta)= \frac{\kappa}{N^\theta} \sum_{x \in \Lambda_N} c_{x} (\eta;\beta) p(N-x)  [f(\sigma^{N-1}\eta) - f(\eta)] \quad\Big).
\end{split}
\end{equation*}
Thus it creates a particle at the  site $x \in \Lambda_N$ with rate $\tfrac{\kappa}{N^\theta} \alpha p(x) $ (resp.$\tfrac{\kappa}{N^\theta} \beta p(N-x)$) if the site $x$ is empty and it removes a particle at the  site $x$ with rate $\tfrac{\kappa}{N^\theta} (1-\alpha) p(x) $ (resp.$\tfrac{\kappa}{N^\theta}(1-\beta) p(N-x)$) if the site $x$ is occupied. The bulk dynamics is unmodified.
\item The reservoir consists on the left (resp. on the right) of a single Glauber dynamics whose action of the generator on a function $f:\Omega_N \to \RR$ is
\begin{equation*}
\begin{split}
&(L_N^{\ell} f)(\eta) = \frac{\kappa}{N^\theta} c_{1}(\eta;\alpha) [f(\sigma^1\eta) - f(\eta)],\\
&\Big( \quad \text{resp. \;}  (L_N^{r} f)(\eta)= \frac{\kappa}{N^\theta} c_{N-1}(\eta;\beta)  [f(\sigma^{N-1}\eta) - f(\eta)] \quad\Big).
\end{split}
\end{equation*}
Thus it creates a particle at the site $1$ with rate $\tfrac{\kappa}{N^\theta} \alpha$ (resp.$\tfrac{\kappa}{N^\theta} \beta$) if the site $1$ (resp. $N-1$) is empty and it removes a particle at the  site $1$ with rate $\tfrac{\kappa}{N^\theta} (1-\alpha)$ (resp.$\tfrac{\kappa}{N^\theta}(1-\beta)$) if the site $1$ (resp. $N-1$) is occupied. The bulk dynamics is unmodified.
\item The reservoir consists on the left (resp. on the right) of an infinite number of Glauber dynamics whose action of the generator on a local function $f:\{0,1\}^\ZZ \to \RR$ is
\begin{equation*}
\begin{split}
&(L_N^{\ell} f)(\eta) = \frac{\kappa}{N^\theta}\sum_{x \le 0} c_{x}(\eta;\alpha) [f(\sigma^x\eta) - f(\eta)],\\
&\Big( \quad \text{resp. \;}  (L_N^{r} f)(\eta)= \frac{\kappa}{N^\theta}\sum_{x\ge N}  c_{x}(\eta;\beta)  [f(\sigma^x\eta) - f(\eta)] \quad\Big).
\end{split}
\end{equation*}
Thus it creates a particle at the site $x\le 0$ (resp. $x\ge N$) with rate $\tfrac{\kappa}{N^\theta} \alpha$ (resp.$\tfrac{\kappa}{N^\theta} \beta$) if the site $x$ is empty and it removes a particle at the  site $x \le 0$ (resp. $x\ge N$) with rate $\tfrac{\kappa}{N^\theta} (1-\alpha)$ (resp.$\tfrac{\kappa}{N^\theta}(1-\beta)$) if the site $x$ is occupied. 
Moreover in this case we assume that the long jumps are not restricted to sites $x,y \in \Lambda_N$ but may occur in all the lattice $\ZZ$, i.e. the action of the bulk dynamics generator on a local function $f:\{0,1\}^\ZZ \to \RR$ is now described by 
\begin{equation*}
(L^0 f)(\eta) =\cfrac{1}{2} \, \sum_{x,y \in \ZZ} p(x-y) [ f(\sigma^{x,y}\eta) -f(\eta)].
\end{equation*}
\end{enumerate} 

In the two first cases, the density profile will be described by a function $\rho_t (q)$ where $q\in [0,1]$ while in the third case it will be described by a function $\rho_t (q)$, $q \in \RR$, since the system evolves on $\ZZ$. \\

\begin{enumerate}[{\bf{Case}} 1:\, ]
\item We have still five different regimes. The changes with respect to our results are:
\begin{enumerate}[a)]
\item the value of $\theta$ for which we obtain the reaction-diffusion equation (now is $\theta=\gamma-1$ instead of $\theta=2-\gamma$) and the reaction equation (now is for $\theta<\gamma-1$ instead of $\theta<2-\gamma$); 
\item the functions $V_1$ and $V_0$ are the same as before but the exponent in this case is $1+\gamma$ instead of $\gamma$. We note that all the other regimes are not affected.
\end{enumerate}
\item We have now only three different regimes which occur all in the diffusive time scale. If $\theta>1$ the macroscopic behavior is described by the heat equation with Neumann boundary conditions; if $\theta=1$, it is described by the heat equation with Robin boundary condition; if $\theta<1$ (positive or negative) it is described by the heat equation with Dirichlet boundary conditions.  
\item We have now only three different regimes. 
\begin{enumerate}[a)]
\item If $\theta>2$ the reservoirs are too weak and the density profile evolves in the diffusive scaling according to the heat equation on $\RR$ 
$$\partial \rho_t (q)  = \tfrac{\sigma^2}{2} \Delta \rho_t (q) ,$$
without any boundary conditions.
\item If $\theta=2$, the density profile evolves in the diffusive scaling according to the reaction-diffusion equation on $\RR$
$$\partial \rho_t (q) = \tfrac{\sigma^2}{2} \Delta \rho_t (q) - \kappa {\bf 1}_{q\le 0} (\rho_t (q) -\alpha) -\kappa {\bf 1}_{q\ge 1} (\rho_t (q) -\beta).$$
\item If $\theta>2$, the reservoirs are so fast that in the diffusive time scale they fix the density profile to be $\alpha$ at the left of $0$ and $\beta$ at the right of $1$. In the bulk $(0,1)$, the density profile evolves according to the heat equation restricted to $(0,1)$ with these inhomogeneous Dirichlet boundary conditions. 
\end{enumerate}
\end{enumerate}

\vspace{1cm}

{\bf Notations:} We write $f(x) \lesssim g(x)$ if there exists a constant $C$ independent of $x$ such that $f(x) \le C g(x)$ for every $x$. We will also write $f(x) = {\mc O} (g(x) )$ if the condition $|f (x) | \lesssim |g(x) |$ is satisfied. Sometimes, in order to stress the dependence of a constant $C$ on some parameter $a$, we write $C(a)$.

\section{Heuristics for the Hydrodynamic equations} \label{sec:CL}

In this section we give the main ideas which are behind the identification of limit points as weak solutions of the partial differential equations given in  Section \ref{sec:hyd_eq}. In Section \ref{sec:Tightness}, we show that the sequence $\{ \mathbb Q_N \}_{N\ge 1}$ is tight and in Section \ref{sec:Characterization of limit points} we prove that all limiting points of  the sequence $\lbrace\mathbb{Q}_{N}\rbrace_{N\geq 1}$ are concentrated on trajectories of  measures that are  absolutely continuous with respect to the Lebesgue measure, that is $\pi_t(dq)=\rho_t(q)dq$. Now we argue that the density $\rho_t(q)$ is a weak solution of the corresponding hydrodynamic equation for each regime of $\theta$. The precise proof of this result is given ahead in Proposition  \ref{prop:weak_sol_car}.

The identification of the density $\rho_t(q)$ as a weak solution of the hydrodynamic equation is obtained by using auxiliary martingales. For that purpose, and to make the exposition simpler, we  fix a function $G:[0,1]\to\bb R$ which does not depend on time and  is two times continuously differentiable. If $\theta <1$ we will assume further that it has a compact support included in $(0,1)$ and for $\theta\geq 1$ we assume that it has a compact support but not necessarily  contained in $[0,1]$ so that $G$ has a good decay at infinity. In the last case  observe that $G$ can take non-zero values at $0$ and $1$.
We know by Dynkin's formula  that
\begin{equation}\label{Dynkin'sFormula}
M_{t}^{N}(G)= \langle \pi_{t}^{N},G\rangle -\langle \pi_{0}^{N},G\rangle-\int_{0}^{t}\Theta(N)L_{N}\langle \pi_{s}^{N},G\rangle \, ds,
\end{equation}
is a martingale with respect to the natural filtration  $\{\mathcal{F}_{t}\}_{ t\ge 0}$, where for each $t\ge 0$, $\mathcal{F}_t:=\sigma(\eta(s): s < t)$. Above  the notation $\left\langle \pi_{s}^{N},G\right\rangle$ represents  the integral of $G$ with respect the measure $ \pi_{s}^{N}$. This notation should not  be mistaken  with the notation used for the inner product in $L^{2}([0,1])$.
A simple computation, based on \eqref{T6} and the discussion after this equation, shows that $\mathbb{E}_{\mu_N} \Big[ \big(M_{t}^{N}(G)\big)^2\Big]$ vanishes as $N\to\infty$. Now we look at the integral term in \eqref{Dynkin'sFormula}. A simple computation shows that 
\begin{equation}\label{genaction}
\begin{split}
\int_0^t \Theta(N) L_N ( \langle \pi^N_{s}, G \rangle ) \, ds =& \cfrac{\Theta(N)}{N-1} \int_0^t  \sum_{x\in \Lambda_N}  \mc L_NG(\tfrac{x}{N}) \eta^N_x(s) \, ds \\+& \cfrac{ \kappa \Theta(N)}{(N-1)N^\theta} \int_0^t \sum_{x \in \Lambda_N}  (G r_{N}^{-})(\tfrac{x}{N}) (\alpha-\eta^N_x(s)) \, ds \\+& \cfrac{\kappa \Theta(N)}{(N-1)N^\theta} \int_0^t  \sum_{x \in \Lambda_N}  (G r_{N}^{+})(\tfrac{x}{N}){(\beta-\eta^N_x(s)} \, ds,
\end{split}
\end{equation}
where for all $x\in \Lambda_N$
\begin{equation}
\label{eq:mcln}
\begin{split}
 ({\mc L}_N G) (\tfrac{x}{N}) = \sum_{y \in \Lambda_N} p(y-x) \left[ G(\tfrac{y}{N}) -G(\tfrac{x}{N})\right],\\
r_N^- (\tfrac{x}{N})= \sum_{y \ge x} p(y), \quad r_N^+ (\tfrac{x}{N})= \sum_{y \le x-N} p(y).
\end{split}
\end{equation}   

Now, we want to extend the first sum in \eqref{genaction} to all the integers. For that purpose we extend the function $G$ to $\RR$ in such a way that it remains two times continuously differentiable. By the definition of $\mc L_{N}$, we get that
\begin{equation}\label{bulkaction}
\begin{split}
\cfrac{\Theta(N)}{N-1} \int_0^t \sum_{x\in \Lambda_N}  \mc L_NG(\tfrac{x}{N}) \eta_x^N (s) \, ds
 =&\cfrac{\Theta(N)}{N-1} \int_0^t \sum_{x\in \Lambda_N} (K_N G) (\tfrac{x}{N})\eta_x^N (s) \, ds  \\
 -& \cfrac{\Theta(N)}{N-1}\int_0^t  \sum_{x\in \Lambda_N}\sum_{y\leq 0}\left[G(\tfrac{y}{N})- G(\tfrac{x}{N})\right] p(x-y)\eta_x^N (s) \, ds \\
-&\cfrac{\Theta(N)}{N-1} \int_0^t  \sum_{x\in \Lambda_N}\sum_{y\geq N}\left[G(\tfrac{y}{N})- G(\tfrac{x}{N})\right] p(x-y)\eta_x^N (s) \, ds,
\end{split}
\end{equation}
where 
\begin{equation}
\label{eq:kappan}
 ({K}_N G) (\tfrac{x}{N}) = \sum_{y \in\mathbb Z} p(y-x) \left[ G(\tfrac{y}{N}) -G(\tfrac{x}{N})\right].
\end{equation}

Now, we are going to analyze all the  terms in \eqref{bulkaction} and the boundary terms in \eqref{genaction} for the different regimes of $\theta.$ Thus, we will be able to see how the different boundary conditions appear on the hydrodynamic equations given in Section \ref{sec:hyd_eq} from the underlying particle system.

Let us first observe that, for any $a \in (0,1)$, uniformly in $u \in (a,1-a)$, we have that
\begin{equation}
\label{eq:rs-r+-}
N^\gamma r_N^- ( [uN]) \to c_\gamma \gamma^{-1} u^{-\gamma}:=r^- (u), \quad N^\gamma r_N^+ ( [uN]) \to c_\gamma \gamma^{-1} (1-u)^{-\gamma}:=r^+ (u)
\end{equation}
as $N\to \infty$.

\subsection{The case $\theta <2-\gamma$} \label{sec:CLPDC0}

In this regime we take initially a function $G: (0,1) \to\bb R$ two times continuously differentiable  and with compact support  in $(0,1)$ (so that we can choose an extension by $0$ outside of $(0,1)$). 

Now we start by analyzing the first term on the right hand side of \eqref{bulkaction}. Since $\Theta(N)=N^{\gamma+\theta}$, a simple computation, shows that the first term on the right hand side of \eqref{bulkaction}  vanishes for $\theta<2-\gamma$. Indeed, by a Taylor expansion on $G$ and the fact that $p(\cdot)$ is mean zero, we have that
\begin{eqnarray*}
&& N^{\gamma+\theta}\sum_{y\in \ZZ}(G(\tfrac{y+x}{N})-G(\tfrac{x}{N}))p(y)
\end{eqnarray*}
is of same order as
$$N^{\gamma+\theta-2}G''(\tfrac{x}{N})\sum_{y\in \ZZ}y^2p(y)$$
and since $\theta<2-\gamma$ last expression vanishes as $N\to\infty. $

Moreover, a simple computation shows that  the second and third terms on the right hand side of \eqref{bulkaction} vanish as $N\to\infty$, since $\Theta(N)=N^{\gamma+\theta}$ and $\theta<2-\gamma$. Indeed we can bound from above, for example the second  term in \eqref{bulkaction} by $t N^\theta$ times 
$$\cfrac{1}{N-1} \sum_{x \in \Lambda_N} N^\gamma r_N^- (\tfrac{x}{N})\, |G(\tfrac{x}{N})|$$ 
because $G$ vanishes outside $(0,1)$ and $|\eta_x^N (s)| \le 1$ for all $s>0$. Since $\theta<0$ and that the previous sum converges to the (finite) integral of $|G| r^-$ on $(0,1)$, by (\ref{eq:rs-r+-}), the previous display vanishes as $N\to\infty$. Now we look at the boundary terms in \eqref{genaction}. The second term on the right hand side of \eqref{genaction} can be written, for the choice of $\Theta(N)=N^{\gamma+\theta}$, as:
\begin{equation*}
\frac{\kappa N^\gamma}{N-1} \int_0^t \sum_{x\in\Lambda_N}G \big(\tfrac{x}{N} \big ) r_N^-(\tfrac{x}{N})(\alpha-\eta^N_x(s)) \, ds
\end{equation*}
which can be replaced, thanks to (\ref{eq:rs-r+-}) and the fact that $G$ has compact support, by
\begin{equation*}
\kappa \, \int_0^t \langle \alpha-\pi_s^N, G r^- \rangle \, ds  \to \kappa \int_0^t \int_0^1 G(q) r^- (q) (\alpha- \rho_s (q) {)}  dq \, ds
\end{equation*}
as $N \to \infty$. The last convergence holds because $G$ has a compact support included in $(0,1)$ so that $G r^-$ is a continuous function. For the remaining term we can perform exactly the same analysis.

\subsection{The case $\theta =2-\gamma$} \label{sec:CLPDC0} 

In this case, and as above, we take initially a function $G:(0,1) \to\bb R$ two times continuously differentiable and with compact support  in $(0,1)$ (so that we can choose a two times continuously differentiable extension which is $0$ outside of $(0,1)$). 
In this case, since $\Theta(N)=N^2$, by Lemma \ref{convergence laplacian},
 which we prove below, the first term on the right hand side of \eqref{bulkaction} can be replaced, for $N$ sufficiently big, by 
 \begin{eqnarray*}
\cfrac{1}{N-1} \int_0^t \sum_{x\in \Lambda_N}\cfrac{\sigma^2}{2} \Delta G(\tfrac{x}{N}) \, \eta^N_x(s) \, ds. 
\end{eqnarray*}
Moreover, a  computation similar to the one above shows that  the second and third terms on the right hand side of \eqref{bulkaction} vanish as $N\to\infty$ (recall that $\Theta(N)=N^{2}$ and $\gamma>2$). Finally, the first term on the right hand side of \eqref{genaction} can be rewritten as 
\begin{equation*}\label{gen_action}
\cfrac{ \kappa N^{\gamma}}{(N-1)} \int_0^t \sum_{x \in \Lambda_N}  (G r_{N}^{-})(\tfrac{x}{N})\,  (\alpha-\eta^N_x(s)) \, ds
\end{equation*}
which can be replaced, thanks to (\ref{eq:rs-r+-}) and the fact that $G$ has compact support, by
\begin{equation*}
\kappa \, \int_0^t \langle {\alpha-}  \pi_s^N, G r^- \rangle \, ds \to \kappa \int_0^t \int_0^1 G(q) r^- (q) {(\alpha-} \rho_s (q){)}  dq \, ds
\end{equation*}
as $N \to \infty$ because $G r^-$ is a continuous function. The same computation can be done for the remaining term. 


\subsection{The case $\theta \in (2-\gamma,1)$} \label{sec:CLPDC} 

In this case we take again  a function $G: (0,1) \to\bb R$ two times continuously differentiable and with compact support  in $(0,1)$ and extend it by $0$ outside of $(0,1)$. As above, we can easily show that  the last two terms on the right hand side of \eqref{genaction} vanish as $N\to\infty$, since we can transform each one of them it into $N^{2+\gamma-\theta}$ times a converging integral, which  vanishes since $\theta>2-\gamma$. Analogously, the second and third terms on the right hand side of \eqref{bulkaction} also vanish because, for example, the second term on the right hand side of \eqref{bulkaction}
 \begin{equation*}
 \cfrac{N^{2}}{N-1} \int_0^t \sum_{x\in \Lambda_N}G(\tfrac{x}{N}) r_N^-(\tfrac{x}{N})\eta^N_x (s) \, ds
\end{equation*}
can be bounded from above by a constant times $t N^{2-\gamma}$ times a sum converging to the integral of $| G | r^-$ on $(0,1)$.  The estimate of the third term is analogous. Therefore  since $\gamma>2$, both vanish as $N\to\infty$. \\

\begin{rem}
Observe that in the three previous cases, we imposed to  $G$ to have a compact support included in $(0,1)$. This was used in order to extend smoothly the function $G$ by $0$ outside of $(0,1)$ (the condition $G(0)=G(1)=0$ would not have been sufficient) and this was fundamental to ensure that the functions $G r^-$, $G r^+$ do not have singularities at the boundaries. On the other hand, in the two next cases, it will be fundamental to consider test functions $G:[0,1] \to \RR$ which are not necessarily $0$ at the boundaries in order to ``see" the boundaries in the weak formulation. 
\end{rem}

\subsection{The case $\theta=1$}

In this case we consider an arbitrary function $G:[0,1] \to \RR$ which is two times continuously differentiable and we extend it on $\RR$ in a two times continuously differentiable function with compact support. Its support strictly (a priori) contains $[0,1]$ since $G$ can take non-zero values at $0$ and $1$. We start by looking  at the terms coming from the boundary, namely the two  last terms on the right hand side of \eqref{genaction}. Then, in the second term on the right hand side of \eqref{genaction} (resp. the third term) we perform at first a Taylor expansion on $G$ and then we  replace $\eta_x$ by the average $\overrightarrow{\eta}_{0}^{\ve N}$ (resp.  $\eta_x$ by $\overleftarrow{\eta}_{N}^{\ve N}$) defined in \eqref{boxes}, which can be done as a consequence of Lemma \ref{Rep-Neumann} as pointed out in Remark \ref{sec_rep_robin}. Moreover, note that 
\begin{equation}\label{mean}
\begin{split}
\sum_{x \in \Lambda_N}   r_{N}^{-}(\tfrac{x}{N})\xrightarrow[N\uparrow \infty]{} \sum_{y\geq 1}yp(y)=m,\quad \quad \quad 
\sum_{x \in \Lambda_N}   r_{N}^{+}(\tfrac{x}{N})\xrightarrow[N\uparrow \infty]{} \sum_{y\geq 1}yp(y)=m.
\end{split}
\end{equation}
Therefore, we can write the last two terms in \eqref{genaction} as
\begin{equation*}
m\kappa \int_0^t \{ (\alpha-\overleftarrow{\eta}_0^{\ve N}(sN^2)) G(0) + {(\beta-\overrightarrow{\eta}_{N}^{\ve N}(s N^2))} G(1) \} \, ds,
\end{equation*}
plus lower-orders terms (with respect to $N$). Since  (in some sense that we will see in the proof of Proposition \ref{prop:weak_sol_car} in Section \ref{sec:Characterization of limit points}), 
$$\overrightarrow{\eta}_{0}^{\ve N} (sN^2) \xrightarrow[N\uparrow \infty]{}\rho_s (0), \quad \overleftarrow{\eta}_{N}^{\ve N} (sN^2) \xrightarrow[N\uparrow \infty]{} \rho_s (1)$$ last term writes as 
\begin{equation}\label{rob_cond1}
m\kappa  \int_0^t \{ (\alpha-\rho_s (0))G(0) + {(\beta-\rho_s (1))} G(1) \} \, ds.
\end{equation}
Now we look at the remaining terms, namely, the two last terms on the right hand side of \eqref{bulkaction}.  Recall that the function $G$ has been extended into a two times continuously differentiable function on $\bb R$.
By a Taylor expansion on $G$ we can write those terms as 
 \begin{equation}\label{eq:1_0}
   \cfrac{N}{N-1} \sum_{x\in \Lambda_N}G'(\tfrac{x}{N})\Theta^-_x\eta_x(sN^2)-\cfrac{N}{N-1} \sum_{x\in \Lambda_N}G'(\tfrac{x}{N})\Theta^+_x\eta_x(s N^2)
\end{equation}
plus lower-order terms (with respect to $N$). 
Above for $x\in\Lambda_N$, 
 \begin{eqnarray*}
  \Theta^-_x=\sum_{y\leq 0} (x-y)p(x-y)\quad \textrm{and}\quad \Theta^+_x=\sum_{y\geq N} (y-x)p(x-y).
\end{eqnarray*}
Note that
\begin{equation}
\label{eq:thetaminus}
\sum_{x\in \Lambda_N}\Theta^-_x \; \lesssim \; 1\quad \textrm{and}\quad \frac{1}{N}\sum_{x\in \Lambda_N}x\Theta^-_x  \xrightarrow[N\to \infty]{} 0.
\end{equation}
Moreover, note that
 \begin{equation}
 \label{eq:sum236}
 \begin{split}
 &\sum_{x\in \Lambda_N}\Theta^-_x= \sum_{x\in \Lambda_N}\sum_{y\geq x}yp(y)\xrightarrow[N\uparrow \infty]{} \tfrac{\sigma^2}{2},
\\
&\sum_{x\in \Lambda_N}\Theta^+_x= \sum_{x\in \Lambda_N}\sum_{y\geq N-x}yp(y)\xrightarrow[N\uparrow \infty]{} \tfrac{\sigma^2}{2}.
 \end{split}
 \end{equation}
In order to prove the convergence of  $\,\sum_{x\in \Lambda_N}\Theta^-_x$ (or of $\sum_{x\in \Lambda_N}\Theta^+_x$ in (\ref{eq:sum236})) we use Fubini's theorem  to get that
\begin{equation}
\begin{split}
\sum_{x\in \Lambda_N}\Theta^-_x &= \sum_{y\in\Lambda_{N}}\sum_{x=1}^{y}yp(y)+\sum_{y\geq N}\sum_{x\in \Lambda_{N}}yp(y)\\\nonumber
&= \sum_{y\in\Lambda_{N}}y^{2}p(y)+(N-1)\sum_{y\geq N}yp(y),
\end{split}
\end{equation}
 and since  $\gamma>2$ the result follows. 
By another Taylor expansion on $G$ we can write \eqref{eq:1_0}  as 
 \begin{eqnarray}\label{eq:rob_1}
  \cfrac{N}{N-1} G'(0)\sum_{x\in \Lambda_N}\Theta^-_x\eta_x(s N^2)-\cfrac{N}{N-1}G'(1) \sum_{x\in \Lambda_N}\Theta^+_x\eta_x(s N^2)
\end{eqnarray}
plus lower-order terms (with respect to $N$). 
Thanks to Lemma \ref{Rep-Neumann} we can replace in the  term on the left (resp. right) hand side of last expression $\eta_x(sN^2)$ by $\overrightarrow{\eta}_0^{\ve N}(sN^2)$ (resp. $\overleftarrow{\eta}_{N}^{\ve N}(sN^2)$). Therefore, \eqref{eq:rob_1} can be replaced, for $N$ sufficiently big and then $\varepsilon$ sufficiently small, by
  \begin{eqnarray*}
  G'(0)\tfrac{\sigma^2}{2}\overrightarrow{\eta}_0^{\ve N}(sN^2)-G'(1)\tfrac{\sigma^2}{2}\overleftarrow{\eta}_{N}^{\ve N}(sN^2).
\end{eqnarray*}
Since (in some sense that we will see in the proof of Proposition \ref{prop:weak_sol_car} in Section \ref{sec:Characterization of limit points}), we have that $\overrightarrow{\eta}_{0}^{\ve N} (s N^2) \xrightarrow[N\to \infty]{} \rho_s (0)$ and $\overleftarrow{\eta}_{N}^{\ve N} (sN^2) \xrightarrow[N\to \infty]{} \rho_s (1)$, last term tends to 
  \begin{eqnarray}
  \label{rob_cond2}
 G'(0)\tfrac{\sigma^2}{2}\rho_s (0)-G'(1)\tfrac{\sigma^2}{2}\rho_s (1).
\end{eqnarray}

Putting together \eqref{rob_cond1} and \eqref{rob_cond2} we see the boundary terms  that appear at the right hand side of  \eqref{eq:Robin integral-g}.

 \subsection{The case $\theta \in (1,\infty)$}
 
 \label{subsec:thetage1}
 
In this case we consider an arbitrary function $G:[0,1] \to \RR$ which is two times continuously differentiable and we extend it on $\RR$ in a two times continuously differentiable function with compact support. Its support may strictly contain $[0,1]$ since $G$ can take non-zero values at $0$ and $1$. The last two terms on the right hand side of \eqref{genaction} vanish, as $N\to\infty$ since, we can bound, for example, the first term on the right hand side of  \eqref{genaction}  by a constant times
 \begin{equation*}
N^{1-\theta} \sum_{x \in \Lambda_N}   r_{N}^{-}(\tfrac{x}{N}).
\end{equation*}
Since $\gamma>2$ last expression vanishes if $\theta>1$. Thus, we only need to look at the expression (\ref{bulkaction}). Therefore, in order to see the boundaries terms that appear in \eqref{eq:Robin integral-g}, we can use exactly the computations already done in the case $\theta =1$ from  which we obtain (\ref{rob_cond2}).

Now we prove the convergence to the Laplacian which was required above. 
\begin{lem}\label{convergence laplacian}
Let $G:\bb R\to \bb R$ be a two times continuously differentiable function with compact support. We have
\begin{eqnarray*}
&&\limsup_{N\rightarrow\infty} \sup_{x \in \Lambda_N} \left\vert N^2(K_{N}G)(\tfrac{x}{N}) - \frac{\sigma^{2}}{2}\Delta G(\tfrac{x}{N})\right\vert= 0.
\end{eqnarray*}
\end{lem}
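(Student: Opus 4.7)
The plan is to exploit the symmetry of $p(\cdot)$ together with a Taylor expansion of $G$, and then to split the sum over jumps into a ``bounded'' part (controlled by uniform continuity of $G''$) and a ``tail'' part (controlled by the finite variance of $p$).

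First, by translation invariance, writing $z=y-x$ gives
$$ N^2 (K_N G)(x/N) = N^2 \sum_{z \in \mathbb Z} p(z) \left[ G(\tfrac{x+z}{N}) - G(\tfrac{x}{N}) \right]. $$
A second-order Taylor expansion with remainder yields
$$ G(\tfrac{x+z}{N}) - G(\tfrac{x}{N}) = \tfrac{z}{N} G'(\tfrac{x}{N}) + \tfrac{z^2}{2N^2} G''(\xi_{N}(x,z)), $$
where $\xi_N(x,z)$ lies between $x/N$ and $(x+z)/N$. Because $p$ is symmetric, $\sum_{z} z\, p(z) = 0$, so the first-order contribution disappears, and one obtains
$$ N^2 (K_N G)(x/N) = \tfrac12 \sum_{z \in \mathbb Z} z^2 p(z)\, G''(\xi_N(x,z)). $$
Since $\sum_z z^2 p(z) = \sigma^2$, the goal reduces to showing that
$$ R_N(x) := \tfrac12 \sum_{z \in \mathbb Z} z^2 p(z)\, \bigl[ G''(\xi_N(x,z)) - G''(\tfrac{x}{N}) \bigr] $$
tends to zero uniformly in $x \in \Lambda_N$.

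To control $R_N(x)$, fix $\ve > 0$ and split the sum at some threshold $M \geq 1$. For the tail $|z| > M$, the bound $|G''(\xi_N(x,z)) - G''(x/N)| \leq 2\|G''\|_\infty$ combined with the finite-variance assumption gives
$$ \Bigl| \sum_{|z| > M} z^2 p(z) \bigl[G''(\xi_N(x,z)) - G''(\tfrac{x}{N})\bigr] \Bigr| \;\leq\; 2\|G''\|_\infty \sum_{|z|>M} z^2 p(z), $$
and the right-hand side can be made smaller than $\ve$ by choosing $M$ sufficiently large, uniformly in $N$ and $x$. For $|z| \leq M$, we have $|\xi_N(x,z) - x/N| \leq M/N$; since $G''$ is continuous with compact support, hence uniformly continuous on $\RR$, we can find $N_0 = N_0(M,\ve)$ such that for all $N \geq N_0$, $|G''(\xi_N(x,z)) - G''(x/N)| \leq \ve/\sigma^2$, uniformly in $x\in \Lambda_N$ and in $|z|\leq M$. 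This bounds the central part of the sum by $\tfrac12 \cdot \sigma^2 \cdot \ve/\sigma^2 = \ve/2$.

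Combining the two estimates gives $\sup_{x \in \Lambda_N} |R_N(x)| \lesssim \ve$ for $N$ large enough, which proves the lemma. There is no serious obstacle: the symmetry kills the drift term, finite variance handles the long-range tails, and the uniform continuity of $G''$ (guaranteed by the compact support assumption) takes care of the short-range part.
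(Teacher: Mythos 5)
Your proof is correct, and it takes a genuinely different (and in fact more robust) route than the paper's. The paper splits the jump sum at the $N$-dependent scale $|y|\ge \varepsilon N$ \emph{before} Taylor expanding: the long-jump part is bounded by $\Vert G\Vert_\infty N^2(\varepsilon N)^{-\gamma}$, which vanishes only thanks to the polynomial decay $p(y)\lesssim |y|^{-1-\gamma}$ with $\gamma>2$; on the short jumps it expands to second order, uses symmetry and $\sum_{|y|<\varepsilon N}y^2p(y)\to\sigma^2$, and dismisses the Taylor remainder as ``lower-order terms (with respect to $N$)''. You instead expand globally with the Lagrange form of the remainder, kill the drift term by symmetry (legitimately, since finite variance gives a finite first moment, so the sum may be split into absolutely convergent pieces), and then cut the error $R_N(x)$ at a \emph{fixed} threshold $M$: the tail is controlled by $\Vert G''\Vert_\infty\sum_{|z|>M}z^2p(z)$, i.e.\ by finite variance alone, and the central part by uniform continuity of $G''$, with bounds uniform in $x\in\Lambda_N$. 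This buys two things. First, your argument works for any symmetric transition probability with finite variance, with no appeal to the power-law form of $p$ that the paper's tail estimate uses (though that estimate could also be replaced by a Chebyshev-type bound $N^2\sum_{|y|\ge\varepsilon N}p(y)\le \varepsilon^{-2}\sum_{|y|\ge\varepsilon N}y^2p(y)\to 0$). Second, it makes rigorous the step the paper glosses over: with only $C^2$ regularity, the remainder on the short jumps is small because of the modulus of continuity of $G''$ — small in $\varepsilon$, uniformly in $N$ and $x$ — not because it is of lower order in $N$; your fixed-$M$ split, sending $N\to\infty$ first and then $M\to\infty$, handles exactly this point. The paper's proof is somewhat shorter under its standing assumptions, but yours is the cleaner and more general argument.
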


\begin{proof} 
Let $\varepsilon>0$ be fixed. We have that $N^2(K_{N}G)(\tfrac{x}{N})$ is equal to
\begin{eqnarray}\label{CL1}
N^2\sum_{\vert y \vert \geq \varepsilon N}(G(\tfrac{x+y}{N})-G(\tfrac{x}{N}))p(y) + N^2\sum_{\vert y \vert < \varepsilon N}(G(\tfrac{x+y}{N})-G(\tfrac{x}{N}))p(y).
\end{eqnarray}
The first term in (\ref{CL1}) goes to zero with $N$, since  we have that 
$$\left\vert N^2\sum_{\vert y \vert \geq \varepsilon N}(G(\tfrac{x+y}{N})-G(\tfrac{x}{N}))p(y) \right\vert\lesssim \dfrac{\Vert G\Vert_{\infty}N^2}{(\varepsilon N)^{\gamma}}.$$
On the second term of  (\ref{CL1}) we perform  a Taylor expansion of $G$ and  we have that
\begin{eqnarray*}
& &N^2\sum_{\vert y \vert < \varepsilon N}[G(\tfrac{x+y}{N})-G(\tfrac{x}{N})]p(y)\\
&=&N^2\sum_{\vert y \vert < \varepsilon N} \left[  G^{\p}(\tfrac{x}{N})\tfrac{y}{N}+\dfrac{1}{2}G^{\p\p}(\tfrac{x}{N})(\tfrac{y}{N})^{2}\right]p(y),
\end{eqnarray*} 
plus lower-order terms (with respect to $N$).
Now, we use the fact that $p(\cdot)$ is symmetric to see that $\sum_{\vert y \vert < \varepsilon N}  y p(y) =0$. Since $p(\cdot)$ has finite second moment, $\sum_{|y| < \ve N} y^2 p(y) \to \sigma^2$  so that the proof ends.
\end{proof}

\section{Tightness}
\label{sec:Tightness}

In this section we prove that the sequence $\lbrace  \mathbb {Q}_{N} \rbrace_{N \geq 1} $, defined in Section \ref{sec:HL},  is tight. 
\begin{prop}\label{Tightness}
The sequence of measures $\lbrace\mathbb{Q}_{N}\rbrace_{N\geq 1}$ is tight with respect to the Skorohod topology of $\mathcal D([0, T],{\mathcal{M^{+}}})$.
\begin{proof}
In order to prove the assertion see, for example, Proposition 1.6 of Chapter 4 in \cite{KL}, it is enough to show that, for all $\varepsilon > 0$
\begin{equation}
\label{T1}
\displaystyle \lim _{\delta \rightarrow 0} \limsup_{N\rightarrow\infty} \sup_{\tau  \in \mathcal{T}_{T},\bar\tau \leq \delta} {\mathbb{P}}_{\mu _{N}}\Big(\eta_{\cdot}^{N}\in {\mathcal D} ( [0,T], \Omega_{N}) :\left\vert \langle\pi^{N}_{\tau+ \bar\tau},G\rangle-\langle\pi^{N}_{\tau},G\rangle\right\vert > \ve \Big) =0, 
\end{equation}
holds for any function $G$ belonging to $C([0,1])$. Here $\mathcal{T}_{T}$ is the set of stopping times bounded by $T$ and we implicitly assume that all the stopping times are bounded by $T$, thus, $\tau+ \bar\tau$ should be read as $ (\tau+ \bar\tau) \wedge T$. In fact it is enough to prove the assertion for functions $G$ in a dense subset of $C([0,1])$, with respect to the uniform topology. 

We split the proof according to two different regimes of $\theta$, namely $\theta\geq 1$ and $\theta<1$. When $\theta\geq 1$ we  prove (\ref{T1}) directly for functions  $G \in C^{2}([0,1])$ and we conclude that the sequence is tight. When $\theta<1$, we prove (\ref{T1}) first for functions $G \in C^{2}_{c}(0,1)$ and then we extend it, by a $L^1$ approximation procedure which is explained below, to functions $G\in C^1([0,1])$, the latter space being dense in $C([0,1])$ for the uniform topology.

Recall from (\ref{Dynkin'sFormula})  that $M_{t}^{N}(G)$ is a martingale with respect to the natural filtration  $\{\mathcal{F}_{t}\}_{t\geq 0}$. In order to prove (\ref{T1}) it is enough to show that
\begin{eqnarray} 
\label{TC1}
\displaystyle \lim _{\delta\rightarrow 0} \limsup_{N\rightarrow\infty} \sup_{\tau  \in \mathcal{T}_{T},\bar\tau \leq \delta}\mathcal{\mathbb{E}}_{\mu _{N}}\Big[ \Big| \int_{\tau}^{\tau+ \bar\tau}\Theta(N) L_{N}\langle \pi_{s}^{N},G\rangle ds \Big|\Big] = 0
\end{eqnarray}
and
 \begin{equation} 
 \label{TC2}
\displaystyle \lim _{\delta\rightarrow 0} \limsup_{N\rightarrow\infty} \sup_{\tau  \in \mathcal{T}_{T},\bar\tau \leq \delta}\mathcal{\mathbb{E}}_{\mu _{N}}\left[\left( M_{\tau}^{N}(G)- M_{\tau+ \bar\tau}^{N}(G) \right)^{2}  \right]=0.
\end{equation}

\vspace{0.5cm}
\noindent 
\underline{Proof of \eqref{TC1}:} Given a function $G$, we claim that we can find a positive constant $C:=C(G,\alpha,\beta, \gamma, \kappa)$ such that 
\begin{equation}
\label{eq:claim}
\vert \Theta(N) L_N ( \langle \pi^N_{s}, G \rangle )\vert \le  C
\end{equation}
for any $s\le T$, which trivially implies \eqref{TC1}. To prove it, we recall \eqref{gen_action} and start to prove that the last two terms of \eqref{gen_action} are bounded. For example, the absolute value of the second term at the right hand side of \eqref{gen_action} is bounded from above by 
\begin{equation}\label{T2}
\int_0^t\Big| \frac{ \Theta(N)\kappa }{(N-1)N^{\theta}} \sum_{x \in \Lambda_N}  (G r_N^-)(\tfrac{x}{N})(\alpha-\eta^N_{x}(s))\Big| ds. 
\end{equation}
Now, for $\theta < 1$, we use the fact that $G \in C_{c}^{2}(0,1)$ and that $|\eta_x^N (s)| \le 1$ is bounded, and we bound from above this last term by a constant times $\Theta(N)N^{-\theta-\gamma}$.  Using the definition of $\Theta(N)$ it is easy to see, for $\theta < 2-\gamma$ and for $2-\gamma\leq\theta <1$,   that  (\ref{T2}) is bounded from above by a constant. This proves \eqref{eq:claim} in the case $\theta<1$. In the case   $\theta\geq 1$, we use the fact that the sum in (\ref{T2}) is uniformly bounded in $N$  to conclude that (\ref{T2}) is bounded from above even if $G$ does not have a compact support included in $(0,1)$ . A similar argument can be done for the last term at the right hand side of \eqref{gen_action}.

Now we need to bound the first term at the right hand side of  (\ref{gen_action}). For $\theta<1$ we use the fact that $G\in C^2_c(0,1)$ so that 
 $\Big|\dfrac{\Theta(N)}{N-1} \langle \pi^N_{s}, \mc L_N G \rangle\Big| $ is less or equal  than
 \begin{equation}\label{T41}
\begin{split}
\cfrac{\Theta(N)}{N-1} \sum_{x\in \Lambda_N} \vert K_NG (\tfrac{x}{N})\vert  
 + \cfrac{\Theta(N)}{N-1} \sum_{x\in \Lambda_N}|G(\tfrac{x}{N})| r_N^-\Big(\tfrac{x}{N}\Big)
+\cfrac{\Theta(N)}{N-1} \sum_{x\in \Lambda_N}|G(\tfrac{x}{N})| r_N^+\Big(\tfrac{x}{N}\Big).
\end{split}
\end{equation}
The two terms at the right hand side of the previous expression can be bounded from above by a constant times $\Theta(N)N^{-\gamma}$.  It is clearly bounded in the case $\theta \ge 2-\gamma$ since then $\Theta(N)=N^2$ (recall $\gamma>2$). In the case $\theta<2-\gamma$, $\Theta(N)=N^{\theta+\gamma}$ and thus $\Theta(N)N^{-\gamma}$ is bounded. This together with 
 Lemma \ref{convergence laplacian} shows that $$\Big|\dfrac{\Theta(N)}{N-1} \langle \pi^N_{s}, \mc L_N G \rangle\Big| \le C, $$
 which proves the claim (\ref{eq:claim}) in the case $\theta<1$. Now, in the case $\theta\geq 1$, since $\Theta(N)=N^2$, we have that  the first term at the right hand side of  (\ref{gen_action}) is bounded from above by a constant times
  \begin{equation}\label{T4}
\begin{split}
&\cfrac{N^{2}}{N-1} \sum_{x\in \Lambda_N} \vert K_NG (\tfrac{x}{N})\vert  
 + \cfrac{N^{2}}{N-1} \sum_{x\in \Lambda_N}\sum_{y\leq 0}\left\vert G(\tfrac{y}{N})- G(\tfrac{x}{N})\right\vert p(x-y)\\
+&\cfrac{N^{2}}{N-1} \sum_{x\in \Lambda_N}\sum_{y\geq N}\left\vert G(\tfrac{y}{N})- G(\tfrac{x}{N})\right\vert p(x-y).
\end{split}
\end{equation}
 By the Mean Value Theorem, the two terms at the right hand side of the previous expression can be bounded from above by   
   \begin{equation}\label{T42}
\|G'\|_{\infty} \sum_{x\in \Lambda_N}\sum_{y\leq 0} |y-x|p(x-y)\lesssim \sum_{x\in \Lambda_N}\frac{1}{x^{\gamma-1}}
\end{equation}
which is finite since $\gamma>2$.
This together with Lemma \ref{convergence laplacian} proves   (\ref{eq:claim}) in the case $\theta\geq 1$.

\vspace{0.5cm}
\noindent 
\underline{Proof of \eqref{TC2}:} We know by Dynkin's formula that  $$\displaystyle\left( M^{N}_{t}(G)\right)^{2}-\int^{t}_{0} \Theta(N)\left[ L_{N} \langle\pi^{N}_{s},G \rangle^{2}- 2\langle\pi^{N}_{s},G \rangle L_{N} \langle\pi^{N}_{s},G \rangle\right]ds,$$ is a martingale with respect to the natural filtration  $\{\mathcal{F}_{t}\}_{t\ge 0}$.
From the computations of Appendix \ref{sec:gen_comp} we get that the term inside the time integral in the previous display is equal to 
\begin{eqnarray*}\label{T5}
&&\dfrac{\Theta(N)}{(N-1)^{2}} \sum_{x<y\in\Lambda_{N}}\left( G\left(\tfrac{x}{N} \right) -G\left(\tfrac{y}{N} \right)\right)^{2}p(x-y)(\eta^N_{y}(s)-\eta^N_{x}(s))^{2}\nonumber\\
&+&\dfrac{\Theta(N)\kappa}{N^{\theta}(N-1)^{2}}\sum_{x\in\Lambda_{N}} G^{2}\left(\tfrac{x}{N}\right) r_N^-(\tfrac{x}{N})(\alpha-\eta^N_{x}(s))(1-2\eta^N_{x}(s)) \nonumber\\
&+& \dfrac{\Theta(N)\kappa}{N^{\theta}(N-1)^{2}}\sum_{x\in\Lambda_{N}} G^{2}\left(\tfrac{x}{N}\right) r_N^+(\tfrac{x}{N})(\beta-\eta^N_{x}(s))(1-2\eta^N_{x}(s)). \nonumber\\
\end{eqnarray*}
Since $\Theta (N) \le N^2$ and $G'$ is bounded it is easy to see that the absolute value of the previous display is bounded from above by a constant times
\begin{eqnarray}
\label{T6}
&&\dfrac{1}{(N-1)^{2}} \sum_{x,y\in\Lambda_{N}} (x-y)^{2}p(x-y)\nonumber
+\dfrac{\Theta(N)}{N^{\theta}(N-1)^{2}}\sum_{ x\in\Lambda_{N}} G^{2}\left(\tfrac{x}{N}\right)\Big( r_N^-(\tfrac{x}{N})+r_N^+(\tfrac{x}{N})\Big)\nonumber\\
\end{eqnarray}
Since $\sum_{x,y\in\Lambda_{N}} (x-y)^{2}p(x-y) = \mathcal{O}(N)$ the first term in (\ref{T6}) is $\mathcal{O}(N^{-1})$. For the second term at the right hand side of \eqref{T6}, we split the argument according to the cases $\theta \ge 1$ and $\theta<1$. First when $\theta\geq1$, by using  the fact that $\gamma>2$ and $G$ is bounded so that the sum in that term is finite, and since $\Theta(N)=N^2$, we conclude that  the term is $\mathcal{O}(N^{-\theta})\leq \mathcal {O}(N^{-1})$. From this we obtain \eqref{TC2}. Now if $\theta<1$, recall that $G$ has compact support and (\ref{eq:rs-r+-}). We then write
$$\dfrac{\Theta(N)}{N^{\theta}(N-1)^{2}}\sum_{ x\in\Lambda_{N}} G^{2}\left(\tfrac{x}{N}\right)\Big( r_N^-(\tfrac{x}{N})+r_N^+(\tfrac{x}{N})\Big) = \dfrac{\Theta(N)}{N^{\theta+\gamma} (N-1)}\;  I_N (G)$$
where $I_N (G)$ is a Riemann sum converging to $\int_0^1 G^2 (q) \big[ r^- (q) + r^+ (q) \big] dq <\infty.$ Therefore the second term in (\ref{T6}) is of order $\Theta (N) N^{-1-\theta-\gamma}= {\mc O} (N^{-1})$ by (\ref{time_scales}). 
\\

This ends the proof of tightness in the case $\theta\geq 1$, since  $C^{2}([0,1])$ is a dense subset of $C([0,1])$ with respect to the uniform topology. Nevertheless, for $\theta < 1$, we have proved (\ref{TC1}) and (\ref{TC2}), and thus (\ref{T1}), only for functions $G\in C^{2}_{c}(0,1)$ and we need to extend this result to functions in $C^1([0,1])$. To accomplish that,  we take a function $G \in C^1([0,1])\subset L^{1}([0,1])$,  and we take  a sequence of functions $\{G_{k}\}_{k \geq 0} \in C^{2}_{c}(0,1)$ converging to $G$ with respect to the $L^{1}$-norm as $k \to \infty$. Now, since the probability in (\ref{T1}) is less or equal than 
\begin{eqnarray*}\label{T7} \nonumber
 &&{\mathbb{P}}_{\mu _{N}}\Big(\eta_{\cdot}^{N}\in {\mathcal D} ( [0,T], \Omega_{N}) :\left\vert \langle\pi^{N}_{\tau+ \bar\tau},G_{k}\rangle-\langle\pi^{N}_{\tau},G_{k}\rangle\right\vert > \dfrac{\ve}{2} \Big)  \\\nonumber
&+&{\mathbb{P}}_{\mu _{N}}\Big(\eta_{\cdot}^{N}\in {\mathcal D} ( [0,T], \Omega_{N}) :\left\vert \langle\pi^{N}_{\tau+ \bar\tau},G-G_{k}\rangle-\langle\pi^{N}_{\tau},G-G_{k}\rangle\right\vert > \dfrac{\ve}{2} \Big) \\
\end{eqnarray*}
and  since $G_{k}$ has compact support, from the computation above, it remains only to check that the last probability vanishes as $N \to \infty$ and then $k \to \infty$. 
For that purpose, we use the fact that 
\begin{equation}\label{T8}
\left\vert \langle\pi^{N}_{\tau+ \bar\tau},G-G_{k}\rangle-\langle\pi^{N}_{\tau},G-G_{k}\rangle\right\vert\leq \dfrac{2}{N}\sum _{x\in \Lambda_{N}}\left\vert(G-G_{k})(\tfrac{x}{N})\right\vert, 
\end{equation}
and we use the estimate
\begin{equation*}
\begin{split}
\dfrac{1}{N}\sum _{x\in \Lambda_{N}}\left\vert(G-G_{k})(\tfrac{x}{N})\right\vert & \le \sum _{x\in \Lambda_{N}}  \int_{x/N}^{(x+1)/N} \left\vert(G-G_{k})(\tfrac{x}{N}) -(G-G_k) (q) \right\vert dq \\
&+\,  \int_{0}^{1}\vert (G-G_{k})(q)\vert dq\\
& \le \cfrac{1}{N} \| (G- G_k)' \|_{\infty} + \int_{0}^{1}\vert (G-G_{k})(q)\vert dq.
\end{split}
\end{equation*}
We conclude the result by taking first the limsup in $N\to\infty$ and then in $k \to \infty$.
\end{proof}
\end{prop}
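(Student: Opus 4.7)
The plan is to apply the standard Aldous-type tightness criterion for the Skorohod space $\mathcal{D}([0,T], \mathcal{M}^+)$, as formulated in Proposition 1.6 of Chapter 4 of Kipnis--Landim, which reduces tightness of $\{\mathbb{Q}_N\}$ to controlling, for test functions $G$ in a suitable dense subclass of $C([0,1])$, the probability that the oscillation $|\langle \pi_{\tau+\bar\tau}^N, G\rangle - \langle \pi_\tau^N, G\rangle|$ exceeds $\varepsilon$, uniformly in stopping times $\tau \le T$ with increments $\bar\tau \le \delta$, as first $N\to\infty$ and then $\delta \to 0$. Since the state space $\Omega_N$ is compact, no further uniform bound on $\langle \pi_t^N, G\rangle$ is needed.

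My starting point would be Dynkin's formula \eqref{Dynkin'sFormula}, which splits the increment $\langle \pi_{\tau+\bar\tau}^N, G\rangle - \langle \pi_\tau^N, G\rangle$ into the martingale piece $M_{\tau+\bar\tau}^N(G) - M_\tau^N(G)$ and the compensator $\int_\tau^{\tau+\bar\tau} \Theta(N) L_N \langle \pi_s^N, G\rangle\, ds$. Each piece is then handled by Markov's / Chebyshev's inequality: for the compensator a uniform $L^\infty$ bound of the form $|\Theta(N) L_N \langle \pi_s^N, G\rangle| \le C(G)$ immediately produces a contribution of order $\bar\tau \le \delta$; for the martingale, Doob's inequality reduces matters to showing that the associated quadratic variation, computed from $\Theta(N)(L_N \Phi_G^2 - 2\Phi_G L_N \Phi_G)$ with $\Phi_G(\eta) = \langle \pi^N(\eta), G\rangle$, is $o_N(1)$.

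For the compensator I would expand $\Theta(N) L_N \langle \pi_s^N, G\rangle$ into a bulk piece plus two reservoir pieces, as in \eqref{genaction}--\eqref{bulkaction}. The bulk piece is controlled by Lemma \ref{convergence laplacian} in the diffusive regime $\Theta(N)=N^2$, and is manifestly of smaller order when $\Theta(N)=N^{\gamma+\theta}$ with $\theta<2-\gamma$, because a mean-zero Taylor expansion of $G$ extracts two factors of $1/N$ against the finite variance of $p(\cdot)$. The reservoir pieces require the asymptotics \eqref{eq:rs-r+-} and the identity $\sum_{x\in\Lambda_N} r_N^\pm(x/N) \to m$ coming from Fubini and $\gamma>2$. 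For $\theta \ge 1$ these identities together with $|\eta_x^N|\le 1$ give a bound depending only on $\|G\|_\infty$, and no restriction on the support of $G$ is needed; for $\theta<1$ the prefactor $\Theta(N) N^{-\theta-\gamma}$ is bounded but the sum $\sum_x r_N^\pm(x/N)|G(x/N)|$ need only stay bounded after multiplication by $N^\gamma$, which forces me to restrict first to $G\in C_c^2(0,1)$ so that the Riemann sum converges to $\int_0^1 |G|(r^-+r^+)$ on the open interval, avoiding the singularities of $r^\pm$ at the endpoints.

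For the quadratic variation I would use the generator computations recalled in Appendix \ref{sec:gen_comp}: the bulk piece produces $\Theta(N)(N-1)^{-2}\sum_{x,y\in\Lambda_N}(G(x/N)-G(y/N))^2 p(x-y) = O(\Theta(N) N^{-3}\sigma^2)$, which is $O(N^{-1})$ because $\Theta(N)\le N^2$, and the boundary pieces scale like $\Theta(N) N^{-\theta-2}\sum_x G^2(x/N)(r_N^-+r_N^+)(x/N)$, again $O(N^{-1})$ in every regime after applying \eqref{eq:rs-r+-} and \eqref{time_scales}. Finally, in the case $\theta<1$ I must upgrade tightness from $G\in C_c^2(0,1)$ to $G\in C^1([0,1])$ (which is dense in $C([0,1])$); the natural device is an $L^1$-approximation by a sequence $\{G_k\}\subset C_c^2(0,1)$ together with the deterministic bound $|\langle \pi_t^N, G-G_k\rangle| \le N^{-1}\sum_x |(G-G_k)(x/N)|$, which converges to $\|G-G_k\|_{L^1([0,1])}$ up to a Riemann-sum error of order $N^{-1}\|(G-G_k)'\|_\infty$, and then letting $N\to\infty$ and $k\to\infty$ in this order.

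I expect the main obstacle to be the careful $\theta$-dependent bookkeeping in the fast reservoir regime $\theta<2-\gamma$: the reservoir rate $\kappa N^{-\theta}$ is very large, but must be compensated by both the sub-diffusive time scale $\Theta(N) = N^{\gamma+\theta}$ and the decay $r_N^\pm(x/N) = O(N^{-\gamma})$ away from the boundary, the latter holding only when $G$ is supported strictly inside $(0,1)$. Ensuring that the compensator is uniformly bounded in this regime, rather than just pointwise finite, is the delicate step, and it is also what drives the choice of test-function class in the subsequent $L^1$-density argument.
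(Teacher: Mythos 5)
Your proposal is correct and follows essentially the same route as the paper's own proof: the Kipnis--Landim stopping-time criterion, the Dynkin decomposition with a uniform $L^\infty$ bound on the compensator $\Theta(N)L_N\langle\pi_s^N,G\rangle$, the quadratic-variation estimate from the generator computations, and the same $\theta$-dependent choice of test-function class ($C^2([0,1])$ for $\theta\ge 1$, $C^2_c(0,1)$ for $\theta<1$) followed by the identical $L^1$-approximation argument to reach $C^1([0,1])$. The only cosmetic difference is invoking Doob's inequality for the martingale increment where the paper uses the optional-stopping identity directly; both yield the same estimate.
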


\section{Replacement lemmas and auxiliary results}\label{sec:RL}
In this section we establish some technical results needed in the proof of the hydrodynamic limit. In what follows, we will suppose without loss of generality that $\alpha \leq \beta$. Let  $h : [0, 1] \rightarrow [0, 1]$ be a Lipschitz function such that  $\alpha \leq h(q)\leq\beta$, for all $q\in [0,1]$. Let $\nu_{h(\cdot)}^{N}$ be the Bernoulli product measure on 
$\Omega_{N}$ with marginals given by
$$\nu_{h(\cdot)}^{N}\lbrace \eta: \eta_{x} = 1 \rbrace = h\left( \tfrac{x}{N}\right).$$

Given two functions $f,g:\Omega_{N} \to \RR$ and a probability measure $\mu$ on $\Omega_{N}$, we denote here by $\langle f, g\rangle_{\mu }$ the scalar product between $f$ and $g$ in $L^2 (\Omega_N, \mu)$, that is,
$$ \langle f, g\rangle_{\mu } = \int _{\Omega_{N}} f(\eta)g(\eta) \, d\mu .$$
The notation above should note be mistaken to the notation that we introduced in Section \ref{sec:hyd_eq}.
We denote  by $H_{N}(\mu\vert \nu_{h(\cdot)}^{N})$ the relative entropy of a probability measure $\mu$ on 
$\Omega_{N}$ with respect to the probability measure $\nu_{h(\cdot)}^{N} $ on $\Omega_{N}$. It is easy to prove the existence of a constant $C_0 :=C_0(\alpha,\beta)$, such that
\begin{equation}\label{H}
H_{N}(\mu\vert \nu_{h(\cdot)}^{N})\leq NC_0.
\end{equation}
In fact, using the explicit formula for the  entropy and the definition of the product measure $\nu_{h(\cdot)}^{N}$, we get that
\begin{eqnarray*}
H(\mu|\nu_{h(\cdot)}^{N}) &=& \sum_{\eta\in \Omega_{N}}\mu(\eta)\log\left(\dfrac{\mu(\eta)}{\nu_{h(\cdot)}^{N}(\eta)}\right)
\leq  \sum_{\eta\in \Omega_{N}}\mu(\eta)\log\left(\dfrac{1}{\nu_{h(\cdot)}^{N}(\eta)}\right)\\
&\leq & \log\left(\left[\dfrac{1}{\alpha \wedge (1-\beta)}\right]^{N} \right)\sum_{\eta\in \Omega_{N}}\mu(\eta)
\leq  N\log\left(\dfrac{1}{\alpha \wedge (1-\beta)}\right)\leq NC_0.
\end{eqnarray*}

\subsection{Estimates on Dirichlet forms}
For a probability measure $\mu$ on $\Omega_N$, $x,y \in \Lambda_N$ and a density function $f:\Omega_N \to [0,\infty)$ with respect to $\mu$ we introduce 
\begin{eqnarray*}
I_{x,y}(\sqrt f,\mu)&:=& \int_{\Omega_N} \left(\sqrt {f(\sigma^{x,y}\eta)}-\sqrt {f(\eta)}\right)^{2} d\mu,\\
I_{x}^{\alpha}(\sqrt f,\mu)&:=& \int_{\Omega_N}  c_{x}(\eta;\alpha)\left(\sqrt {f(\sigma^{x}\eta)}-\sqrt {f(\eta)}\right)^{2} d\mu.
\end{eqnarray*}
Then we define
\begin{eqnarray*}
D_{N}(\sqrt{f},\mu )&:=& (D_{N}^{0}+D_{N}^{\ell}+D_{N}^{r})(\sqrt{f},\mu) \end{eqnarray*}
where 
\begin{eqnarray} 
\label{left_rig_form}
D_{N}^{0}(\sqrt{f},\mu):=\cfrac{1}{2}\sum_{x,y\in\Lambda_N}p(y-x)\, I_{x,y}(\sqrt{f},\mu), 
\end{eqnarray}
\begin{eqnarray}
\label{left_dir_form}
D_{N}^{\ell}(\sqrt{f},\mu):=\frac{\kappa}{N^\theta}\sum_{x\in\Lambda_N}\sum_{y\leq 0}p(y-x)\, I^\alpha_{x}(\sqrt{f},\mu)=\frac{\kappa}{N^\theta}\sum_{x\in\Lambda_N}r_N^-(\tfrac{x}{N})I^\alpha_{x}\, (\sqrt{f},\mu) 
\end{eqnarray} 
and $D_{N}^{r}(\sqrt{f},\mu)$ is the same as $D_{N}^{\ell}(\sqrt{f},\mu)$ but in $I^\alpha_{x}(\sqrt{f},\mu)$ the parameter $\alpha$ is replaced by $\beta$ and $r_N^-(\cdot)$ is replaced by $r_N^+(\cdot)$.

Our first goal is to express, for the measure $\nu_{h(\cdot)}^{N}$,  a relation between the Dirichlet form defined by $\langle L_N\sqrt{f},\sqrt{f} \rangle_{\nu_{h(\cdot)}^{N}}$ and $
D_{N}(\sqrt{f},\nu_{h(\cdot)}^{N} )$.
More precisely, we claim that for any positive constant $B$, there exists a constant $C>0$ such that
\begin{equation}\label{dir_est}
\begin{split}
\frac{1}{BN}\langle L_{N}\sqrt{f},\sqrt{f} \rangle_{\nu_{h(\cdot)}^N} &\leq -\dfrac{1}{4BN}D_{N}(\sqrt{f},\nu_{h(\cdot)}^N) + \frac{C}{BN}\sum_{x,y\in\Lambda_N}p(y-x)\Big(h(\tfrac xN)-h(\tfrac yN)\Big)^2\\ &+ \frac{C\kappa}{BN^{1+\theta}} \sum_{x\in\Lambda_N} \left\{ \Big(h(\tfrac xN)-\alpha)^2r_N^{-}(\tfrac xN)+\Big(h(\tfrac xN)-\beta\Big)^2r_N^+(\tfrac xN)\right\}.
\end{split}
\end{equation}
Our aim is then to choose $h(\cdot)$ in order to minimize the error term, i.e. the two last terms at  the right hand side of the previous inequality. 

If $h(\cdot)$ is such that $h(0)=\alpha$ and $h(1) =\beta$, since it is assumed to be Lipschitz, we get the estimate
\begin{equation}
\label{dir_est_lip}
\begin{split}
\frac{N}{B}\langle L_{N}\sqrt{f},\sqrt{f} \rangle_{\nu_{h(\cdot)}^N} &\leq -\dfrac{N}{4B}D_{N}(\sqrt{f},\nu_{h(\cdot)}^N) + \frac{C}{B}\sigma^2\\& + \frac{C\kappa}{BN^{1+\theta}}\sum_{x\in\Lambda_N}\Big\{x^2r_N^-(\tfrac{x}{N})+\big( x-N\big)^2r^+_N(\tfrac{x}{N})\Big\}.
\end{split}
\end{equation}
Moreover, if the function $h(\cdot)$ is such that $h(0)=\alpha$ and $h(1) =\beta$, H\"older of parameter $\gamma/2$ at the boundaries and Lipschitz inside, then we have 
\begin{equation}
\label{dir_est_holder}
\begin{split}
\frac{N}{B}\langle L_{N}\sqrt{f},\sqrt{f} \rangle_{\nu_{h(\cdot)}^N} \leq& -\dfrac{N}{4B}D_{N}(\sqrt{f},\nu_{h(\cdot)}^N) + \frac{C}{B} \sigma^2 + \frac{C\kappa}{BN^{\gamma+\theta-2}}.
\end{split}
\end{equation}
On the other hand if the function $h(\cdot)$ is constant, equal to $\alpha$ or to $\beta$, then we have 
\begin{equation}\label{dir_est_const}
\begin{split}
\frac{N}{B}\langle L_{N}\sqrt{f},\sqrt{f} \rangle_{\nu_{\alpha}} &\leq  -\dfrac{N}{4B}D_{N}(\sqrt{f},\nu_{\alpha}) + \frac{C\kappa}{B} N^{1-\theta}.
\end{split}
\end{equation}

In order to prove \eqref{dir_est} we need some intermediate results. In what follows $C$ is a constant depending on $\alpha$ and $\beta$ whose value can change from line to line. 

\begin{lem}
\label{lemmaleft0}
Let $T : \eta \in  \Omega_N \to T(\eta) \in \Omega_N$ be a transformation and $c: \eta \to c(\eta)$ be a positive local function. Let $f$ be a density with respect to a probability  measure $\mu$ on $\Omega_N$. Then, we have that
\begin{eqnarray}
\label{use_comp}
\begin{split}
&\left\langle c (\eta) [ \sqrt{f(T (\eta))} -\sqrt{f(\eta)}]\; ,\;\sqrt{f (\eta)} \right\rangle_{\mu}  \\
&\le  -\dfrac{1}{4}\int c (\eta)\left(\left[ \sqrt{f(T (\eta))}\right]-\left[ \sqrt{f(\eta)}\right]\right)^{2}  d\mu\\
& + \dfrac{1}{16}\int \dfrac{1}{c (\eta)}\left[c (\eta)-c (T(\eta)) \dfrac{\mu (T(\eta))}{\mu(\eta)} \right]^{2}\left(\left[ \sqrt{f(T(\eta))}\right]+\left[ \sqrt{f(\eta)}\right]\right)^{2}  d\mu.
\end{split}
\end{eqnarray}
\end{lem}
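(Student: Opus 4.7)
The plan is to expand the inner product, symmetrize the resulting cross-term via the change of variables $\eta \mapsto T\eta$ (which we assume to be an involution, as it is for both $T = \sigma^{x,y}$ and $T = \sigma^{x}$, the only transformations to which we shall apply the lemma), and then close the estimate with a weighted Young inequality. Throughout, write $A = \sqrt{f(T\eta)}$ and $B = \sqrt{f(\eta)}$.

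First I would use the elementary identity $(A-B)B = \tfrac{1}{2}(A-B)(A+B) - \tfrac{1}{2}(A-B)^{2}$ to split
\begin{equation*}
\left\langle c(\eta)[A-B],\, B \right\rangle_{\mu} = -\tfrac{1}{2}\int c(\eta)(A-B)^{2}\, d\mu \; + \; \tfrac{1}{2}\int c(\eta)(A-B)(A+B)\, d\mu.
\end{equation*}
The first term on the right is already of the desired sign; it carries the coefficient $-1/2$ instead of the claimed $-1/4$, with the missing $1/4$ to be spent absorbing part of the second (cross) term.

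Next, to rewrite the cross term, observe that $(A-B)(A+B) = f(T\eta) - f(\eta)$. Performing the change of variables $\eta \mapsto T\eta$ and using $T^{2} = \mathrm{id}$, one gets
\begin{equation*}
\int c(\eta)\bigl(f(T\eta)-f(\eta)\bigr)\, d\mu = -\int c(T\eta)\bigl(f(T\eta)-f(\eta)\bigr) \frac{\mu(T\eta)}{\mu(\eta)}\, d\mu,
\end{equation*}
and averaging the two expressions gives the symmetrized form
\begin{equation*}
\int c(\eta)(A-B)(A+B)\, d\mu = \tfrac{1}{2}\int \left[c(\eta) - c(T\eta)\tfrac{\mu(T\eta)}{\mu(\eta)}\right](A-B)(A+B)\, d\mu.
\end{equation*}

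Finally, I would apply Young's inequality in the sharp form $|xy| \le x^{2} + y^{2}/4$ (i.e.\ with parameter $a=2$) to the pointwise factors $x = \sqrt{c(\eta)}\,(A-B)$ and $y = \tfrac{1}{\sqrt{c(\eta)}}[c(\eta) - c(T\eta)\tfrac{\mu(T\eta)}{\mu(\eta)}](A+B)$, obtaining
\begin{equation*}
\Bigl|\bigl[c(\eta) - c(T\eta)\tfrac{\mu(T\eta)}{\mu(\eta)}\bigr](A-B)(A+B)\Bigr| \le c(\eta)(A-B)^{2} + \tfrac{1}{4\,c(\eta)}\Bigl[c(\eta) - c(T\eta)\tfrac{\mu(T\eta)}{\mu(\eta)}\Bigr]^{2}(A+B)^{2}.
\end{equation*}
Combining these three displays, the contribution $\tfrac{1}{4}\int c(\eta)(A-B)^{2}\, d\mu$ coming from Young's inequality is absorbed into the first term (turning $-1/2$ into $-1/4$), while the remaining term produces exactly the factor $1/16$ in front of the error piece. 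This yields \eqref{use_comp}.

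There is no serious obstacle: the only nontrivial step is the symmetrization, whose validity rests on $T$ being an involution and on the correct bookkeeping of the Radon–Nikodym factor $\mu(T\eta)/\mu(\eta)$; the exact constants $1/4$ and $1/16$ are forced by the optimal choice $a=2$ in Young's inequality, which is precisely the split that turns the leftover $-1/4$ of the Dirichlet form into the claimed coefficient.
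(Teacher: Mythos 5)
Your proof is correct and follows essentially the same route as the paper's: the same splitting $(A-B)B=\tfrac12(A-B)(A+B)-\tfrac12(A-B)^2$, the same symmetrization of the cross term via the change of variables $\eta\mapsto T\eta$ with the Radon--Nikodym factor $\mu(T\eta)/\mu(\eta)$, and the same weighted Young inequality producing the constants $1/4$ and $1/16$. Your explicit remark that $T$ must be an involution (true for $\sigma^{x,y}$ and $\sigma^{x}$, the only cases used) is a point the paper leaves implicit, and it is a welcome clarification rather than a deviation.
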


\begin{proof}
{By writing the term at the left hand side of \eqref{use_comp} as its half plus its half and summing and subtracting the term needed to complete the square as written in the first term at the right hand side of \eqref{use_comp}}, we have that
\begin{equation*}
\begin{split}
\int c (\eta)& \left[ \sqrt{f( T(\eta))}-\sqrt{f(\eta)}\right]\sqrt{f(\eta)} \; d\mu\\
&=-\dfrac{1}{2}\int c (\eta) \left[ \sqrt{f(T(\eta))}-\sqrt{f(\eta)}\right]^{2} \; d\mu\\
&+\dfrac{1}{2}\int \left[ \sqrt{f(T(\eta))}\right]^{2} \left[c (\eta)-c (T(\eta)){\dfrac{\mu (T(\eta))}{\mu(\eta)}}\right]\; d\mu.
\end{split}
\end{equation*}
Repeating again the same argument,  the second term at the right hand side of last expression can be written as 
\begin{eqnarray*}
\dfrac{1}{4}\int \left(\left[ \sqrt{f( T(\eta))}\right]^{2}-\left[ \sqrt{f(\eta)}\right]^{2}\right) \left[c (\eta)-c ( T(\eta)) {\dfrac{\mu (T(\eta))}{\mu(\eta)}}\right] d\mu.\\
\end{eqnarray*}
By Young's inequality and the elementary equality $a^2-b^2=(a-b)(a+b)$, last expression  is bounded from above by
\begin{eqnarray*}
&& \dfrac{1}{4}\int c (\eta)\left(\left[ \sqrt{f(T(\eta))}\right]-\left[ \sqrt{f(\eta)}\right]\right)^{2}  d\mu\\
&+& \dfrac{1}{16}\int \dfrac{1}{c (\eta)}\Big[c (\eta)-c (T(\eta)) {\dfrac{\mu (T(\eta))}{\mu(\eta)}}\Big]^{2}\left(\left[ \sqrt{f( T(\eta))}\right]+\left[ \sqrt{f(\eta)}\right]\right)^{2}  d\mu,\\
\end{eqnarray*}
which finishes the proof.
 \end{proof}
 
\begin{lem}
\label{lem:densaf}
There exists a constant $C:=C(h)$ such that for any $N \ge 1$ and density $f$ be a density with respect to $\nu_{h(\cdot)}^N$
\begin{equation*}
\sup_{x\ne y \in \Lambda_N} \int_{\Omega_N} f (\sigma^{x,y} \eta) \; d\nu_{h(\cdot)}^N (\eta) \; \le \; C, \quad \quad  \sup_{x \in \Lambda_N} \int_{\Omega_N} f (\sigma^{x} \eta) \; d\nu_{h(\cdot)}^N (\eta) \; \le \; C.
\end{equation*} 
\end{lem}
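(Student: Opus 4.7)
The plan is to prove both bounds by a change of variables and a uniform bound on the Radon--Nikodym derivative of $\nu_{h(\cdot)}^{N}$ under the transformations $\sigma^{x,y}$ and $\sigma^{x}$. Since $h$ takes values in $[\alpha,\beta]\subset(0,1)$ and both transformations are involutions on $\Omega_{N}$, the argument is essentially algebraic.

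First I would use the fact that $\sigma^{x,y}$ is an involution. Changing variables $\eta'=\sigma^{x,y}\eta$ gives
\begin{equation*}
\int_{\Omega_{N}} f(\sigma^{x,y}\eta)\, d\nu_{h(\cdot)}^{N}(\eta)
\;=\; \int_{\Omega_{N}} f(\eta')\,
\frac{\nu_{h(\cdot)}^{N}(\sigma^{x,y}\eta')}{\nu_{h(\cdot)}^{N}(\eta')}\, d\nu_{h(\cdot)}^{N}(\eta').
\end{equation*}
All factors in the product measure $\nu_{h(\cdot)}^{N}$ corresponding to sites $z\notin\{x,y\}$ cancel, and one is left with
\begin{equation*}
\frac{\nu_{h(\cdot)}^{N}(\sigma^{x,y}\eta')}{\nu_{h(\cdot)}^{N}(\eta')}
\;=\;
\frac{h(\tfrac{x}{N})^{\eta'_{y}}(1-h(\tfrac{x}{N}))^{1-\eta'_{y}}\, h(\tfrac{y}{N})^{\eta'_{x}}(1-h(\tfrac{y}{N}))^{1-\eta'_{x}}}
     {h(\tfrac{x}{N})^{\eta'_{x}}(1-h(\tfrac{x}{N}))^{1-\eta'_{x}}\, h(\tfrac{y}{N})^{\eta'_{y}}(1-h(\tfrac{y}{N}))^{1-\eta'_{y}}}.
\end{equation*}
Since $\alpha\le h(z/N)\le\beta$ with $0<\alpha\le\beta<1$, every factor in the numerator and denominator lies in $[\alpha\wedge(1-\beta),\,\beta\vee(1-\alpha)]$, so the ratio is bounded uniformly in $x,y\in\Lambda_{N}$ and $\eta'\in\Omega_{N}$ by a constant $C=C(\alpha,\beta)$. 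Using $\int f\,d\nu_{h(\cdot)}^{N}=1$, this yields the first inequality.

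The second bound is handled identically with $T=\sigma^{x}$, which is also an involution. Only the factor at site $x$ survives in the ratio, giving
\begin{equation*}
\frac{\nu_{h(\cdot)}^{N}(\sigma^{x}\eta)}{\nu_{h(\cdot)}^{N}(\eta)}
\;=\;
\left(\frac{h(\tfrac{x}{N})}{1-h(\tfrac{x}{N})}\right)^{1-2\eta_{x}},
\end{equation*}
which again is bounded by $\max\bigl(\beta/(1-\beta),\,(1-\alpha)/\alpha\bigr)$, uniformly in $x$ and $\eta$. I do not anticipate any real obstacle here; the content is simply that $\nu_{h(\cdot)}^{N}$ is quasi-invariant under $\sigma^{x,y}$ and $\sigma^{x}$ with a Radon--Nikodym derivative bounded by a constant depending only on $\alpha,\beta$ (hence only on $h$, since $h$ is assumed to take values in $[\alpha,\beta]$).
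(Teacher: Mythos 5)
Your proposal is correct and follows essentially the same route as the paper's proof: change variables via the involution, bound the Radon--Nikodym derivative $\nu_{h(\cdot)}^{N}(\sigma^{x,y}\eta)/\nu_{h(\cdot)}^{N}(\eta)$ uniformly in $x,y,\eta$, and use $\int f\, d\nu_{h(\cdot)}^{N}=1$. If anything, your version is slightly more careful: the paper asserts the derivative is $1+\mathcal{O}(1/N)$ uniformly in $x,y$ (which, for pairs with $|x-y|$ of order $N$, holds only as a bound by a constant, not as a $1+\mathcal{O}(1/N)$ expansion), whereas you use only the uniform bound coming from $\alpha\le h\le\beta$, which is exactly what the lemma requires.
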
 
 
 \begin{proof}
Let us prove only the first bound since the proof of the second one is similar. We perform in the first integral above  the change of variable $\omega=\sigma^{x,y} \eta$ and we use that uniformly in $x,y \in \Lambda_N$ and $\omega$ we have 
$$\theta^{x,y} (\omega) =\cfrac{\nu_{h(\cdot)}^N (\sigma^{x,y} \omega) }{\nu_{h(\cdot)}^N (\omega)} =1 +\mathcal{O}(\tfrac{1}{N}).$$
By using the fact that $f$ is a density it is easy to conclude.
 \end{proof}
 
Now, let us look at some consequences of these lemmas. We start with the bulk generator $L_N^0$ given in \eqref{generators}.
\begin{cor}
\label{lemmabulk}
There exists a constant $C>0$ (independent of $f$ and $N$) such that
\begin{eqnarray*}
\left\langle L_{N}^0\sqrt{f},\sqrt{f}\right \rangle_{\nu_{h(\cdot)}^N}  &\le&  -\dfrac{1}{4}D_{N}^{0}(\sqrt{f},\nu_{h(\cdot)}^N) + C\sum_{x,y\in\Lambda_N}p(y-x)\Big(h(\tfrac xN)-h(\tfrac yN)\Big)^2
\end{eqnarray*}
for any density $f$ with respect to $\nu_{h(\cdot)}^N$.
\end{cor}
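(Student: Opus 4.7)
The plan is to write $\langle L_N^0 \sqrt{f}, \sqrt{f}\rangle_{\nu_{h(\cdot)}^N}$ as the sum
\[
\frac{1}{2}\sum_{x,y\in\Lambda_N} p(y-x) \int_{\Omega_N} \bigl[\sqrt{f(\sigma^{x,y}\eta)} -\sqrt{f(\eta)}\bigr]\sqrt{f(\eta)}\; d\nu_{h(\cdot)}^N,
\]
and to apply Lemma~\ref{lemmaleft0} term by term with $T=\sigma^{x,y}$ and $c\equiv 1$. The leading contribution sums to exactly $-\frac{1}{4}D_N^0(\sqrt{f},\nu_{h(\cdot)}^N)$, which produces the diffusive term in the claimed inequality. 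The remaining problem is therefore to control the error term
\[
\frac{1}{32}\sum_{x,y\in\Lambda_N} p(y-x) \int_{\Omega_N}\Bigl[1-\frac{\nu_{h(\cdot)}^N(\sigma^{x,y}\eta)}{\nu_{h(\cdot)}^N(\eta)}\Bigr]^2 \bigl[\sqrt{f(\sigma^{x,y}\eta)}+\sqrt{f(\eta)}\bigr]^2 d\nu_{h(\cdot)}^N.
\]

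The next step is the key pointwise estimate on the Radon--Nikodym ratio. Since $\nu_{h(\cdot)}^N$ is a product Bernoulli measure with marginals $h(\tfrac{x}{N})$, a direct computation gives that the ratio equals $1$ on the set $\{\eta_x=\eta_y\}$ while on $\{\eta_x\neq\eta_y\}$ it equals $\frac{h(y/N)(1-h(x/N))}{h(x/N)(1-h(y/N))}$ or its inverse. In either nontrivial case,
\[
1-\frac{\nu_{h(\cdot)}^N(\sigma^{x,y}\eta)}{\nu_{h(\cdot)}^N(\eta)} = \pm\,\frac{h(\tfrac{x}{N})-h(\tfrac{y}{N})}{h(\tfrac{x}{N})(1-h(\tfrac{y}{N}))}\;\text{ or }\; \pm\,\frac{h(\tfrac{x}{N})-h(\tfrac{y}{N})}{h(\tfrac{y}{N})(1-h(\tfrac{x}{N}))}.
\]
Since $\alpha\le h \le \beta$ with $\alpha,\beta\in(0,1)$, the denominators are bounded away from $0$ by a constant depending only on $\alpha,\beta$, which yields uniformly in $x,y,\eta$
\[
\Bigl[1-\frac{\nu_{h(\cdot)}^N(\sigma^{x,y}\eta)}{\nu_{h(\cdot)}^N(\eta)}\Bigr]^2 \le C(\alpha,\beta)\,\bigl(h(\tfrac{x}{N})-h(\tfrac{y}{N})\bigr)^2.
\]

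Finally, expanding $(a+b)^2 \le 2(a^2+b^2)$ on the square of the sum of square roots of $f$, and invoking Lemma~\ref{lem:densaf} which furnishes a uniform bound $\int f(\sigma^{x,y}\eta)\,d\nu_{h(\cdot)}^N \le C$ (together with $\int f\,d\nu_{h(\cdot)}^N=1$), the error term is dominated by
\[
C'(\alpha,\beta)\sum_{x,y\in\Lambda_N} p(y-x)\bigl(h(\tfrac{x}{N})-h(\tfrac{y}{N})\bigr)^2,
\]
which is the right-hand side of the corollary. The only nontrivial ingredients are the two preceding lemmas plus the explicit Radon--Nikodym computation; the main obstacle is merely to verify that the denominator in the Radon--Nikodym ratio stays bounded away from zero, which is immediate from the assumption $\alpha\le h\le\beta$ with $\alpha,\beta\in(0,1)$, so the proof is essentially a concatenation of the lemmas established just before.
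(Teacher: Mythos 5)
Your proposal is correct and follows essentially the same route as the paper: the same decomposition over pairs, the same application of Lemma~\ref{lemmaleft0} with $c\equiv 1$ and $T=\sigma^{x,y}$ (yielding exactly $-\tfrac14 D_N^0$ plus the $\tfrac{1}{32}$-error term), and the same use of Lemma~\ref{lem:densaf} together with the bound $|\theta^{x,y}(\eta)-1|^2\lesssim (h(\tfrac xN)-h(\tfrac yN))^2$. In fact you supply more detail than the paper, which merely asserts this last estimate, whereas you verify it by the explicit Radon--Nikodym computation with denominators bounded below by $\alpha(1-\beta)$.
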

\begin{proof}
To prove this we note that 
\begin{equation*}
\left\langle L_{N}^0\sqrt{f},\sqrt{f} \right\rangle_{\nu_{h(\cdot)}^N} =\frac{1}{2} \sum_{x,y\in\Lambda_N}p(y-x)\; \left\langle \Big[ \sqrt{f(\sigma^{x,y}\eta)} -\sqrt{f(\eta)}\Big]\; ,\;\sqrt{f (\eta)} \right\rangle_{\nu_{h(\cdot)}^N}.
\end{equation*}
Now, by Lemma \ref{lemmaleft0} with $c \equiv1$, $T=\sigma^{x,y}$,  and Lemma \ref{lem:densaf} last expression is bounded from above by
\begin{eqnarray*}
 -\dfrac{1}{4}D_{N}^{0}(\sqrt{f},\nu_{h(\cdot)}^N) + C\sum_{x,y\in\Lambda_N}p(y-x)\Big(h\big(\tfrac xN \big)-h\big(\tfrac yN\big)\Big)^2,
\end{eqnarray*}
because $| \theta^{x,y} (\eta) -1 |^2 \lesssim (h(x/N) -h (y/N))^2$.
\end{proof}
Now we look at the generators of the reservoirs given in \eqref{generators}.
  
\begin{cor}
\label{lemmaleft1} 
Let $\theta \in \RR$ be fixed. There exists a constant $C>0$ (independent of $f$ and $N$) such that
\begin{equation}
\begin{split}
& \langle L_{N}^\ell\sqrt{f},\sqrt{f} \rangle_{\nu_{h(\cdot)}^N} \le  -\dfrac{1}{4}D_{N}^{\ell}(\sqrt{f},\nu_{h(\cdot)}^N) + \frac{C\kappa }{N^{\theta}}\sum_{x\in\Lambda_N}r_N^{-}(\tfrac{x}{N})\Big(h(\tfrac xN)-\alpha\Big)^2, \\
 & \langle L_{N}^r\sqrt{f},\sqrt{f} \rangle_{\nu_{h(\cdot)}^N} \le  -\dfrac{1}{4}D_{N}^{r}(\sqrt{f},\nu_{h(\cdot)}^N) + \frac{C\kappa}{N^{\theta}}\sum_{x\in\Lambda_N}r_N^{+}(\tfrac{x}{N})\Big(h(\tfrac xN)-\beta\Big)^2
\end{split}
\end{equation}
for any density $f$ with respect to $\nu_{h(\cdot)}^N$.
\end{cor}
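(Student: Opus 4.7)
The plan is to mimic the proof of Corollary \ref{lemmabulk} but with the transformation $T=\sigma^{x}$ and the non-constant weight $c(\eta)=c_{x}(\eta;\alpha)$, which is where the extra error term proportional to $(h(x/N)-\alpha)^{2}$ will come from. First, I would rewrite
\[
\langle L_{N}^{\ell}\sqrt f,\sqrt f\rangle_{\nu_{h(\cdot)}^{N}}=\frac{\kappa}{N^{\theta}}\sum_{x\in\Lambda_{N}}r_{N}^{-}(\tfrac{x}{N})\int c_{x}(\eta;\alpha)\bigl[\sqrt{f(\sigma^{x}\eta)}-\sqrt{f(\eta)}\bigr]\sqrt{f(\eta)}\,d\nu_{h(\cdot)}^{N},
\]
using the identity $\sum_{y\le 0}p(x-y)=r_{N}^{-}(x/N)$.

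Next I would apply Lemma~\ref{lemmaleft0} term by term with $T=\sigma^{x}$, $c=c_{x}(\cdot;\alpha)$, $\mu=\nu_{h(\cdot)}^{N}$. The first piece in \eqref{use_comp} summed with the prefactor $\frac{\kappa}{N^{\theta}}r_{N}^{-}(x/N)$ is, by definition, $-\tfrac{1}{4}D_{N}^{\ell}(\sqrt f,\nu_{h(\cdot)}^{N})$, so only the second (error) piece requires work.

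To control the error piece, I would compute explicitly the quantity $c_{x}(\eta;\alpha)-c_{x}(\sigma^{x}\eta;\alpha)\nu_{h(\cdot)}^{N}(\sigma^{x}\eta)/\nu_{h(\cdot)}^{N}(\eta)$ in the two cases $\eta_{x}=0$ and $\eta_{x}=1$. A direct calculation — using that the Radon--Nikodym ratio equals $(1-h(x/N))/h(x/N)$ or its inverse — shows that in both cases this quantity is of the form $(h(x/N)-\alpha)$ divided by a factor bounded below in terms of $\alpha,\beta$ (since $\alpha\le h(q)\le\beta$, both $h$ and $1-h$ stay away from $0$). Since $c_{x}(\eta;\alpha)\ge \alpha\wedge(1-\alpha)>0$, the factor $1/c_{x}(\eta;\alpha)$ in \eqref{use_comp} is also uniformly bounded. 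I would then use $(\sqrt{f(\sigma^{x}\eta)}+\sqrt{f(\eta)})^{2}\le 2[f(\sigma^{x}\eta)+f(\eta)]$ together with Lemma~\ref{lem:densaf} and $\int f\,d\nu_{h(\cdot)}^{N}=1$ to conclude that the integral in the error piece is at most $C(h(x/N)-\alpha)^{2}$ for some constant $C=C(\alpha,\beta)$. Summing with the weight $\frac{\kappa}{N^{\theta}}r_{N}^{-}(x/N)$ yields exactly the claimed bound.

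The bound for $L_{N}^{r}$ is completely analogous: it uses $\sum_{y\ge N}p(x-y)=r_{N}^{+}(x/N)$ and the parameter $\beta$ in place of $\alpha$, and the computation is the same with $h(x/N)-\beta$ replacing $h(x/N)-\alpha$. The only mildly delicate step is the case analysis giving the bound $|c_{x}(\eta;\alpha)-c_{x}(\sigma^{x}\eta;\alpha)\nu_{h(\cdot)}^{N}(\sigma^{x}\eta)/\nu_{h(\cdot)}^{N}(\eta)|\lesssim |h(x/N)-\alpha|$; everything else is routine and mirrors the proof of Corollary~\ref{lemmabulk}.
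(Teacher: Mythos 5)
Your proposal is correct and follows essentially the same route as the paper: both apply Lemma \ref{lemmaleft0} with $T=\sigma^{x}$, $c=c_{x}(\cdot;\alpha)$ and use Lemma \ref{lem:densaf}, identifying the quadratic piece with $-\tfrac14 D_N^{\ell}$ and bounding the error via the estimate $|c_{x}(\eta;\alpha)-c_{x}(\sigma^{x}\eta;\alpha)\nu_{h(\cdot)}^{N}(\sigma^{x}\eta)/\nu_{h(\cdot)}^{N}(\eta)|\lesssim|h(\tfrac xN)-\alpha|$. The only difference is that you spell out the case analysis ($\eta_x=0$ versus $\eta_x=1$) and the uniform lower bounds on $c_x$, $h$, and $1-h$, which the paper leaves implicit.
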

\begin{proof}
We present the proof for the first inequality but we note that the proof of the second one is analogous. 
First observe that 
\begin{equation*}
\left\langle L_{N}^\ell\sqrt{f},\sqrt{f} \right\rangle_{\nu_{h(\cdot)}^N} =\frac{\kappa}{N^\theta}\sum_{x\in\Lambda_N}\sum_{y\leq0}p(y-x)\left\langle c_{x}(\eta;\alpha) \Big[\sqrt{f(\sigma^{x}\eta)} -\sqrt{f(\eta)}\Big]\; , \;\sqrt{f (\eta)} \right\rangle_{\nu_{h(\cdot)}^N}.
\end{equation*}  
Now, by using Lemma \ref{lemmaleft0} with $c (\eta) = c_{x}(\eta;\alpha)$, $T = \sigma^{x}$ and Lemma \ref{lem:densaf}, last expression is bounded from above by
\begin{eqnarray*}
 -\dfrac{1}{4}D_{N}^{\ell}(\sqrt{f},\nu_{h(\cdot)}^N) + \frac{C\kappa}{N^\theta}\sum_{x\in\Lambda_N}\sum_{y\leq0}p(y-x)\Big(h(\tfrac xN)-\alpha\Big)^2.
\end{eqnarray*}
 \end{proof}
From the two previous corollaries the claim (\ref{dir_est}) follows.

\subsection{Replacement Lemmas}
\begin{lem}
 \label{bound}
For any density $f$ with respect to $\nu_{h(\cdot)}^N$,  any $x\in \Lambda_{N}$ and any positive constant $A_x$, we have that
$$\left\vert \left\langle t_{x}^{\alpha},f\right\rangle_{\nu_{h(\cdot)}^{N}} \right\vert \; \lesssim\; \dfrac{1}{A_{x}}  I_{x}^{\alpha}(\sqrt{f},\nu_{h(\cdot)}^{N})+ A_{x}+[h(\tfrac{x}{N})- \alpha],$$
where $t_x^{\alpha}(\eta)=\eta_x-\alpha$. The same result holds if $\alpha$ is replaced by $\beta$.
\end{lem}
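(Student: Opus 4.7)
The strategy is the by now standard ``flip-symmetrization'' trick for boundary/reservoir Dirichlet form estimates, adapted to the generator $L_N^\ell$ (analogously for $L_N^r$). Write $h=h(x/N)$ and abbreviate $\nu=\nu_{h(\cdot)}^N$. The starting point is the decomposition
\begin{equation*}
\langle t_x^{\alpha},f\rangle_{\nu}=\int (\eta_x-\alpha)\sqrt{f}(\eta)\bigl[\sqrt{f}(\eta)-\sqrt{f}(\sigma^x\eta)\bigr]\, d\nu+\int (\eta_x-\alpha)\sqrt{f}(\eta)\sqrt{f}(\sigma^x\eta)\, d\nu,
\end{equation*}
obtained by splitting the second factor of $f(\eta)=\sqrt{f}(\eta)\cdot\sqrt{f}(\eta)$ according to whether one replaces $\sqrt{f}(\eta)$ by $\sqrt{f}(\sigma^x\eta)$.

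For the first integral (the \emph{gradient term}), I would apply Young's inequality $|ab|\le \tfrac{1}{2A_x} b^2+\tfrac{A_x}{2}a^2$ after inserting the harmless factor $\sqrt{c_x(\eta;\alpha)}/\sqrt{c_x(\eta;\alpha)}$. This produces the Dirichlet-form contribution $\tfrac{1}{2A_x}I_x^{\alpha}(\sqrt{f},\nu)$ together with the remainder
\begin{equation*}
\frac{A_x}{2}\int \frac{(\eta_x-\alpha)^2}{c_x(\eta;\alpha)}\, f\, d\nu.
\end{equation*}
A direct case analysis ($\eta_x\in\{0,1\}$) shows $(\eta_x-\alpha)^2/c_x(\eta;\alpha)\le \max(\alpha,1-\alpha)\le 1$, so this remainder is bounded above by $\tfrac{A_x}{2}$ since $f$ is a density.

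For the second integral (the \emph{symmetric term}), the key step is a change of variables $\omega=\sigma^x\eta$ that produces the Radon--Nikodym factor $\nu(\sigma^x\eta)/\nu(\eta)$, which equals $(1-h)/h$ if $\eta_x=1$ and $h/(1-h)$ if $\eta_x=0$. Averaging the original integral with its transform, the symmetric term equals
\begin{equation*}
\frac{1}{2}\int \sqrt{f(\eta)}\sqrt{f(\sigma^x\eta)}\,\Bigl[(\eta_x-\alpha)+((1-\eta_x)-\alpha)\,\tfrac{\nu(\sigma^x\eta)}{\nu(\eta)}\Bigr]\, d\nu.
\end{equation*}
A one-line computation in each case $\eta_x\in\{0,1\}$ shows the bracketed quantity equals $(h-\alpha)/h$ or $(h-\alpha)/(1-h)$; since $\alpha\le h\le\beta$ with $\alpha,\beta\in(0,1)$, it is bounded in absolute value by a constant multiple of $h-\alpha=h(x/N)-\alpha$. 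By AM-GM, $\sqrt{f(\eta)}\sqrt{f(\sigma^x\eta)}\le \tfrac12 f(\eta)+\tfrac12 f(\sigma^x\eta)$, and Lemma~\ref{lem:densaf} gives $\int f(\sigma^x\eta)\,d\nu\lesssim 1$, so the symmetric term is controlled by a constant times $h(x/N)-\alpha$.

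Collecting the three contributions yields the claimed bound. The only delicate point is the symmetric term, where one must verify that the cancellation between $(\eta_x-\alpha)$ and its $\sigma^x$-flip, weighted by the Bernoulli Radon--Nikodym factor, produces exactly a multiple of $h(x/N)-\alpha$; this is the algebraic miracle that makes the lemma useful when applied later with $h$ matching the boundary values of the reservoirs. The argument for $\beta$ in place of $\alpha$ is identical, replacing $c_x(\eta;\alpha)$ by $c_x(\eta;\beta)$ throughout.
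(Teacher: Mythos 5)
Your proof is correct and follows essentially the same route as the paper's: both split $f$ into a flip-gradient part and a flip-symmetric part, apply Young's inequality weighted by the rate $c_x(\eta;\alpha)$ to produce $\tfrac{1}{A_x}I_x^{\alpha}(\sqrt f,\nu)+A_x$, and exploit the cancellation $(1-\alpha)h-\alpha(1-h)=h-\alpha$ (together with $\alpha\le h\le\beta$ and Lemma \ref{lem:densaf}) to control the symmetric part by $h(\tfrac{x}{N})-\alpha$. The only cosmetic differences are that you decompose at the level of $\sqrt f$ rather than $f$ and extract the factor $h-\alpha$ via a change of variables with the Radon--Nikodym ratio $\nu(\sigma^x\eta)/\nu(\eta)$, whereas the paper expands the sum over $\eta_x\in\{0,1\}$ directly using the product structure of $\nu_{h(\cdot)}^N$.
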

\begin{proof}
By a simple computation  we have that:
\begin{eqnarray}
\label{Rep.sec.1}
\nonumber
\displaystyle \left\vert \left\langle t_{x}^{\alpha},f\right\rangle_{\nu_{h(\cdot)}^{N}} \right\vert
&\leq & \dfrac{1}{2}\left\vert \int t_x^\alpha (\eta)(f(\eta)-f(\sigma^x\eta)) \; d\nu_{h(\cdot)}^{N}\right\vert \\ 
&+& \dfrac{1}{2} \left\vert \int  [f(\sigma^{x}\eta)+f(\eta)]t^\alpha_x(\eta) \; d\nu_{h(\cdot)}^{N} \right\vert,
\end{eqnarray}
where $\sigma^{x}$ is the flip given in  \eqref{tranformations}.
By Young's inequality, using the fact that $(a-b)=(\sqrt a -\sqrt b)(\sqrt a+\sqrt b)$ for all $a,b\ge 0$ and Lemma \ref{lem:densaf}, the first term at the right side of (\ref{Rep.sec.1}) is bounded from above, for any positive constant $A_{x}$, by
\begin{equation*}
\begin{split}
& \dfrac{A_{x}}{4}\int \dfrac{(t^\alpha_{x}(\eta))^2}{c_{x}(\eta;\alpha)}\left(\left[ \sqrt{f(\sigma^{x}\eta)}\right]+\left[ \sqrt{f(\eta)}\right]\right)^{2}  d\nu_{h(\cdot)}^{N}+\dfrac{I_{x}^{\alpha}(\sqrt{f},\nu_{h(\cdot)}^{N})}{4A_{x}}\\
&\lesssim \; A_x +  \dfrac{I_{x}^{\alpha}(\sqrt{f},\nu_{h(\cdot)}^{N})}{A_{x}}.
\end{split}
\end{equation*}
Now, we look at the second term at the right hand side of (\ref{Rep.sec.1}). By using the fact that $\nu^N_{h(\cdot)}$ is product and denoting by $\bar \eta$ the configuration $\eta$ removing its value at $x$ so that $(\eta_x,\bar\eta)=\eta$ , we have that the second term at the right side of (\ref{Rep.sec.1}) is equal to
\begin{equation*}
\begin{split}
&\frac{1}{2}\big\vert \sum_{\bar{\eta} }\left( (1-\alpha)(f(1,\bar\eta)+f(0,\bar\eta))\nu_{h(\cdot)}^{N}(\eta_{x}=1)\right.\\
& -\left.\alpha(f(0,\bar\eta)+f(1,\bar\eta))\nu_{h(\cdot)}^{N}(\eta(x)=0)\right)\nu_{h(\cdot)}^{N}(\bar\eta)\big\vert\\
=&\frac{1}{2}\Big\vert\sum_{\bar{\eta}}\Big(h(\tfrac xN)-\alpha\Big)(f(0,\bar\eta)+f(1,\bar\eta))\nu_{h(\cdot)}^{N}(\bar\eta)\Big\vert\\
\lesssim &\, \Big(h(\tfrac xN)-\alpha\Big)\sum_{\bar\eta}h(\tfrac xN)f(1,\bar\eta)\nu_{h(\cdot)}^{N}(\bar\eta)+\Big(1-h(\tfrac xN)\Big)f(0,\bar\eta)\nu_{h(\cdot)}^{N}(\bar\eta)\\
 =&\Big(h(\tfrac xN)-\alpha\Big)\sum_{\eta \in \Omega_N} f(\eta) \nu_{h(\cdot)}^N (\eta) = \Big(h(\tfrac xN)-\alpha\Big)
\end{split}
\end{equation*}
because $\max_{x\in \Lambda_N} \Big\{\frac{1}{2h\left(\tfrac{x}{N}\right)},\frac{1}{2\left(1-h\left(\tfrac{x}{N}\right)\right)}\Big\}$ is bounded from above by a constant depending only on $\alpha$ and $\beta$. Above $f(1,\bar\eta)$ (resp. $f(0,\bar \eta)$) means that we are computing $f(\eta)$ with  $\eta_x=1$ (resp. $\eta_x=0$).
 \end{proof}
 \begin{lem} 
\label{Rep-Dirichlet1}
Let $\theta>1$. For any $t>0$, we have that 
\begin{equation}
\label{eq:A1234}
\begin{split}
{\limsup_{N
\to\infty}}\,\bb E_{\mu_N}\left[\Big|\int_0^t N^{1-\theta} \sum _{x \in \Lambda_{N}}Gr^{-}_{N}(\tfrac{x}{N})(\eta_x(sN^2)-\alpha)\, ds\Big|\right]=0,\\
{\limsup_{N
\to\infty}}\,
\bb E_{\mu_N}\left[\Big|\int_0^t N^{1-\theta} \sum _{x \in \Lambda_{N}}Gr^{+}_{N}(\tfrac{x}{N})(\eta_x(sN^2)-\beta)\, ds\Big|\right]=0,
\end{split}
\end{equation} 
 {for any bounded $G:\RR\rightarrow \RR$.}
\end{lem}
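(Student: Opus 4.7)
The plan is to establish this via a direct deterministic bound on the integrand, bypassing the usual machinery of Replacement Lemmas entirely. In the regime $\theta > 1$ the prefactor $N^{1-\theta}$ vanishes as $N\to\infty$, so I expect that the only thing needed is to show that the spatial sum appearing in the integrand is bounded uniformly in $N$, in $\eta$ and in $s$.

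First I would pull the absolute value inside the time integral by the triangle inequality, then inside the sum over $x \in \Lambda_N$. Since $G$ is bounded and $|\eta_x(sN^2) - \alpha| \le 1 + \alpha$ deterministically (the state space is $\{0,1\}$), the integrand for the first quantity in \eqref{eq:A1234} is pointwise dominated by
\begin{equation*}
(1+\alpha)\|G\|_\infty \, N^{1-\theta} \sum_{x \in \Lambda_N} r_N^{-}(\tfrac{x}{N}),
\end{equation*}
independently of $\eta$. The key input is then \eqref{mean}, which tells me $\sum_{x \in \Lambda_N} r_N^{-}(x/N) \to m = \sum_{y \geq 1} y p(y) < \infty$, and in particular this sum is uniformly bounded in $N$ (here I use that $\gamma > 2$ so that $m$ is finite). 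Consequently the whole expression inside the expectation is bounded by $C(G,\alpha,t) \, N^{1-\theta}$, which tends to zero for $\theta > 1$. The second identity in \eqref{eq:A1234} is handled identically by replacing $\alpha$ with $\beta$ and $r_N^{-}$ with $r_N^{+}$, invoking the second convergence in \eqref{mean}.

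I do not anticipate any real obstacle here, since no sharp estimate and no replacement of microscopic occupation variables by empirical averages is required: the statement is genuinely an upper bound by the trivial $L^\infty$ estimate, and it is the hypothesis $\theta > 1$ together with the finiteness of the first moment $m$ (guaranteed by $\gamma > 2$) that does all the work. The reason this crude bound succeeds, in contrast to the more delicate regimes $\theta \le 1$ where genuine replacement arguments (Lemma \ref{Rep-Neumann} and its relatives) are needed, is precisely that the scaling $N^{1-\theta}$ absorbs any $O(1)$ fluctuation coming from the boundary interaction.
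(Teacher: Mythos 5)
Your proof is correct, but it takes a genuinely different — and more elementary — route than the paper. Your key observation, that for $\theta>1$ the statement follows from a purely deterministic $L^\infty$ bound, is valid: since $|\eta_x(sN^2)-\alpha|\le 1$ and, by Fubini,
\begin{equation*}
\sum_{x\in\Lambda_N} r_N^-(\tfrac{x}{N})=\sum_{x=1}^{N-1}\sum_{y\ge x}p(y)=\sum_{y\ge 1}\min(y,N-1)\,p(y)\;\le\;\sum_{y\ge 1}y\,p(y)=m<\infty
\end{equation*}
uniformly in $N$ (this is the content of \eqref{mean}, with $\gamma>2$ guaranteeing $m<\infty$), the random variable inside the expectation is bounded by $t\,\|G\|_\infty\, m\, N^{1-\theta}$, which vanishes as $N\to\infty$ precisely because $\theta>1$; no probabilistic input is needed. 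The paper instead runs the full relative-entropy/Feynman--Kac machinery: it fixes a Lipschitz reference profile $h(\cdot)$ with $h(0)=\alpha$, $h(1)=\beta$, applies the entropy and Jensen inequalities with respect to $\nu^N_{h(\cdot)}$, then Feynman--Kac, and controls the resulting variational expression via Lemma \ref{bound} (which trades $\langle t_x^\alpha,f\rangle_{\nu^N_{h(\cdot)}}$ for the Dirichlet-form terms $I_x^\alpha$) together with the estimate \eqref{dir_est_lip} and the moment bounds \eqref{estimates_on_p} and \eqref{mean}. For the lemma as stated (bounded $G$, $\theta>1$) that machinery is not necessary — indeed the paper itself uses exactly your scaling argument at the heuristic level in Section \ref{subsec:thetage1}. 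What the heavier route buys is chiefly methodological uniformity: it is the same entropy/Feynman--Kac template that is unavoidable for the neighboring replacement lemmas (Lemmas \ref{Rep-Neumann}, \ref{Rep-Dirichlet2} and \ref{repla_novo}), where the quantity to be estimated is not deterministically small and one must genuinely exploit the dynamics through the Dirichlet form. Note also the limitation you correctly anticipate: your argument cannot be pushed below $\theta=1$ for merely bounded $G$, since then $N^{1-\theta}\sum_{x}r_N^-(\tfrac{x}{N})$ no longer vanishes; in the regimes $\theta<1$ the smallness must come either from compactly supported test functions or from genuine replacement estimates. For the statement as given, however, both proofs are complete, and yours is the more economical one.
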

\begin{proof}
We present the proof for the first term, but we note that  the proof for the second term is completely analogous. 

We start by  fixing a Lipschitz profile $h(\cdot)$ such that $h(0)=\alpha$ and $h(1)=\beta$. By the entropy and Jensen's inequalities, for any $B>0$,  the first expectation of (\ref{eq:A1234}) is bounded from above by
\begin{equation}\label{eq:Ent+FK}
\begin{split}
\dfrac{H(\mu _{N}\vert \nu_{h(\cdot)}^{N}) }{BN}
+ \dfrac{1}{BN}\log \mathbb{E}_{\nu_{h(\cdot)}^{N}}\left[ e^{BN\vert \int _{0}^{t}N^{1-\theta} \sum_{x\in \Lambda_{N}} Gr^{-}_{N}\big(\tfrac{x}{N}\big)(\eta_x(sN^2)-\alpha) ds\vert}\right].\\
\end{split}
\end{equation}
We can remove the absolute value inside the exponential since $e^{\vert x\vert} \leq e^{x}+e^{-x}$ and  \begin{equation}\label{Log bounded}
\limsup_{N\rightarrow\infty} N^{-1}\log(a_{N}+b_{N}){=} \max \left\lbrace \limsup_{N\rightarrow\infty} N^{-1}\log(a_{N}), \limsup_{N\rightarrow\infty} N^{-1}\log(b_{N}) \right\rbrace.
\end{equation}
By \eqref{H} and Feynman-Kac's formula,  \eqref{eq:Ent+FK}  is bounded from above by 
\begin{eqnarray}\label{FK 1}\nonumber
\frac{C_0}{B}+ t \sup _{f}\Big\{ N^{1-\theta}\sum _{x \in \Lambda_{N}}\left\vert G r^{-}_{N}(\tfrac{x}{N})\langle t_{x}^{\alpha},f\rangle_{ \nu_{h(\cdot)}^{N}}\right\vert  + \dfrac{N}{B} \left\langle L_{N}\sqrt{f},\sqrt{f}\right\rangle_{ \nu_{h(\cdot)}^{N}}\Big\},
\end{eqnarray}
where the supremum is carried over all the densities $f$ with respect to $\nu_{h(\cdot)}^N$. We recall that $t_{x}^{\alpha}(\eta) = \eta_{x} -\alpha$. 
From  Lemma \ref{bound}  we have that there exists a constant $C:=C(\alpha,\beta, \gamma)>0$ such that
\begin{equation}
\label{eq:intermezzo}
\begin{split}
& N^{1-\theta}\sum _{x \in \Lambda_{N}}\left\vert (Gr^{-}_{N})(\tfrac{x}{N})\langle t_{x}^{\alpha},f\rangle_{ \nu_{h(\cdot)}^{N}}\right\vert \\
& \leq  C N^{1-\theta} \sum_{x\in\Lambda_{N}}\vert (Gr_N^-)(\tfrac{x}{N})\vert  \left[A_{x}+ \tfrac{I_{x}^{\alpha}(\sqrt{f},\nu_{h(\cdot)}^{N})}{A_{x}} + \tfrac{x}{N}\right]\\
&\leq   4C^2 \kappa^{-1} B N^{1-\theta} \sum_{x\in\Lambda_N}G^2(\tfrac{x}{N})r^{-}_{N}(\tfrac{x}{N})+\dfrac{N}{4B} D_{N}^{\ell}(\sqrt{f},\nu_{h(\cdot)}^{N})+ CN^{-\theta}\sum_{x\in\Lambda_N}\vert G(\tfrac{x}{N})\vert r^{-}_{N}(\tfrac{x}{N})x.
\end{split}
\end{equation}
The last inequality is obtained by choosing $A_{x}=4 \kappa^{-1} C\vert G(\tfrac{x}{N})\vert B$. Recall  \eqref{dir_est_lip}. 


{Since $\theta >1$ and the function $G$ is  bounded, we use (\ref{eq:intermezzo}) and \eqref{dir_est_lip} and we estimate from above (\ref{eq:Ent+FK})  by a constant times 
\begin{equation}
 \begin{split}
\frac{1}{B} + \frac{1}{BN^{1+\theta}}\sum_{x\in\Lambda_N}\Big\{x^2r_N^-(\tfrac{x}{N})+\big(x-N \big)^2r^+_N(\tfrac{x}{N})\Big\}
+ B N^{1-\theta}\sum_{x\in\Lambda_N}r^{-}_{N}(\tfrac{x}{N})+ N^{-\theta} \sum_{x\in\Lambda_N}r^{-}_{N}(\tfrac{x}{N})x,
\end{split}
\end{equation}
which, by 
\begin{equation}\label{estimates_on_p}
\sum_{x\in\Lambda_N}xr^{-}_{N}(\tfrac{x}{N}) 
\lesssim \; 
\begin{cases}
N^{3-\gamma}, \quad \gamma \in (2,3),\\
\log N, \quad \gamma=3,\\
1, \quad \gamma >3,
\end{cases}
\end{equation}
 and (\ref{mean}),  goes to zero as $N \to \infty$ and then $B\to\infty$.}
\end{proof}

Let us define  for $\ell\in\mathbb N$ the following empirical densities 
\begin{equation} \label{boxes}
\overrightarrow{\eta}_{0}^{\ell}:=\dfrac{1}{\ell}\sum_{y=1}^{\ell}\eta_{y} \qquad\text{and} \qquad  \overleftarrow{\eta}_{N}^{\ell}:=\dfrac{1}{\ell}\sum_{y=N-1 -\ell}^{N-1}\eta_{y}.
\end{equation}
 
 \begin{lem} \label{Rep-Neumann}
For any $t>0$ and any   $ \theta \geq 1$ we have that
\begin{equation*}
\begin{split}
&{\limsup_{\ve \to 0}}\,{\limsup _{N\rightarrow \infty}}\,\bb E_{\mu_N}\left[\Big|\int_0^t\sum_{x\in\Lambda_N}\Theta_x^- \, (\eta_x(sN^2)-\overrightarrow{\eta}_{0}^{\ve N}(sN^2))\, ds\Big|\right] =0,\\
&{\limsup_{\ve \to 0}}\,{\limsup _{N\rightarrow \infty}}\,\bb E_{\mu_N}\left[\Big|\int_0^t\sum_{x\in\Lambda_N}\Theta_x^+ \, (\eta_{x}(sN^2)-\overleftarrow{\eta}_{N}^{\ve N}(sN^2))\, ds\Big|\right] =0.
\end{split}
\end{equation*}
\end{lem}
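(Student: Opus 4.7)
The plan is to focus on the first identity; the second one follows by a completely symmetric argument based on $\Theta_x^+$ and $\overleftarrow{\eta}_{N}^{\ve N}$. The key structural observation I would use is that the weight $\Theta_x^-$ concentrates near the left boundary: from $p(z)\lesssim |z|^{-1-\gamma}$ one has $\Theta_x^-\lesssim x^{1-\gamma}$, and by \eqref{eq:sum236} the total mass satisfies $\sum_{x\in\Lambda_N}\Theta_x^-\to\sigma^2/2$. Accordingly, I would split the sum at $x=\ve N$ and handle the tail $x>\ve N$ pathwise, using $|\eta_x-\overrightarrow{\eta}_{0}^{\ve N}|\le 2$ and the estimate $\sum_{x>\ve N}\Theta_x^-\lesssim (\ve N)^{2-\gamma}\to 0$ (valid for fixed $\ve$ as $N\to\infty$, since $\gamma>2$).

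For the contribution of $x\le \ve N$, I would apply the entropy inequality with reference measure the Bernoulli product $\nu_\alpha$ of constant parameter $\alpha$, followed by the Feynman-Kac inequality, exactly in the spirit of the proof of Lemma \ref{Rep-Dirichlet1}. This reduces the estimate to controlling, uniformly over $\nu_\alpha$-densities $f$, the quantity
\[
\sum_{x\le \ve N}\Theta_x^-\,\langle \eta_x-\overrightarrow{\eta}_{0}^{\ve N},f\rangle_{\nu_\alpha}+\frac{N}{B}\langle L_N\sqrt f,\sqrt f\rangle_{\nu_\alpha}.
\]
The second summand is handled by the constant-profile estimate \eqref{dir_est_const}, which bounds it by $-\tfrac{N}{4B}D_N(\sqrt f,\nu_\alpha)+\tfrac{C\kappa}{B}N^{1-\theta}$; the error $N^{1-\theta}$ is uniformly bounded since $\theta\ge 1$. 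The task thus reduces to dominating the first summand by a fraction of $\tfrac{N}{4B}D_N(\sqrt f,\nu_\alpha)$ plus an error that vanishes in the iterated limit.

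To accomplish this, I would telescope $\eta_x-\eta_y=\sum_{i=y}^{x-1}(\eta_{i+1}-\eta_i)$ for $y\le x$ and apply the standard one-bond estimate for the product measure, $|\langle \eta_{i+1}-\eta_i,f\rangle_{\nu_\alpha}|\le \tfrac{1}{2A}I_{i,i+1}(\sqrt f,\nu_\alpha)+2A$, obtained by Cauchy-Schwarz and the $\sigma^{i,i+1}$-invariance of $\nu_\alpha$. Reorganizing the resulting triple sum by the bond index $i$ yields a bound of the form
\[
\sum_{i=1}^{\ve N-1}c_i\Big[\tfrac{1}{2A}I_{i,i+1}(\sqrt f,\nu_\alpha)+2A\Big],\qquad c_i:=\sum_{\substack{x,y\le \ve N\\ \min(x,y)\le i<\max(x,y)}}\frac{\Theta_x^-}{\ve N}.
\]
A direct computation gives $c_i\le\sigma^2$ uniformly in $i$ and $N$, thanks to the uniform bound $\sum_x\Theta_x^-\le C$. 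Choosing $A=\mathcal{O}(B/N)$ and using $I_{i,i+1}\le 2D_N/p(1)$ (where $p(1)>0$, or else the shortest jump in the support of $p$), the Dirichlet-form piece is absorbed into $\tfrac{N}{8B}D_N(\sqrt f,\nu_\alpha)$, leaving the error $\sum_i c_i\cdot 2A=\mathcal{O}(\ve B)$.

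Combining all of the above, the expectation in the lemma is bounded by $\mathcal{O}(1/B)+\mathcal{O}((\ve N)^{2-\gamma})+\mathcal{O}(\ve B)$, and the conclusion follows by taking first $N\to\infty$, then $\ve\to 0$, then $B\to\infty$. I expect the main obstacle to be precisely this balancing act: the Young parameter $A$ must be small enough for the replacement cost to be swallowed by the Dirichlet form furnished by \eqref{dir_est_const}, yet large enough that the error $2A\sum_i c_i$, of size proportional to the length $\ve N$ of the mesoscopic region, vanishes after the limits are sent in the correct order. The two structural inputs making this possible are the concentration of $\Theta_x^-$ near $0$ (which keeps $c_i=\mathcal{O}(1)$) and the restriction $x\le \ve N$ (which keeps the nearest-neighbor paths of length at most $\ve N$).
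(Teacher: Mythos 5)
Your proposal is correct, and it differs from the paper's argument in one substantive place. For the near-boundary region $x\le \ve N$ your treatment is essentially the paper's: entropy inequality with the constant-parameter measure $\nu_\alpha$, Feynman--Kac, telescoping $\eta_x-\eta_y$ into nearest-neighbour increments, a one-bond Young estimate, absorption of the resulting $D^{NN}$ into the negative Dirichlet form supplied by \eqref{dir_est_const} (both arguments need $p(1)>0$, or more precisely the comparison $D^{NN}\lesssim D^0_N$, exactly as in the paper), and an error of order $\ve B$ killed by taking $\ve\to 0$ before $B\to\infty$; your bookkeeping via the bond coefficients $c_i\le\sum_x\Theta^-_x\lesssim 1$ is just a reorganization of the paper's double sum. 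Where you genuinely deviate is the region $x>\ve N$: the paper runs the same entropy/Dirichlet-form machinery there too, telescoping over paths of length up to $N$ and controlling the error by $\tfrac{B}{N}\sum_x x\Theta^-_x\to 0$ (which is why it needs the second half of \eqref{eq:thetaminus}), whereas you observe that this region is deterministically negligible: $|\eta_x-\overrightarrow{\eta}_0^{\ve N}|\le 1$ pathwise and $\sum_{x>\ve N}\Theta^-_x\lesssim (\ve N)^{2-\gamma}\to 0$ for fixed $\ve$, since $\gamma>2$ (indeed finite variance alone gives $\sum_x\Theta^-_x<\infty$, so the tail vanishes). This is a real simplification: it removes the replacement machinery where it is not needed, uses only the summability of $\Theta^-_x$ rather than its first-moment decay, and makes transparent that the entire probabilistic content of the lemma sits in the mesoscopic window near the boundary. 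The paper's uniform treatment buys nothing extra here beyond uniformity in $\ve$ of the far-region estimate, which is irrelevant given the order of the limits $N\to\infty$, then $\ve\to 0$, then $B\to\infty$.
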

\begin{proof}
We present the proof for the first term, but we note that the proof for the second one it is analogous. Here we take as reference measure the Bernoulli product measure with constant parameter (for example $\alpha$) and we recall  \eqref{dir_est_const}. By the entropy and Jensen's inequalities  the expectation in the statement of the lemma is bounded from above, for any $B>0$, by
\begin{equation*}
\begin{split}
\dfrac{H(\mu _{N}\vert \nu^{N}_{\alpha})}{BN}
+ \dfrac{1}{BN}\log \mathbb{E}_{\nu^{N}_{\alpha}}\left[ e^{BN\vert \int _{0}^{t}\sum _{x \in \Lambda_{N}}\Theta_x^-(\eta_x(sN^2)-\overrightarrow{\eta}_{0}^{\ve N}(sN^2))\, ds\vert}\right]\\
\end{split}.
\end{equation*}
As in the previous proof, we can remove the absolute value inside the exponential, so that by \eqref{H} and by Feynman-Kac's formula last expression can be estimated from above by
\begin{eqnarray} 
\label{repla_term}
&&\frac {C_0} {B}+t \sup _{f}\Big\{\sum _{x \in \Lambda_{N}}\Theta_x^-\langle \tau_{x}^{\ve N},f\rangle_{\nu^{N}_\alpha}  + \dfrac{N}{B}\left \langle L_{N}\sqrt{f},\sqrt{f}\right\rangle_{\nu^{N}_\alpha}\Big\},\\ \nonumber
\end{eqnarray}
where the supremum is carried over all the densities $f$ with respect to $\nu_{\alpha}^N$. Here $\tau_{x}^{\ve N}(\eta) = \eta_{x} - \overrightarrow{\eta}_{0}^{\ve N}$.

Now we have to split the sum in $x$, depending on wether  $N-1 \ge x \ge \ve N$ or $x\leq \ve N-1$. We start by the first case and we have 
\begin{eqnarray*}
\langle \tau_{x}^{\ve N},f\rangle_{\nu^{N}_\alpha} &=& \dfrac{1}{\ve N}\sum _{y=1}^{\ve N} \int (\eta_{x}-\eta_{y})f(\eta)\; d\nu^{N}_\alpha\\
&=& \dfrac{1}{\ve N}\sum _{y=1}^{\ve N}\sum_{z=y}^{x-1}\int (\eta_{z+1}-\eta_{z})f(\eta)\; d\nu^{N}_\alpha.
\end{eqnarray*}
By writing the previous term as its half plus its half and by performing in one of the terms the change of variables 
$\eta$ into $\sigma^{z,z+1}\eta$, for which the measure $\nu^N_\alpha$ is invariant, we write it as 
$$\dfrac{1}{2\ve N}\sum _{y=1}^{\ve N}\sum_{z=y}^{x-1}\int (f(\eta)-f(\sigma^{z,z+1}\eta))
(\eta_{z+1}-\eta_{z})\, d{\nu_\alpha^N}.$$
By using the fact that $(a-b)=(\sqrt a -\sqrt b)(\sqrt a+\sqrt b)$ for any $a,b \ge 0$ and since $ab \leq \dfrac{Aa^{2}}{2}+\dfrac{b^{2}}{2A}$ for all $A>0$, we have that 
\begin{equation}
\label{est_repla_term_1}
\begin{split}
\sum _{x=\ve N}^{N-1} \Theta_x^-\langle \tau_{x}^{\ve N},f\rangle_{\nu^{N}_\alpha} &\le \cfrac{A}{2} \, \sum _{x=\ve N}^{N-1} \Theta_x^-\dfrac{1}{2\ve N}\sum _{y=1}^{\ve N}\sum_{z=y}^{x-1}\int (\sqrt {f(\eta)}-\sqrt{f(\sigma^{z,z+1}\eta)})^2 d{\nu^{N}_\alpha}\\
&+ \cfrac{1}{2A} \, \sum _{x=\ve N}^{N-1} \Theta_x^-\dfrac{1}{2\ve N}\sum _{y=1}^{\ve N}\sum_{z=y}^{x-1}\int (\sqrt {f(\eta)}+\sqrt{f(\sigma^{z,z+1}\eta)})^2 (\eta_{z+1}-\eta_{z})^2 d{\nu^{N}_\alpha}.
\end{split}
\end{equation}
By neglecting the jumps of size bigger than one, we see that
$$D^{NN}(\sqrt f,\nu_{\alpha}^N)=\sum_{z\in\Lambda_N}\int \Big(\sqrt {f(\eta)}-\sqrt{f(\sigma^{z,z+1} \eta)}\Big)^2\; d{\nu_{\alpha}^N\; }\lesssim \; D_N^0(\sqrt f,\nu_{\alpha}^N).$$ 
Therefore, by using also (\ref{eq:thetaminus}), the first term at the right hand side of (\ref{est_repla_term_1}) can be bounded from above by 
\begin{eqnarray}
\label{eq:ouf}
\cfrac{A}{4}  \sum _{x =\ve N}^{N-1} \Theta_x^-\; D^{NN}(\sqrt f, \nu^{N}_\alpha)\; \lesssim \; AD^{NN} (\sqrt f, \nu^{N}_\alpha) \; \lesssim \; A D_N^0(\sqrt f,\nu^{N}_\alpha).
\end{eqnarray}
Recall (\ref{dir_est_const}) and observe that $D_N (\sqrt f,\nu_\alpha^N) \ge D_N^0 (\sqrt f,\nu_\alpha^N)$. Then we choose the constant $A$ in the form $A=C N/B$ for some suitable $C$ in order that one half of the term $-\tfrac{N}{4B} D_N (\sqrt f,\nu_\alpha)$ appearing in (\ref{dir_est_const}) counterbalances negatively the term at the right hand side of (\ref{eq:ouf}). Moreover we can bound from above the last term at the right hand side of \eqref{est_repla_term_1} by (use Lemma \ref{lem:densaf}) 
\begin{equation}
\begin{split}
&\frac{B}{N}\; \sum _{x=\ve N}^{N-1} \Theta_x^-\; \dfrac{1}{2\ve N}\sum _{y=1}^{\ve N}\sum_{z=y}^{x-1}\int (\sqrt {f(\eta)}+\sqrt{f(\sigma^{z,z+1}\eta)})^2
(\eta_{z+1}-\eta_{z})^2 d{\nu^{N}_\alpha}\; \lesssim \;  \frac{B}{N}\sum _{x \in \Lambda_{N}}x\Theta_x^-
\end{split}
\end{equation}
which vanishes as $N \to \infty$ by (\ref{dir_est_const}). Therefore we proved that uniformly in $\ve$
\begin{equation*}
\limsup_{B \to \infty} \limsup_{N\to \infty} \; \sup _{f}\Big\{\sum _{x=\ve N}^{N-1} \Theta_x^-\langle \tau_{x}^{\ve N},f\rangle_{\nu^{N}_\alpha}  + \dfrac{N}{2B}\left \langle L_{N}\sqrt{f},\sqrt{f}\right\rangle_{\nu^{N}_\alpha}\Big\} = 0.
\end{equation*}
It remains to prove that
\begin{equation*}
\limsup_{B\to \infty} \limsup_{\ve \to 0} \limsup_{N\to \infty} \; \sup _{f}\Big\{\sum _{x=1}^{\ve N -1} \Theta_x^-\langle \tau_{x}^{\ve N},f\rangle_{\nu^{N}_\alpha}  + \dfrac{N}{2B}\left \langle L_{N}\sqrt{f},\sqrt{f}\right\rangle_{\nu^{N}_\alpha}\Big\} = 0.
\end{equation*}

If $x\le \ve N -1$, we write
\begin{equation*}
\begin{split}
&\langle \tau_{x}^{\ve N},f\rangle_{\nu^{N}_\alpha} =\dfrac{1}{\ve N}\sum _{y=1}^{\ve N} \int (\eta_{x}-\eta_{y})f(\eta)\; d\nu^{N}_\alpha\\
&= \dfrac{1}{\ve N}\sum _{y=1}^{x-1}\sum_{z=y}^{x-1}\int (\eta_{z+1}-\eta_{z})f(\eta)\; d\nu^{N}_\alpha -  \dfrac{1}{\ve N}\sum _{y=x+1}^{\ve N}\sum_{z=x}^{y-1}\int (\eta_{z+1}-\eta_{z})f(\eta)\; d\nu^{N}_\alpha .
\end{split}
\end{equation*}
and the same estimates as before give there exists a constant $C>0$ such that for any $A>0$, 
\begin{equation*}
\begin{split}
\sum _{x=1}^{\ve N-1} \Theta_x^-\langle \tau_{x}^{\ve N},f\rangle_{\nu^{N}_\alpha} &\le C \left[ A D_N (\sqrt f ,\nu_\alpha^N) + \cfrac{\ve N}{A} \sum_{x=1}^{\ve N -1} \Theta_x^-  \right].
\end{split}
\end{equation*}
Recall (\ref{dir_est_const}) and (\ref{eq:thetaminus}). Then, we choose $A=N/ \, 8CB$ and we get that
\begin{equation*}
\limsup_{B \to \infty} \limsup_{\ve \to 0} \limsup_{N\to \infty} \; \sup _{f}\Big\{\sum _{x=1}^{\ve N -1} \Theta_x^-\langle \tau_{x}^{\ve N},f\rangle_{\nu^{N}_\alpha}  + \dfrac{N}{2B}\left \langle L_{N}\sqrt{f}\sqrt{f}\right\rangle_{\nu^{N}_\alpha}\Big\} = 0.
\end{equation*}
 This finishes the proof. 
\end{proof}

\begin{rem}\label{sec_rep_robin}
We note that above, if we change in the statement of the lemma $\Theta_x^-$ by $r_N^-$, then the same result holds by performing exactly the same estimates as above, because what we need is that
\begin{equation}
\sum _{x \in \Lambda_{N}}\Theta_x^- \lesssim 1 \quad \textrm {and}\quad \frac{1}{N} \sum _{x \in \Lambda_{N}}x\Theta_x^-\to0
\end{equation}
which also holds for $r_N^- $ instead of $\Theta_x^-$ since $\gamma>2.$
\end{rem}

\subsection{Fixing the profile at the boundary}
Let $\bb Q$ be a limit point of the sequence $\lbrace\bb Q_{N}\rbrace_{N \geq 1}$, whose existence follows from Proposition \ref{Tightness}  and assume, without lost of generality, that $\lbrace\bb Q_{N}\rbrace_{ N \ge 1}$ converges to $\bb Q$. We note that since our model is an exclusion process, it is standard (\cite{KL}) to show that $\mathbb Q$ almost surely the trajectories of measures are absolutely continuous with respect to the Lebesgue measure, that is: $\pi_t (dq)=\rho_{t}(q)dq$ for any $t \in [0,T]$. In Section \ref{sec:Energy} we  prove that  the density $\rho_t (q)$ belongs to $L^2 (0,T; {\mc H}^1)$ if $\theta \ge 2-\gamma$. In particular, for almost every $t$, $\rho_t$ can be identified with a continuous function on $[0,1]$.\\

{
In this section we prove 3. of Definition \ref{Def. Dirichlet source Condition-g}, that is,  for $\theta \in[2-\gamma,1)$ we show that the profile satisfies $\rho_t(0)=\alpha$ and $\rho_t(1)=\beta$ for $t \in [0,T]$ a.s.}\\

Recall \eqref{boxes}. Observe that 
$$\bb E_{\mu_N}\left[\Big|\int_0^t(\overrightarrow{\eta}_0^{\epsilon N}(sN^2)-\alpha)\, ds\Big|\right] = {\bb E}_{\bb Q_N}  \left[\Big|\int_0^t( \langle \pi_s, \iota^0_\ve \rangle -\alpha)\, ds\Big|\right]$$
where $\iota_\ve^0 (\cdot)  =\ve^{-1} \, {\bf 1}_{(0,\ve)} (\cdot)$. Therefore we have that for any $\delta>0$, 
$${\bb Q_N}  \left[\Big|\int_0^t( \langle \pi_s, \iota^0_\ve \rangle -\alpha)\, ds\Big| >\delta \right] \; \le \; \delta^{-1} \, \bb E_{\mu_N}\left[\Big|\int_0^t(\overrightarrow{\eta}_0^{\epsilon N}(sN^2)-\alpha)\, ds\Big|\right].$$ 
Py Portemanteau's Theorem {\footnote{In fact, since $\iota_{\ve}^0$ is not a continuous function it is not given for free that the set $\Big\{ \pi \, ;\, \Big|\int_0^t( \langle \pi_s, \iota^0_\ve \rangle -\alpha)\, ds\Big| >\delta \Big\}$ is an open set in the Skorohod topology. A simple argument based on a $L^1$-approximation of $\iota^{0}_{\ve}$ by continuous functions permits to bypass this difficulty.}} we conclude that
$${\bb Q}  \left[\Big|\int_0^t( \langle \pi_s, \iota^0_\ve \rangle -\alpha)\, ds\Big| >\delta \right] \; \le \; \delta^{-1} \, \liminf_{N \to \infty} \, \bb E_{\mu_N}\left[\Big|\int_0^t(\overrightarrow{\eta}_0^{\epsilon N}(sN^2)-\alpha)\, ds\Big|\right].$$
Now, if we are able to prove that the right hand side of the previous inequality is zero,  since we have that $\mathbb Q$ a.s. $\pi_s (dq) = \rho_s (q) dq$ with $\rho_s$ a continuous function in $0$ for a.e. $s$, by taking the limit $\ve \to 0$, we can deduce that $\mathbb Q$ a.s. $\rho_s (0) =\alpha$ for a.e. $s \in [0,T]$. A similar argument applies for the right boundary. Therefore it is sufficient to prove the following lemma.

\begin{lem} \label{fix_prof}
Let $\theta<1$. For any $t\in[0,T]$ we have that 
\begin{equation*}
\begin{split}
&{\limsup_{\ve \to 0}}\,{\limsup _{N\rightarrow \infty}}\,\bb E_{\mu_N}\left[\Big|\int_0^t(\overrightarrow{\eta}_0^{\epsilon N}(sN^2)-\alpha)\, ds\Big|\right] =0,\\
&{\limsup_{\ve \to 0}}\,{\limsup _{N\rightarrow \infty}}\,\bb E_{\mu_N}\left[\Big|\int_0^t(\overleftarrow{\eta}_{N}^{\epsilon N}(sN^2)-\beta)\, ds\Big|\right] =0.
\end{split}
\end{equation*}
\end{lem}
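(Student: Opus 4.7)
The proof follows the standard entropy--Feynman--Kac scheme employed in Lemmas~\ref{Rep-Dirichlet1} and~\ref{Rep-Neumann}, augmented by the moving-particle decomposition
$$\eta_y-\alpha \;=\; (\eta_1-\alpha) + \sum_{z=1}^{y-1}(\eta_{z+1}-\eta_z).$$
I treat only the first statement, the second being analogous, and focus on the meaningful range $\theta\in[2-\gamma,1)$ (which is exactly where item~3 of Definition~\ref{Def. Dirichlet source Condition-g} needs to be verified). Fix a Lipschitz profile $h:[0,1]\to[\alpha,\beta]$ with $h(0)=\alpha$ and $h(1)=\beta$, and take $\nu^N_{h(\cdot)}$ as reference measure. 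The entropy bound~\eqref{H}, the device~\eqref{Log bounded} to remove the absolute value, and Feynman--Kac's formula reduce the claim to showing that, for every fixed $B>0$,
$$\limsup_{\ve\to 0}\,\limsup_{N\to\infty}\,\sup_f\Bigl\{\tfrac{1}{\ve N}\sum_{y=1}^{\ve N}\langle \eta_y-\alpha,f\rangle_{\nu^N_{h(\cdot)}} + \tfrac{N}{B}\langle L_N\sqrt f,\sqrt f\rangle_{\nu^N_{h(\cdot)}}\Bigr\}=0,$$
the supremum being over densities $f$ with respect to $\nu^N_{h(\cdot)}$.

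The Dirichlet control is given by inequality~\eqref{dir_est_lip}: since $\gamma>2$ one has $\sum_{x=1}^{N-1}x^2 r_N^-(x/N)=O(N^{3-\gamma}\vee\log N\vee 1)$, so $\tfrac{1}{N^{1+\theta}}\sum_x x^2 r_N^-(x/N)=O(1)$ whenever $\theta\ge 2-\gamma$, and therefore $\tfrac{N}{B}\langle L_N\sqrt f,\sqrt f\rangle_{\nu^N_{h(\cdot)}}\le -\tfrac{N}{4B}D_N(\sqrt f,\nu^N_{h(\cdot)}) + C/B$. I then split the first term according to the decomposition above. For the boundary piece $\langle \eta_1-\alpha,f\rangle$, Lemma~\ref{bound} at $y=1$ combined with the lower bound $D_N^\ell(\sqrt f)\ge\tfrac{\kappa r_N^-(1/N)}{N^\theta}I_1^\alpha(\sqrt f)=\tfrac{\kappa}{2N^\theta}I_1^\alpha(\sqrt f)$ yields, upon choosing $A_1$ of order $BN^{\theta-1}/\kappa$, that this term is bounded by $\tfrac{N}{8B}D_N^\ell(\sqrt f) + O(N^{\theta-1})+O(1/N)$, and the error vanishes because $\theta<1$. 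The gradient piece becomes, by Fubini,
$$\tfrac{1}{\ve N}\sum_{z=1}^{\ve N-1}(\ve N-z)\langle \eta_{z+1}-\eta_z,f\rangle_{\nu^N_{h(\cdot)}}.$$
Each gradient is handled by symmetrizing via the transposition $\sigma^{z,z+1}$: because $h$ is Lipschitz, the Radon--Nikodym derivative $\theta^{z,z+1}(\eta)=d\nu^N_{h(\cdot)}(\sigma^{z,z+1}\eta)/d\nu^N_{h(\cdot)}(\eta)$ satisfies $|1-\theta^{z,z+1}(\eta)|=O(1/N)$ uniformly in $z$ and $\eta$, and Young's inequality gives
$$|\langle \eta_{z+1}-\eta_z,f\rangle_{\nu^N_{h(\cdot)}}|\le \tfrac{A}{4}I_{z,z+1}(\sqrt f,\nu^N_{h(\cdot)})+\tfrac{C}{A}+\tfrac{C}{N}.$$
Choosing $A$ of order $N/B$ and using $(\ve N-z)\le \ve N$ together with the lower bound $D_N^0\ge p(1)\sum_z I_{z,z+1}$ (valid because $p(1)=c_\gamma>0$ in the model $p(z)=c_\gamma|z|^{-\gamma-1}$) absorbs the first term into $\tfrac{N}{8B}D_N^0$, while the remainder is $O_B(\ve)+O(\ve)$.

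Putting the three estimates together, the variational expression is bounded by $C/B+o_{N,\ve}(1)$, and the lemma follows by letting $N\to\infty$, then $\ve\to 0$, and finally $B\to\infty$. The main technical subtlety lies in the moving-particle step: since $\nu^N_{h(\cdot)}$ is not invariant under nearest-neighbor transpositions, the Lipschitz regularity of $h$ is essential to keep the change-of-measure correction at order $1/N$, and the inequality $D_N^0\ge p(1)\sum_z I_{z,z+1}$ is what allows the sum of gradients to be absorbed into the bulk Dirichlet form without losing a power of $N$. A direct application of Lemma~\ref{bound} to each $\eta_y-\alpha$ individually, bypassing the decomposition, would instead produce an error of order $BN^{\theta+\gamma-2}\ve^{\gamma-1}/\kappa$ that diverges for $\theta>2-\gamma$; the telescoping identity is precisely what replaces this obstruction by a harmless $O(\ve)$.
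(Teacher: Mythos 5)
Your overall strategy coincides with the paper's: the paper proves this lemma by combining Lemma \ref{Rep-Dirichlet2} (control of $\eta_1-\alpha$) with Lemma \ref{repla_novo} (control of $\overrightarrow{\eta}_0^{\ve N}-\eta_1$ via the telescoping sum of nearest-neighbour gradients), and your single variational computation reproduces exactly these two pieces. Your boundary estimate (Lemma \ref{bound} at $x=1$, keeping only the $x=1$ term of $D_N^\ell$, error $O(BN^{\theta-1})$) and your gradient estimate (symmetrization by $\sigma^{z,z+1}$, the $O(1/N)$ Radon--Nikodym correction, absorption into $D_N^0$ via $D_N^0\ge p(1)\sum_z I_{z,z+1}$, remainder $O(B\ve)+O(\ve)$) are both correct and are essentially the paper's computations; your closing remark about why the naive term-by-term use of Lemma \ref{bound} fails is also accurate.

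There is, however, a genuine error in your Dirichlet form control, namely in the choice of reference measure. You take $h$ merely Lipschitz with $h(0)=\alpha$, $h(1)=\beta$, invoke \eqref{dir_est_lip}, and claim that $N^{-(1+\theta)}\sum_x x^2 r_N^-(\tfrac xN)=O(1)$ whenever $\theta\ge 2-\gamma$. Since $r_N^-(\tfrac xN)$ is of order $x^{-\gamma}$, the sum $\sum_{x=1}^{N-1}x^2r_N^-(\tfrac xN)$ is of order $N^{3-\gamma}$ only for $\gamma\in(2,3)$; it is of order $\log N$ for $\gamma=3$ and of order $1$ for $\gamma>3$. Consequently the error term in \eqref{dir_est_lip} is of order $N^{2-\gamma-\theta}$ (bounded iff $\theta\ge2-\gamma$) only when $\gamma<3$; for $\gamma>3$ it is of order $\kappa B^{-1}N^{-1-\theta}$, which diverges for every $\theta\in[2-\gamma,-1)$ (a nonempty interval, since $2-\gamma<-1$), and for $\gamma=3$ it diverges logarithmically at $\theta=2-\gamma=-1$. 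So in part of the parameter range you claim to cover, the inequality $\tfrac NB\langle L_N\sqrt f,\sqrt f\rangle_{\nu^N_{h(\cdot)}}\le-\tfrac{N}{4B}D_N(\sqrt f,\nu^N_{h(\cdot)})+C/B$ is simply false for a Lipschitz profile: a profile approaching $\alpha$ only linearly produces too much entropy against the fast reservoirs. This is precisely why the paper requires the reference profile to be H\"older of exponent $\gamma/2$ at the boundaries (and Lipschitz inside): then $(h(\tfrac xN)-\alpha)^2r_N^-(\tfrac xN)\lesssim N^{-\gamma}$ uniformly in $x$, which yields \eqref{dir_est_holder} with error $C\kappa/(BN^{\gamma+\theta-2})$, bounded for all $\theta\ge2-\gamma$. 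Replacing your profile by this one repairs the proof without touching anything else: such a profile is still globally Lipschitz (the map $q\mapsto q^{\gamma/2}$ has bounded derivative on $[0,1]$ since $\gamma>2$), so your gradient step and the $O(1/N)$ change-of-measure bound remain valid, and at $x=1$ the error $h(\tfrac 1N)-\alpha\lesssim N^{-\gamma/2}$ is even smaller than the $O(1/N)$ you used.
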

Last lemma is a consequence of the next  two results. 
\begin{lem} \label{Rep-Dirichlet2}
Let $\theta<1$. For any $t\in[0,T]$ we have that 
\begin{equation*}
\begin{split}
&{\limsup _{N\rightarrow \infty}}\,\bb E_{\mu_N}\left[\Big|\int_0^t(\eta_1(sN^2)-\alpha)\, ds\Big|\right] =0,\\
& {\limsup _{N\rightarrow \infty}}\,\bb E_{\mu_N}\left[\Big|\int_0^t(\eta_{N-1}(sN^2)-\beta)\, ds\Big|\right] =0.
\end{split}
\end{equation*}
\end{lem}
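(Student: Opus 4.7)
The strategy is the standard entropy/Feynman--Kac method, tailored so that the single boundary site $x=1$ is controlled directly by its own contribution to the Dirichlet form. I would take as reference measure the Bernoulli product measure $\nu_{h(\cdot)}^N$ with a slowly varying profile $h:[0,1]\to[\alpha,\beta]$ with $h(0)=\alpha$ and $h(1)=\beta$ (Lipschitz for $\theta\in(2-\gamma,1)$, H\"older of exponent $\gamma/2$ at the boundary when $\theta=2-\gamma$). By the entropy inequality, the bound \eqref{H}, the decomposition $e^{|x|}\le e^x+e^{-x}$ combined with \eqref{Log bounded}, and Feynman--Kac's formula, the expectation in the statement is bounded from above by
$$\tfrac{C_0}{B}+t\sup_{f}\Big\{\bigl|\langle \eta_1-\alpha,f\rangle_{\nu_{h(\cdot)}^N}\bigr|+\tfrac{N}{B}\langle L_N\sqrt{f},\sqrt{f}\rangle_{\nu_{h(\cdot)}^N}\Big\},$$
where the supremum runs over densities $f$ with respect to $\nu_{h(\cdot)}^N$, and it suffices to show that this supremum vanishes when we send $N\to\infty$ and then $B\to\infty$.

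To control the linear term, I apply Lemma \ref{bound} at $x=1$ to get that for any $A_1>0$,
$$|\langle \eta_1-\alpha,f\rangle_{\nu_{h(\cdot)}^N}|\; \lesssim\; \tfrac{1}{A_1}\, I_{1}^{\alpha}(\sqrt{f},\nu_{h(\cdot)}^N)+A_1+\bigl(h(\tfrac{1}{N})-\alpha\bigr).$$
The central observation is that $r_N^-(1/N)=\sum_{y\ge 1}p(y)=1/2$ is a positive constant, independent of $N$, so the boundary piece of the Dirichlet form dominates the contribution at $x=1$: discarding the nonnegative terms with $x\neq 1$ in $D_N^\ell$, the estimate \eqref{dir_est_lip} yields
$$\tfrac{N}{B}\langle L_N\sqrt{f},\sqrt{f}\rangle_{\nu_{h(\cdot)}^N}\; \le\; -\tfrac{N^{1-\theta}\kappa\,r_N^-(1/N)}{4B}\, I_1^\alpha(\sqrt{f},\nu_{h(\cdot)}^N)+\text{(error)}.$$
Choosing $A_1$ of the order $B/(\kappa N^{1-\theta})$ (with the precise constant coming from Lemma \ref{bound}) makes the two $I_1^\alpha$ contributions cancel, leaving $A_1$, the constant $h(1/N)-\alpha$, and the error from \eqref{dir_est_lip}. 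Since $\theta<1$, we have $A_1=\mathcal O(BN^{\theta-1})\to 0$; since $h$ is Lipschitz, $h(1/N)-\alpha\lesssim 1/N\to 0$; and the error in \eqref{dir_est_lip} is at most $C\sigma^2/B$ plus $\tfrac{C\kappa}{BN^{1+\theta}}\sum_{x}\{x^2 r_N^-(x/N)+(x-N)^2r_N^+(x/N)\}$, which by tail estimates analogous to \eqref{estimates_on_p} is $\mathcal O(1/B)$ as soon as $\theta>2-\gamma$. At the critical value $\theta=2-\gamma$ the Lipschitz profile is replaced by the H\"older profile of exponent $\gamma/2$ at the boundary and the estimate \eqref{dir_est_holder} is used in place of \eqref{dir_est_lip}, producing the same $\mathcal O(1/B)$ error.

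The second assertion, for $\eta_{N-1}-\beta$, is entirely symmetric: apply Lemma \ref{bound} at $x=N-1$ with $\beta$ in place of $\alpha$, and note that $r_N^+((N-1)/N)=\sum_{y\ge 1}p(y)=1/2$ is also bounded below by the same positive constant, so the absorption step is identical. The main conceptual point of the proof is the \emph{exact matching} between the $A_1^{-1}$ coefficient produced by Lemma \ref{bound} and the $x=1$ coefficient of $D_N^\ell$; this only works because the weight $r_N^-(1/N)$ at the nearest site to the boundary is of order one, in contrast with the far-away weights $r_N^-(x/N)\sim N^{-\gamma}$ which force the full-sum arguments of Lemma \ref{Rep-Dirichlet1}. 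The main technical obstacle is making sure the error term from \eqref{dir_est_lip} truly stays bounded uniformly in $N$ on the whole range $\theta\ge 2-\gamma$, which is where one has to split according to $\gamma>3$, $\gamma=3$ or $\gamma\in(2,3)$ and, in the critical case, switch to the H\"older profile.
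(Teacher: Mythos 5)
Your proof follows the paper's own argument essentially line by line: the same entropy plus Feynman--Kac reduction with reference measure $\nu^N_{h(\cdot)}$, the same application of Lemma \ref{bound} at the single site $x=1$, the same absorption of $\tfrac{1}{A_1}I_1^{\alpha}(\sqrt f,\nu^N_{h(\cdot)})$ by the $x=1$ summand of $D_N^{\ell}$ (which works precisely because $r_N^-(\tfrac1N)$ is bounded below uniformly in $N$), and the same choice of $A_1$ of order $BN^{\theta-1}$, which vanishes as $N\to\infty$ because $\theta<1$. The right boundary is handled symmetrically in both proofs.

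There is, however, one genuine error in your treatment of the error term of the Dirichlet-form estimate: the claim that the plain Lipschitz profile together with \eqref{dir_est_lip} suffices on all of $\theta\in(2-\gamma,1)$, with the $\gamma/2$-H\"older profile reserved for $\theta=2-\gamma$ alone. Already the single term $x=1$ in the error of \eqref{dir_est_lip} contributes $\tfrac{C\kappa}{BN^{1+\theta}}\,r_N^-(\tfrac1N)$, which is of order $\tfrac{\kappa}{B}N^{-(1+\theta)}$ and hence diverges as $N\to\infty$ whenever $\theta<-1$; more globally, for $\gamma>3$ one has $\sum_{x\in\Lambda_N}x^2r_N^-(\tfrac xN)\lesssim\sum_{x\ge1}x^{2-\gamma}=\mathcal{O}(1)$, so the whole error is of order $\tfrac{\kappa}{B}N^{-(1+\theta)}$ and is \emph{not} $\mathcal{O}(1/B)$. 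Since $2-\gamma<-1$ exactly when $\gamma>3$, your argument fails on the nonempty range $\gamma>3$, $\theta\in(2-\gamma,-1)$, and the case analysis in $\gamma$ you allude to cannot repair it: the obstruction is the Lipschitz decay $(h(\tfrac xN)-\alpha)^2\lesssim(\tfrac xN)^2$ near the boundary, which is too slow to compensate the factor $N^{1-\theta}$ when $\theta$ is very negative. The paper avoids this by making a single choice valid on the whole range: a profile that is Lipschitz in the interior and $\gamma/2$-H\"older (i.e.\ flat, since $\gamma/2>1$) at both boundaries, for which \eqref{dir_est_holder} gives an error $\tfrac{C}{B}\sigma^2+\tfrac{C\kappa}{B}N^{2-\gamma-\theta}$, bounded for every $\theta\ge2-\gamma$. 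Replacing your Lipschitz/H\"older dichotomy by this one choice makes your proof coincide with the paper's. (A shared caveat: like the paper's own proof, yours only covers $\theta\ge2-\gamma$ although the statement reads $\theta<1$; this is harmless since the lemma is invoked only for $\theta\in[2-\gamma,1)$.)
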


\begin{proof} 
We give the proof for the first display, but  we note that for the other one it is similar. Fix a Lipschitz profile $h(\cdot)$ such that $\alpha \le h(\cdot) \le \beta$ and $h(0)=\alpha$, $h(1)=\beta$ and $h(\cdot)$ is $\gamma/2$-H\"older at the boundaries. By the entropy  and Jensen's inequalities, for any $B>0$, the previous expectation is bounded from above by
\begin{equation*} \label{use1}
\begin{split}
\dfrac{H(\mu _{N}\vert {\nu_{h(\cdot)}^{N}}) }{BN}
+ \dfrac{1}{BN}\log \mathbb{E}_{{\nu_{h(\cdot)}^{N}}}\left[ e^{BN\vert \int _{0}^{t}(\eta_1(sN^2)-\alpha)\, ds\vert}\right].
\end{split}
\end{equation*}
By \eqref{H}, Feynman-Kac's formula and noting, as we did in the proof of Lemma \ref{Rep-Dirichlet1}, that we can remove the absolute value inside the exponential, last display can be estimated from above by 
\begin{equation} 
\label{F-K 333}
\frac {C_0} {B}+ t \sup _{f}\left\{\left\langle t_{1}^{\alpha}, f\right\rangle_{{\nu_{h(\cdot)}^{N}}}  + \dfrac{N}{B} \left\langle L_{N}\sqrt{f},\sqrt{f}\right\rangle_{{\nu_{h(\cdot)}^{N}}}\right\},
\end{equation}
where the supremum is carried over all the densities $f$ with respect to $\nu_{h(\cdot)}^N$. Here we recall that $t_{1}^{\alpha}(\eta) = \eta_{1} -\alpha$. By Lemma \ref{bound}, since $h$ is Lipschitz, for any $A>0$, the first term in the supremum in  (\ref{F-K 333}) is bounded from above by 
\begin{equation*}
C \left[ \dfrac{1}{A}  \; I_{1}^{\alpha}(\sqrt{f},\nu_{h(\cdot)}^{N})+ {A}+\frac{1}{N} \right]
\end{equation*}
for some constant $C>0$ independent of $f$ and $A$. Moreover from \eqref{dir_est_holder}, since 
$$D_N (\sqrt{f}, \nu_{h(\cdot)}^N) \ge D_N^{\ell} (\sqrt{f}, \nu_{h(\cdot)}^N)$$
and $\gamma+\theta -2 \ge 0$, we know that there exists a constant $C' >0$ such that  
\begin{eqnarray*}
\dfrac{N}{B} \langle L_{N}\sqrt{f},\sqrt{f}\rangle_{\nu^{N}_{h(\cdot)}} &\leq& -\dfrac{N^{1-\theta}}{4B} \sum _{x \in \Lambda_{N}}I_{x}^{\alpha}(\sqrt{f},\nu_{h(\cdot)}^{N})r_{N}^{-}(\tfrac{x}{N}) + \dfrac{C'}{B}.
\end{eqnarray*}
To get an upper bound, at the right hand side of the previous inequality, we only keep the term coming from $x=1$ in the sum. By  
choosing $A =4C  (r^-_N(\tfrac{1}{N}))^{-1}B N^{\theta -1}$, we get then that the expression inside the brackets in (\ref{F-K 333}) is bounded by 
\begin{eqnarray*}
4 C^2  \dfrac{BN^{\theta-1}}{r_N^-(\tfrac{1}{N})}+\cfrac{C}{N}+\dfrac{C'}{B}.
\end{eqnarray*}
Now since $r_N^-(\tfrac{1}{N})$ is bounded from below by a constant independent of $N$ and $\theta<1$, the proof follows by sending first $N\to \infty$  and then $B\to \infty$.
\end{proof}

\begin{lem}
\label{repla_novo}
Let $\theta\in\mathbb R$. For any $t>0$ we have that 
\begin{equation}
\begin{split}
&{\limsup_{\ve \to 0}}\,{\limsup _{N\rightarrow \infty}}\, \bb E_{\mu_N}\Big[\Big|\int_0^t\overrightarrow{\eta}_0^{\epsilon N}(sN^2)-\eta_{1}(sN^2))\, ds\Big|\Big] =0,\\
&{\limsup_{\ve \to 0}}\,{\limsup _{N\rightarrow \infty}}\, \bb E_{\mu_N}\Big[\Big|\int_0^t\overleftarrow{\eta}_{N}^{\epsilon N}(sN^2)-\eta_{N-1}(sN^2))\, ds\Big|\Big] =0.
\end{split}
\end{equation}
\end{lem}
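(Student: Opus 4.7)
The plan is to mimic the strategy developed in the proofs of Lemmas~\ref{Rep-Neumann} and~\ref{Rep-Dirichlet2}, exploiting that $\overrightarrow{\eta}_0^{\ve N}-\eta_1$ admits a clean telescoping decomposition into nearest-neighbour differences. I take as reference measure $\nu_{h(\cdot)}^N$ associated to a Lipschitz profile $h:[0,1]\to[\alpha,\beta]$ with $h(0)=\alpha$, $h(1)=\beta$ and $\gamma/2$-H\"older regularity at the boundaries. By the entropy bound \eqref{H}, Jensen's inequality, the usual removal of the absolute value via $e^{|x|}\le e^x+e^{-x}$ together with \eqref{Log bounded}, and the Feynman--Kac formula, the first expectation in the statement is bounded for every $B>0$ by
\begin{equation*}
\frac{C_0}{B}+t\sup_{f}\Big\{\langle\overrightarrow{\eta}_0^{\ve N}-\eta_1,f\rangle_{\nu_{h(\cdot)}^N}+\frac{N}{B}\langle L_N\sqrt{f},\sqrt{f}\rangle_{\nu_{h(\cdot)}^N}\Big\},
\end{equation*}
the supremum running over all densities $f$ with respect to $\nu_{h(\cdot)}^N$.

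Next, I rewrite the observable via the telescoping identity $\eta_y-\eta_1=\sum_{z=1}^{y-1}(\eta_{z+1}-\eta_z)$ and an exchange of summations:
\begin{equation*}
\overrightarrow{\eta}_0^{\ve N}-\eta_1=\frac{1}{\ve N}\sum_{y=2}^{\ve N}\sum_{z=1}^{y-1}(\eta_{z+1}-\eta_z)=\frac{1}{\ve N}\sum_{z=1}^{\ve N-1}(\ve N-z)(\eta_{z+1}-\eta_z).
\end{equation*}
Pairing against $f$, applying the standard symmetrization $\int(\eta_{z+1}-\eta_z)f\,d\nu_{h(\cdot)}^N=\tfrac12\int(\eta_{z+1}-\eta_z)(f-f\circ\sigma^{z,z+1})\,d\nu_{h(\cdot)}^N$ up to $O(1/N)$ Radon--Nikodym corrections (controlled via Lemma~\ref{lem:densaf}), and Young's inequality with parameter $A>0$, I obtain
\begin{equation*}
\langle\overrightarrow{\eta}_0^{\ve N}-\eta_1,f\rangle_{\nu_{h(\cdot)}^N}\;\lesssim\;\frac{A}{\ve N}\sum_{z=1}^{\ve N-1}(\ve N-z)\,I_{z,z+1}(\sqrt{f},\nu_{h(\cdot)}^N)+\frac{C\ve N}{A}\;\lesssim\;A\,D_N^0(\sqrt{f},\nu_{h(\cdot)}^N)+\frac{C\ve N}{A},
\end{equation*}
where I used $(\ve N-z)/(\ve N)\le 1$ and $\sum_{z}I_{z,z+1}\lesssim D_N^0$ (neglecting all non-nearest-neighbour jumps from $p(\cdot)$).

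To close the estimate, I invoke \eqref{dir_est_holder}: for the chosen $h$,
\begin{equation*}
\frac{N}{B}\langle L_N\sqrt{f},\sqrt{f}\rangle_{\nu_{h(\cdot)}^N}\le-\frac{N}{4B}D_N(\sqrt{f},\nu_{h(\cdot)}^N)+\frac{C\sigma^2}{B}+\frac{C\kappa}{B N^{\gamma+\theta-2}}.
\end{equation*}
For $\theta\ge 2-\gamma$ the reservoir error is $O(1/B)$, uniformly in $N$. Choosing then $A=N/(cB)$ with $c$ large enough so that $A\,D_N^0\le \tfrac{N}{8B}D_N$, the supremum is bounded by $Cc\ve B+C/B$. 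Sending first $N\to\infty$, then $\ve\to 0$, and finally $B\to\infty$ yields the first display. The second display follows by the completely symmetric telescoping decomposition of $\overleftarrow{\eta}_N^{\ve N}-\eta_{N-1}$ into nearest-neighbour increments near the right boundary, together with an identical chain of estimates.

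The main obstacle is the delicate interplay between the two error terms: the telescoping step produces a contribution of size $\ve N/A$, while absorbing the nearest-neighbour Dirichlet cost into the negative $-\tfrac{N}{4B}D_N$ coming from the generator forces $A\sim N/B$; the resulting error $O(\ve B)$ is harmless only if $\ve\to 0$ is taken \emph{after} $N\to\infty$ but \emph{before} $B\to\infty$, which is exactly the order of limits prescribed in the statement. A secondary technical point is calibrating the H\"older exponent of $h$ at the boundaries: $\gamma/2$ is the right choice to keep the reservoir error in \eqref{dir_est_holder} bounded throughout the range $\theta\in[2-\gamma,1)$, which is precisely the range in which this lemma is ultimately invoked through Lemma~\ref{fix_prof}.
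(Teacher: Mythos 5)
Your proof is correct and follows essentially the same route as the paper's: the same reference measure $\nu^{N}_{h(\cdot)}$, the same entropy/Feynman--Kac reduction, the same telescoping into nearest-neighbour increments (you merely exchange the order of summation), the same absorption of the nearest-neighbour Dirichlet form into $-\tfrac{N}{4B}D_N$ via \eqref{dir_est_holder} with $A\sim N/B$, and an equivalent treatment of the symmetric part of the pairing, since your $O(1/N)$ Radon--Nikodym correction is exactly what the paper's explicit product-measure computation produces through the bound $\big|h\big(\tfrac{z+1}{N}\big)-h\big(\tfrac{z}{N}\big)\big|\lesssim 1/N$, both yielding a total error of order $\ve$. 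Your explicit restriction to $\theta\ge 2-\gamma$ is not a defect relative to the paper: although the lemma is stated for $\theta\in\mathbb{R}$, the paper's own proof invokes the condition $\gamma+\theta-2\ge 0$ at precisely the same step, and the lemma is only applied in the range $\theta\in[2-\gamma,1)$.
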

\begin{proof}
We present the proof of the first item, but we note that for the second it is exactly the same. Fix a Lipcshitz profile $h(\cdot)$ such that $\alpha \le h(\cdot) \le \beta$, $h(0)=\alpha$, $h(1)=\beta$ and $h(\cdot)$ is $\gamma/2$-H\"older at the boundaries. By the entropy and Jensen's inequalities, for any $B>0$,  the previous expectation is bounded from above by
\begin{equation*}
\begin{split}
\dfrac{H(\mu _{N}\vert \nu_{h(\cdot)}^{N}) }{BN}
+ \dfrac{1}{BN}\log \mathbb{E}_{\nu_{h(\cdot)}^{N}}\left[ e^{BN\vert\int_0^t\overrightarrow{\eta}_0^{\epsilon N}(sN^2)-\eta_{1}(sN^2)\, ds\vert}\right].\\
\end{split}
\end{equation*}
By \eqref{H}, Feynman-Kac's formula, and using the same argument as in the proof of the previous  lemma, the estimate of the previous expression can be reduced to bound
\begin{eqnarray} 
\label{E1} 
\nonumber
&&\frac {C_0} {B}+ t \sup _{f}\Big\{\frac{1}{\ell}\sum _{y=2}^{\ell+1} \vert \langle v_{y}^{1},f\rangle_{\nu_{h(\cdot)}^{N}}\vert + \dfrac{N}{B} \left\langle L_{N}\sqrt{f},\sqrt{f}\right\rangle_{\nu_{h(\cdot)}^{N}}\Big\},\\ 
\end{eqnarray}
where $\ell=\epsilon N $ and $v_{y}^{1} (\eta) = \eta_y-\eta_{1}$. Here the supremum is carried over all the densities $f$ with respect to $\nu_{h(\cdot)}^N$. 
Note that since $y \in \Lambda_{N}$ we know that $v_{y}^{1}(\eta) = \sum _{z=1}^{y-1}(\eta_{z+1}-\eta_{z}).$
Observe now that 
\begin{eqnarray*} 
\label{eq:termo} \nonumber
\sum _{z=1}^{y-1}\int(\eta_{z+1}-\eta_{z}) f(\eta)d\nu_{h(\cdot)}^N &=& \dfrac{1}{2}\sum _{z=1}^{y-1}\int(\eta_{z+1}-\eta_{z})(f(\eta)-f(\sigma^{z, z+1} \eta))d\nu_{h(\cdot)}^{N}\\\nonumber
&&+\dfrac{1}{2}\sum _{z=1}^{y-1}\int(\eta_{z+1}-\eta_{z})(f(\eta)+f(\sigma^{z, z+1}\eta))d\nu_{h(\cdot)}^{N}.\\
\end{eqnarray*}
By using the fact that for any $a,b \ge 0$, $(a-b)=(\sqrt a -\sqrt b)(\sqrt a +\sqrt b)$ and Young's inequality, we have, for any positive constant $A$, that 
\begin{equation}\label{eq:term1}
\begin{split}  
\frac{1}{\ell}\sum _{y=2}^{\ell+1} \vert \langle v_{y}^{1},f\rangle_{\nu_{h(\cdot)}^{N}}\vert  &\;  \le \;   \dfrac{1}{2A\ell} \sum _{y=2}^{\ell+1} \sum _{z=1}^{y-1}\int (\eta_{z+1}-\eta_{z})^2\; \Big(\sqrt{f(\eta)}+\sqrt{f(\sigma^{z, z+1} \eta)}\Big)^2d\nu_{h(\cdot)}^{N}\\
&+\dfrac{A}{2\ell}\sum _{y=2}^{\ell+1}  \sum _{z=1}^{y-1}\int \Big(\sqrt{f(\eta)}-\sqrt{f(\sigma^{z, z+1} \eta)}\Big)^2 d\nu_{h(\cdot)}^{N}\\
&+\dfrac{1}{2\ell} \sum _{y=2}^{\ell+1} \left| \sum _{z=1}^{y-1}\int \big( \eta_{z+1}-\eta_{z} \big) \; \Big(f(\eta)+f(\sigma^{z, z+1}\eta)\Big) d\nu_{h(\cdot)}^{N} \right|.
\end{split}
\end{equation}
By neglecting the jumps of size bigger than one, we see that 
$$D^{NN}(\sqrt f,\nu_{h(\cdot)}^N)=\sum_{z\in\Lambda_N}\int \Big(\sqrt {f(\eta)}-\sqrt{f(\sigma^{z,z+1} \eta)}\Big)^2\; d{\nu_{h(\cdot)}^N}\; \lesssim \; D_N^0(\sqrt f,\nu_{h(\cdot)}^N).$$
Then, the second term on the right hand side of (\ref{eq:term1}) is bounded from above by 
\begin{eqnarray*}\nonumber
 \frac{A}{2\ell} \sum_{y=2}^{\ell+1}  D^{NN}(\sqrt f, \nu^N_{h(\cdot)})\leq A\; D^{NN}(\sqrt f, \nu_{h(\cdot)}^N) \le C A \; D_N^0(\sqrt f,\nu_{h(\cdot)}^N)\le  C A \; D_N (\sqrt f,\nu_{h(\cdot)}^N)
\end{eqnarray*}
where $C$ is a positive constant independent of $A,\ell,f$. Then, for the choice $A=N(4BC)^{-1}$ and from \eqref{dir_est_holder}, since $\gamma +\theta -2 \ge 0$, we can bound from above \eqref{E1} by 
\begin{equation}
\label{eq:E1234567890}
\begin{split}
&\dfrac{2BC}{N \ell} \sum _{y=2}^{\ell+1} \sum _{z=1}^{y-1}\int (\eta_{z+1}-\eta_{z})^2\; \Big(\sqrt{f(\eta)}+\sqrt{f(\sigma^{z, z+1} \eta)}\Big)^2d\nu_{h(\cdot)}^{N}\\
&+\dfrac{1}{2\ell} \sum _{y=2}^{\ell+1} \left| \sum _{z=1}^{y-1}\int \big( \eta_{z+1}-\eta_{z} \big) \; \Big(f(\eta)+f(\sigma^{z, z+1}\eta)\Big) d\nu_{h(\cdot)}^{N} \right| + \frac{C' }{B}\\
&\lesssim  \frac{B\ell}{N} + \frac{1}{B} +\dfrac{1}{2\ell} \sum _{y=2}^{\ell+1} \left| \sum _{z=1}^{y-1}\int \big( \eta_{z+1}-\eta_{z} \big) \; \Big(f(\eta)+f(\sigma^{z, z+1}\eta)\Big) d\nu_{h(\cdot)}^{N} \right|
\end{split}
\end{equation}
for some constant $C'>0$. For the last inequality we used Lemma \ref{lem:densaf}. Observe that $B\ell/N = B\varepsilon$ vanishes as $\epsilon \to 0$. It remains to estimate the third term on the right hand side of the last inequality. For that purpose we make a similar computation to the one of Lemma \ref{bound}. Let $C_z=\max\left\{\frac{1}{h\left(\tfrac{z}{N}\right)\left(1-h\left(\tfrac{z+1}{N}\right)\right)},\frac{1}{h\left(\tfrac{z+1}{N}\right)\left(1-h\left(\tfrac{z}{N}\right)\right)}\right\}$ which is bounded above by a constant depending only on $\alpha$ and $\beta$. By using the fact that $\nu^N_{h(\cdot)}$ is product and denoting by $\tilde \eta$ the configuration $\eta$ removing its value at $z$ and $z+1$ so that $(\eta_z,\eta_{z+1},\tilde\eta)=\eta$, we have that
\begin{equation*}
\begin{split}
&\sum_{z=1}^{y-1}
\left\vert \int(\eta_{z+1}-\eta_{z})(f(\eta)+f(\sigma^{z, z+1}\eta)) d\nu_{h(\cdot)}^{N}\right\vert \\
=&\sum_{z=1}^{y-1} \left\vert \sum_{\tilde{\eta}}(f(0,1,\tilde\eta)+f(1,0, \tilde\eta))h(\tfrac{z+1}{N})(1-h(\tfrac{z}{N}))\; \nu_{h(\cdot)}^{N}(\tilde\eta) \right.
\\&-\left. \sum_{\tilde{\eta}}(f(1,0,\tilde\eta)+f(0,1,\tilde\eta))h(\tfrac{z}{N})(1-h(\tfrac{z+1}{N}))\;\nu_{h(\cdot)}^{N}(\bar\eta)\right\vert\\
=&\sum_{z=1}^{y-1}\Big\vert\sum_{\tilde{\eta}}\Big(h\Big(\frac{z+1}{N}\Big)-h\Big(\frac{z}{N}\Big)\Big)(f(0,1,\tilde\eta)+f(1,0,\tilde\eta))\; \nu_{h(\cdot)}^{N}(\tilde\eta)\Big\vert\\
\leq & \frac{1}{2}\, \sum_{z=1}^{y-1}C_z \Big|h\Big(\tfrac{z+1}{N}\Big)-h\Big(\tfrac{z}{N}\Big)\Big)\Big|\; \sum_{\tilde\eta}\Big\{f(1,0,\tilde\eta)\; h\Big(\tfrac{z}{N}\Big)\Big(1-h\Big(\tfrac{z+1}{N}\Big)\Big)\; \nu_{h(\cdot)}^{N}(\tilde\eta)\\&\quad \quad \quad \quad \quad \quad \quad + f(0,1,\bar\eta)\; \Big(1-h\Big(\tfrac{z}{N}\Big)\Big)h\Big(\tfrac{z+1}{N}\Big)\; \nu_{h(\cdot)}^{N}(\tilde\eta)\Big\}\\
 &\lesssim \; \sum_{z=1}^{y-1}\Big|h\Big(\frac{z+1}{N}\Big)-h\Big(\frac{z}{N}\Big)\Big)\Big|.
\end{split}
\end{equation*}
Above, for example, $f(1,0,\tilde\eta)$ (resp. $f(0,1,\tilde\eta)$)  means that we are computing $f(\eta)$ with $\eta$ such that $\eta_z=1$ and $\eta_{z+1}=0$ (resp. $\eta_z=0$ and $\eta_{z+1}=1$).
Since $h(\cdot)$ is Lipschitz, by (\ref{eq:E1234567890}), this estimate provides an upper bound for  \eqref{E1} which is in the form of a constant times 
$$  \frac{B\ell}{N} + \frac{1}{B} +\frac{1}{N\ell}\sum _{y=2}^{\ell+1} y \; \lesssim \; B\ve + B^{-1} + \ve$$
 which vanishes, as $\varepsilon \to 0$ and then $B \to \infty$. This ends the proof.
 \end{proof}

\section{Energy Estimates}\label{sec:Energy}

Let $\bb Q$ be a limit point of the sequence $\lbrace\bb Q_{N}\rbrace_{N \geq 1}$, whose existence follows from Proposition \ref{Tightness}  and assume, without lost of generality, that $\lbrace\bb Q_{N}\rbrace_{ N \ge 1}$ converges to $\bb Q$. We note that since our model is an exclusion process, it is standard (\cite{KL}) to show that $\mathbb Q$ almost surely the trajectories of measures are absolutely continuous with respect to the Lebesgue measure, that is: $\pi_t(dq)=\rho_{t}(q)dq$ for any $t \in [0,T]$.

\subsection{The case $\theta\geq 2-\gamma$}
Recall that in this case the system is speeded up in the diffusive time scale so that $\Theta(N)=N^2$. In this section we prove that the density $\rho_{t}(q)$ belongs to $L^{2}(0,T;\mathcal{H}^{1}(0,1))$, see Definition \ref{Def. Sobolev space}. For that purpose, we define the linear functional $\ell_{\rho}$ on $C^{0,1}_{c}([0,T]\times (0,1))$ by 
$$ \ell_{\rho}(G) = \int^{T}_{0}\int^{1}_{0}\partial_{q}G_{s}(q)\rho_{s}(q) \, dq ds = \int^{T}_{0}\int^{1}_{0}\partial_{q}G_{s}(q)\, d \pi_{s}(q) ds.$$
By Proposition \ref{EE} below we have that $\ell_{\rho}$ is $\bb Q$ almost surely continuous, thus we can extend this linear functional to $L^{2}([0,T]\times (0,1))$. Moreover, by the Riesz's Representation Theorem  we find $\zeta \in L^{2}([0,T]\times (0,1))$ such that
$$\ell _{\rho}(G) = -\int^{T}_{0}\int^{1}_{0} G_{s}(q)\zeta_{s}(q)dq ds , $$ for all $G \in C^{0,1}_{c}([0,T]\times (0,1))$, which implies that $\rho \in L^{2}(0,T;\mathcal{H}^{1}(0,1))$.

\begin{prop} 
\label{EE}
For all $\theta \geq 2-\gamma$. There exist positive constants $C$ and $c$ such that 
$$\mathbb{E}\left[ \sup _{G}\lbrace \ell_{\rho}(G) - c \Vert G \Vert_{2}^{2}\rbrace \right]\leq C < \infty,$$
where the supremum above is taken on the set $C^{0,1}_{c}([0,T]\times (0,1))$. Here we denote by $\Vert G \Vert_{2}$ the norm of a function $G \in L^{2}([0,T]\times (0,1)).$
\end{prop}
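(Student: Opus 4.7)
The plan is to prove the estimate via the entropy inequality and Feynman-Kac's formula, combined with a summation-by-parts argument that converts the supremum over $G$ into a control of the bulk Dirichlet form. First, I fix a countable family $\{G_k\}_{k \ge 1}$ that is dense in $C_c^{0,1}([0,T]\times (0,1))$ for the $L^2$ norm; by monotone convergence the claim reduces to showing that for each $k_0 \ge 1$,
\[
\mathbb{E}_{\mathbb{Q}}\Big[\max_{1\le k\le k_0}\{\ell_\rho(G_k) - c\|G_k\|_2^2\}\Big] \le C
\]
with $C$ independent of $k_0$. Since each $\partial_q G_{k,s}$ is continuous with compact support in $(0,1)$, the map $\pi \mapsto \int_0^T \langle \pi_s, \partial_q G_{k,s}\rangle\, ds$ is continuous on $\mathcal{D}([0,T],\mathcal{M}^+)$, so by weak convergence of $\mathbb{Q}_N$ to $\mathbb{Q}$ and lower semi-continuity of the maximum,
\[
\mathbb{E}_{\mathbb{Q}}\Big[\max_{k}\{\ell_\rho(G_k) - c\|G_k\|_2^2\}\Big] \le \liminf_{N} \mathbb{E}_{\mathbb{Q}_N}\Big[\max_{k}\{\ell_{\pi^N}(G_k) - c\|G_k\|_2^2\}\Big],
\]
where $\ell_{\pi^N}(G) := \int_0^T \langle \pi_s^N, \partial_q G_s\rangle\, ds$.

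To bound the right-hand side I apply the entropy inequality with reference measure $\nu_{h(\cdot)}^N$, for $h$ Lipschitz on $[0,1]$, $\gamma/2$-H\"older at the boundaries, with $h(0)=\alpha$, $h(1)=\beta$. Combined with the elementary bound $\log \sum_k e^{a_k} \le \log k_0 + \max_k a_k$ and \eqref{H}, this reduces the problem to controlling, uniformly in $k$ and $N$, the quantity $N^{-1}\log \mathbb{E}_{\nu_{h(\cdot)}^N}[\exp(N a_k)]$ with $a_k = \ell_{\pi^N}(G_k) - c\|G_k\|_2^2$. By Feynman-Kac's formula (with $\Theta(N)=N^2$), this is at most
\[
\int_0^T \sup_f \Big\{\langle V_{s,k}, f\rangle_{\nu_{h(\cdot)}^N} + N\,\langle L_N \sqrt f, \sqrt f\rangle_{\nu_{h(\cdot)}^N}\Big\}\,ds,
\]
where $V_{s,k}(\eta) := \tfrac{1}{N-1}\sum_{x\in \Lambda_N} \partial_q G_k(s,x/N)\eta_x - c\|G_k(s,\cdot)\|_{L^2}^2$ and the supremum is over densities $f$ with respect to $\nu_{h(\cdot)}^N$.

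The core step is the following. By the Taylor expansion $\partial_q G_k(s, x/N) = N[G_k(s,(x+1)/N) - G_k(s, x/N)] + O(1/N)$ and discrete Abel summation (whose boundary contributions vanish since $G_k$ has compact support in $(0,1)$, for $N$ large), $\langle V_{s,k}, f\rangle$ equals $-\sum_x G_k(s,x/N)\langle \eta_x - \eta_{x-1}, f\rangle_{\nu_{h(\cdot)}^N} - c\|G_k(s,\cdot)\|_{L^2}^2 + o(1)$. The change of variable $\eta \mapsto \sigma^{x-1,x}\eta$ (Radon-Nikodym derivative $1+O(1/N)$) gives $\langle \eta_x - \eta_{x-1}, f\rangle_{\nu_{h(\cdot)}^N} = \int \eta_x[f(\eta) - f(\sigma^{x-1,x}\eta)]\, d\nu_{h(\cdot)}^N + O(1/N)$, and a first Cauchy-Schwarz combined with Lemma \ref{lem:densaf} yields $|\langle \eta_x - \eta_{x-1},f\rangle| \le 2\, I_{x-1,x}(\sqrt f, \nu_{h(\cdot)}^N)^{1/2} + O(1/N)$. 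A second Cauchy-Schwarz in $x$ followed by Young's inequality with parameter $A>0$, using $\sum_x I_{x-1,x}(\sqrt f, \nu_{h(\cdot)}^N) \le p(1)^{-1} D_N^0(\sqrt f, \nu_{h(\cdot)}^N)$ (which exploits $p(1)>0$, true for the kernel of the paper), produces
\[
\Big|\sum_x G_k(s,x/N)\langle \eta_x - \eta_{x-1}, f\rangle\Big| \le \frac{N}{A}\|G_k(s,\cdot)\|_{L^2}^2 + \frac{A}{p(1)}D_N^0(\sqrt f, \nu_{h(\cdot)}^N) + O(1).
\]
Combining this with \eqref{dir_est_holder} for $B=1$, which delivers $N\langle L_N\sqrt f,\sqrt f\rangle_{\nu_{h(\cdot)}^N} \le -\tfrac{N}{4}D_N(\sqrt f, \nu_{h(\cdot)}^N) + C'$ precisely when $\theta \ge 2-\gamma$ (so that $N^{\gamma+\theta-2}\ge 1$), and choosing $A = Np(1)/4$ to cancel the $D_N^0$ contributions, one obtains
\[
\langle V_{s,k}, f\rangle + N\langle L_N\sqrt f,\sqrt f\rangle \le \Big(\tfrac{4}{p(1)}-c\Big)\|G_k(s,\cdot)\|_{L^2}^2 + O(1).
\]
Taking $c = 4/p(1)$ makes the first coefficient non-positive, so the supremum over $f$ is $O(1)$ uniformly in $s,k,N$; integrating over $s$ and tracing back through the entropy/Feynman-Kac chain finishes the argument.

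The main obstacle is precisely the restriction $\theta \ge 2-\gamma$: this is the regime in which the H\"older reference profile $h$ keeps the reservoir error in \eqref{dir_est_holder} of order $O(1)$, so that the diffusive part of the generator can absorb the $D_N^0$ produced by Young's inequality. For $\theta<2-\gamma$ the error blows up as $N^{2-\gamma-\theta}$ and the argument breaks down, consistently with the fact that the hydrodynamic limit is then a pure reaction equation whose solutions need not belong to $\mathcal{H}^1$. A minor technical nuisance is that $C^{0,1}_c$ test functions must first be smoothed in the time variable before applying the Taylor/Abel step, but this approximation preserves $L^2$ norms and is harmless.
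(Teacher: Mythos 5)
Your proposal is correct and follows essentially the same route as the paper's own proof: reduction to a countable dense family plus monotone convergence, Portmanteau via Skorohod continuity of the time-integrated empirical functional, the entropy inequality with the $\gamma/2$-H\"older reference profile $\nu^N_{h(\cdot)}$, Feynman--Kac, spatial summation by parts with the discrete gradient, and absorption of the nearest-neighbor exchange terms into $-\tfrac{N}{4}D_N(\sqrt f,\nu^N_{h(\cdot)})$ via \eqref{dir_est_holder}, which is exactly where $\theta\ge 2-\gamma$ enters. The only notable (and welcome) difference is that you track explicitly the constant in $\sum_x I_{x,x+1}(\sqrt f,\nu^N_{h(\cdot)})\le p(1)^{-1}D^0_N(\sqrt f,\nu^N_{h(\cdot)})$, hence the requirement $p(1)>0$, which the paper uses implicitly (it writes this comparison without the constant) both here and in its replacement lemmas; this only affects the admissible choice of $c$.
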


\begin{proof}
By density it is enough to prove Proposition \ref{EE} for a countable dense subset $\lbrace  G ^{m}\rbrace_{m \in \mathbb{N}}$ on $C_{c}^{0,2}([0,T]\times (0,1))$ and by Monotone Convergence Theorem it is enough to prove that
$$\mathbb{E}\left[ \sup _{k \leq m}\lbrace \ell_{\rho}(G^{k}) - c  \Vert G^{k} \Vert_{2}^{2}\rbrace \right]\leq K_{0},$$
for any $m$ and for $K_{0}$ independent of $m$. 
Now, we define $\Phi: \mc D([0,T], \mc M^+)\rightarrow \RR$ by
$$\Phi(\pi_{\cdot}) = \max_{k\leq m}\left\{  \int^{T}_{0}\int^{1}_{0}\partial_{q}G^k_{s}(q)\, d \pi_{s}(q) ds -c\Vert G^{k} \Vert_{2}^{2}\right\},$$ which is a  continuous and bounded  function for the Skorohod topology of $ \mc D([0,T], \mc M^+)$. Thus we have that 
\begin{eqnarray*}
\mathbb{E}[\Phi] = \lim _{N\rightarrow \infty} \EE _{\mu _{N}}\left[ \max_{k\leq m}\left\lbrace \int_{0}^{T}\dfrac{1}{N-1} 
\sum_{x=1}^{N-1} \partial _{q}G_{s}^{k}(\tfrac{x}{N})\eta_{x}(s)ds -c\Vert G^{k} \Vert_{2}^{2} \right\rbrace \right].
\end{eqnarray*}
By the  entropy inequality,  Jensen's inequality and the fact that $e^{\max_{k\leq m} a_{k}}\leq \sum_{k=1}^{m}e^{a_{k}}$ the previous display is bounded from above by 
\begin{eqnarray*}\nonumber
&&C_{0}
+ \dfrac{1}{N}\log \mathbb{E}_{\nu_{h(\cdot)}^{N}}\left[ \sum _{k =1}^{m}e^{\int _{0}^{T}\sum _{x \in \Lambda_{N}}\partial_{q}G_{s}^{k}(\tfrac{x}{N})\eta_x(s) ds - cN\Vert G^{k} \Vert_{2}^{2} } \right],\\
\end{eqnarray*}
where $\nu^N_{h(\cdot)}$ is the Bernoulli product measure corresponding to a profile $h(\cdot)$ which is Lipschitz such that $\alpha \le h(\cdot)\le \beta$, $h(0)=\alpha$, $h(1)=\beta$ and $h$ is  $\gamma/2$-H\"older at the boundaries.  In order to deal with the second term in the previous display we use (\ref{Log bounded}) and it is enough  to bound 
$$\limsup_{N \to \infty} \;  \dfrac{1}{N}\log \mathbb{E}_{{\nu_{h(\cdot)}^{N}}}\left[ e^{\int _{0}^{T}\sum _{x \in \Lambda_{N}}\partial_{q}G_{s}(\tfrac{x}{N})\eta_x(s) ds - cN\Vert G \Vert_{2}^{2} } \right],$$
for a fixed function $G\in C_{c}^{0,2}([0,T]\times (0,1))$, by a constant independent of $G$.
By Feynman-Kac's formula, the expression inside the limsup is bounded from above by 
\begin{equation*} 
\label{EE3}
\int _{0}^{T}\sup _{f}\Big\{\dfrac{1}{N}\int_{\Omega_{N}}\sum _{x \in \Lambda_{N}}\partial_{q}G_{s}(\tfrac{x}{N})\eta_x f(\eta)d{\nu_{h(\cdot)}^{N}} (\eta) - c\Vert G \Vert_{2}^{2}  + \frac{\Theta(N)}{N} \langle L_{N}\sqrt{f},\sqrt{f}\rangle_{{\nu_{h(\cdot)}^{N}}}\Big\}\, ds
\end{equation*}
where the supremum is carried over all the densities $f$ with respect to $\nu_{h(\cdot)}^N$. Let us now focus on the first term inside braces in the previous expression.  
Observe first that the space derivative of $G_s$ can be replaced by the discrete gradient $\nabla_{N} G_{s}(\tfrac{x-1}{N})= N \big[ G_s (\tfrac{x}{N}) -G_s (\tfrac{x-1}{N}) \big]$ of $G_s$ with an error $R_N(G)$ satisfying uniformly in $N$ the bound $| R_N (G) |  \lesssim 1/N$ since $G \in C_{c}^{0,2}([0,T], (0,1))$. By summing and subtracting the term $\nabla_{N} G_{s}(\tfrac{x-1}{N})$ inside the sum, and doing a summation by parts, we can write
\begin{equation*}
\dfrac{1}{N}\int_{\Omega_{N}}\sum _{x \in \Lambda_{N}}\partial_{q}G_{s}(\tfrac{x}{N})\eta_x f(\eta)d{\nu_{h(\cdot)}^{N}} (\eta) = \int_{\Omega_{N}} \sum_{x=1}^{N-2} G_{s}(\tfrac{x}{N})(\eta_{x}-\eta_{x+1})f(\eta)d\nu_{h(\cdot)}^{N}(\eta)+ R_N(G).
\end{equation*}
A simple computation shows that we can write the first term at the right hand side of the previous display as 
\begin{equation}
\begin{split}
\label{EE5}
&\dfrac{1}{2}\int_{\Omega_{N}} \sum_{x=1}^{N-2} G_{s}(\tfrac{x}{N})(\eta_{x}-\eta_{x+1})(f(\eta)-f(\sigma^{x,x+1}\eta))d{\nu_{h(\cdot)}^{N}} \\
+&  \dfrac{1}{2}\int_{\Omega_{N}} \sum_{x=1}^{N-2} G_{s}(\tfrac{x}{N})(\eta_{x}-\eta_{x+1})f(\sigma^{x,x+1}\eta)(1-\theta^{x,x+1}(\eta))d{\nu_{h(\cdot)}^{N}}.
\end{split}
\end{equation}
Recall that for $u,v \ge 0$, $u- v = (\sqrt u -\sqrt v) (\sqrt u + \sqrt v)$ and the inequality $ab \leq \dfrac{B a^{2}}{2} +\dfrac{b^{2}}{2B}$ valid for any $B>0$. Taking $B=\dfrac{N}{\Theta(N)}$ and using  Lemma \ref{lem:densaf} we bound the first term in (\ref{EE5})  by 
\begin{eqnarray}\label{EE6}\nonumber
&&\dfrac{N}{4\Theta(N)}\int_{\Omega_{N}} \sum_{x=1}^{N-2} (G_{s}(\tfrac{x}{N}))^{2}(\sqrt{f(\eta)}+ \sqrt{ f(\sigma^{x,x+1}\eta)})^{2}d\nu_{h(\cdot)}^{N}(\eta) \\\nonumber
&+ & \dfrac{\Theta(N)}{4N}\int_{\Omega_{N}} \sum_{x=1}^{N-2} (\sqrt{f(\eta)}- \sqrt{ f(\sigma^{x,x+1}\eta)})^{2}d\nu_{h(\cdot)}^{N}(\eta) \\\nonumber
&\leq & \dfrac{\Theta(N)}{4N} D^{0}_{N}(\sqrt{f},\nu_{h(\cdot)}^{N}) + \dfrac{C N}{\Theta(N)}\sum_{x\in \Lambda_{N}} (G_{s}(\tfrac{x}{N}))^{2}
\end{eqnarray}
for some $C>0$. 
Similarly we can estimate the second term in (\ref{EE5}) from above by 
\begin{eqnarray*}
& &\dfrac{1}{4N}\int_{\Omega_{N}} \sum_{x=1}^{N-2} (G_{s}(\tfrac{x}{N}))^{2} (\eta_x -\eta_{x+1})^2  f(\sigma^{x,x+1}\eta)d\nu_{h(\cdot)}^{N}(\eta) \\\nonumber
& +& \dfrac{N}{4}\int_{\Omega_{N}} \sum_{x=1}^{N-2}  f(\sigma^{x,x+1}\eta)(\theta^{x,x+1}(\eta)-1)^{2}d\nu_{h(\cdot)}^{N}(\eta) \\\nonumber
&\lesssim & \dfrac{1}{N} \sum_{x\in \Lambda_{N}} (G_{s}(\tfrac{x}{N}))^{2} + 1.
\end{eqnarray*}
We use now (\ref{dir_est_holder}) with $B=1$ there and observe that last two terms at the right hand side of  \eqref{dir_est_holder} are  bounded from above by  a constant since $\gamma+ \theta -2 \ge 0$.  Observe also that $D_N^0 (\sqrt{f}, \nu_{h(\cdot)}^N) \le D_N (\sqrt{f}, \nu_{h(\cdot)}^N)$. Recalling that $\Theta (N) =N^2$ we get then that (\ref{EE3}) is bounded from above by 
\begin{equation*}
C \int_0^T \Big[ 1 +  \dfrac{1}{N} \sum_{x\in \Lambda_{N}} (G_{s}(\tfrac{x}{N}))^{2}\Big] \, ds  \; -\;  c \| G|_2^2 \; +\; R_N (G)
\end{equation*}
where $C$ is a positive constant independent of $G$. We then choose $c>C$ in order to conclude that 
$$\limsup_{N\to \infty} \; \Big\{ C \int_0^T \Big[ 1 +  \dfrac{1}{N} \sum_{x\in \Lambda_{N}} (G_{s}(\tfrac{x}{N}))^{2}\Big] \, ds  \; -\;  c \| G|_2^2 \; +\; R_N (G) \Big\} \; \lesssim \; 1.$$
This achieves the proof.
\end{proof}

\subsection{The case $\theta\leq 2-\gamma$}
In this section we prove that the function  $(t,q) \to \rho_{t}(q) -\alpha$ belongs to $L^{2}([0,T]\times (0,1), dt \otimes d\mu)$, where $\mu$ is the measure that has the density with respect to the Lebesgue measure given by
$$u\in[0,1]\rightarrow\frac{1}{u^{\gamma}}.$$
A similar proof would show that the function  $(t,q) \to \rho_{t}(q) -\beta$ belongs to $L^{2}([0,T]\times (0,1), dt \otimes d\mu')$, where $\mu'$ is the measure that has the density with respect to the Lebesgue measure given by
$$u\in[0,1]\rightarrow\frac{1}{(1-u)^{\gamma}}.$$

Let $\nu^{N}_{h(\cdot)}$ be as above, 
where $h : [0, 1] \rightarrow [0, 1]$ is a profile such that $\alpha \leq h(q)\leq\beta$, for all $q\in [0,1]$,  $h(0) = \alpha$ and $h(1)=\beta$, H\"older of parameter $\gamma/2$ at the boundaries and Lipschitz inside.
 Let $G \in C_c^{1,\infty} ([0,T]\times [0,1])$.  By the  entropy inequality and the Feynmann-Kac's formula, we have that
\begin{equation}\label{eq:varfor}
\begin{split}
&{\mathbb E}_{\mu_N} \left( \int_0^T dt \;  N^{\gamma -1} \sum_{x\in \Lambda_N}  G(t, \tfrac{x}{N}) r_N^-\Big(\tfrac{x}{N}\Big) (\eta_x(tN^{\theta+\gamma})-\alpha)\right)\\
&\le C_0 +\int_0^T \sup_{f} \left\{ N^{\gamma-1}  \sum_{x \in \Lambda_N}  G(t, \tfrac{x}{N}) r_N^-\Big(\tfrac{x}{N}\Big) \langle t_x^\alpha,f\rangle_{\nu^{N}_{h(\cdot)}}+  \dfrac{\Theta(N)}{N} \left\langle  L_N   {\sqrt f} , {\sqrt f} \right\rangle_{\nu^{N}_{h(\cdot)}}  \right\}dt
\end{split}
\end{equation}
where the supremum is taken over all the densities $f$ on $\Omega_N$ with respect to $\nu_{h(\cdot)}^N$. Below $C$ is a constant that may change from line to line. Since the profile is H\"older of parameter $\gamma/2$ at the boundaries and Lipschitz inside, and from \eqref{dir_est_holder} the term at the right hand side of last expression is bounded from above by $$-\frac{\Theta(N)}{4N}D_N(\sqrt f,\nu_{h(\cdot)}^N)+\dfrac{\Theta(N)}{N^{2}}C+\dfrac{\Theta(N)}{N^{\gamma+\theta}}C.$$ Repeating the proof of Lemma \ref{fix_prof} last expression is bounded from above by 
\begin{eqnarray*}
CN^{\gamma-1}\sum_{x\in\Lambda_N}r_N^-\Big(\tfrac{x}{N}\Big)G^2\Big(t,\tfrac{x}{N}\Big)+C+ \dfrac{\Theta(N)}{N^{2}}C +\dfrac{\Theta(N)}{N^{\gamma+\theta}}C.
\end{eqnarray*}
We take the limit $N \to \infty$. We conclude that there exist constants $C',C''>0$ independent of $G$ such that
\begin{equation}
{\bb E} \left[  \int_0^T \int_0^1\cfrac{(\rho_t(u) -\alpha) G (t,u) }{|u|^{\gamma}}\; du dt \; -\;   C \int_0^T\int_0^1\cfrac{  G^2(t,u) }{|u|^{\gamma}} \; du dt  \right] \le C'.
\end{equation}
By using a similar method as in the proof of the previous lemma we see that the supremum over $G$ can be inserted in the expectation so that
\begin{equation}
{\bb E} \left[ \sup_G \left\{  \int_0^T \int_0^1\cfrac{(\rho_t(u) -\alpha) G (t,u) }{|u|^{\gamma}}\; du dt \; -\;   C \int_0^T\int_0^1\cfrac{G^2(t,u) }{|u|^{\gamma}} \; du dt\right\}  \right] \le C'.
\end{equation}
The previous formula implies that 
$$\EE \left[ \int_0^T\int_0^1 \frac{ (\rho_t(u)-\alpha)^2}{|u|^\gamma}\, du dt \right] \le C''.$$
which  proves the claim.

\section{Characterization of limit points} \label{sec:Characterization of limit points}
We prove in this section that for each range of $\theta$, all limit points $\bb Q$ of the sequence $\{\mathbb Q_N\}_{N\in\mathbb{N}}$ are concentrated on trajectories of measures absolutely continuous with respect to the Lebesgue measure whose density $\rho_{t}(q)$ is a weak solution of the corresponding hydrodynamic equation. Let $\bb Q$ be a limit point of the sequence $\lbrace\bb Q_{N}\rbrace_{N \geq 1}$, whose existence follows from Proposition \ref{Tightness}  and assume, without lost of generality, that $\lbrace\bb Q_{N}\rbrace_{ N \ge 1}$ converges to $\bb Q$. As mentioned above, since there is at most one particle per site, it is easy to show that $\bb Q$ is concentrated on trajectories $\pi_{t}(dq)$ which are absolutely continuous with respect to the Lebesgue measure, that is, $\pi_{t}(dq)=\rho_{t}(q)dq$ (for more details see \cite{KL}).  Below, we prove, for each range of $\theta$, that the density $\rho_{t}(q)$ is a weak solution of the corresponding hydrodynamic equation. 

\begin{prop}
\label{prop:weak_sol_car}
If $\bb Q$ is a limit point of $ \{\bb Q_{N}\}_{N\in\mathbb N}$  then 
\begin{enumerate}[1.]
\item if $\theta < 1$:
\begin{eqnarray*} 
\bb Q\left( \pi _{\cdot}\in \mc D([0,T], \mathcal{M^{+}}): F_{RD}(t, \rho,G,g)= 0,\forall t\in [0,T],\, \forall G \in C_c^{1,2} ([0,T]\times[0,1])\,\right)=1.
\end{eqnarray*}
\item if $\theta\in{[1,+\infty)}$:
\begin{eqnarray*} 
\bb Q\left( \pi _{\cdot}\in \mc D([0,T], \mathcal{M^{+}}): F_{Rob}(t, \rho,G,g)= 0,\forall t\in [0,T],\, \forall G \in C^{1,2} ([0,T]\times[0,1])\,\right)=1.
\end{eqnarray*}
\end{enumerate}
\end{prop}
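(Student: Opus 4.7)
The plan is to start from Dynkin's martingale formula \eqref{Dynkin'sFormula} and to pass to the limit $N\to\infty$ term by term, following the heuristic computations of Section \ref{sec:CL} and exploiting the Replacement Lemmas of Section \ref{sec:RL}. First I would observe that the quadratic variation estimate already established in the tightness proof (bound \eqref{T6}) gives $\mathbb{E}_{\mu_N}[(M_t^N(G))^2]\lesssim 1/N$, so that by Doob's inequality $\sup_{t\le T}|M_t^N(G)|\to 0$ in $L^2$ and hence in probability. Consequently it suffices to analyze the limit of the integral term in \eqref{Dynkin'sFormula}, which is exactly the decomposition \eqref{genaction}--\eqref{bulkaction} performed in the heuristics section. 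Since the model has at most one particle per site, the standard argument of \cite{KL} guarantees that limit points are concentrated on trajectories $\pi_t(dq)=\rho_t(q)\,dq$.

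Next I would treat the five regimes of $\theta$ separately by choosing an appropriate class of test functions. For $\theta<1$ take $G\in C_c^{1,2}([0,T]\times[0,1])$ and extend $G_s$ by zero outside $(0,1)$; the heuristics of Sections 3.1--3.3 show that every remainder term vanishes in $L^1(\mathbb{P}_{\mu_N})$ either directly (using the explicit scale of $\Theta(N)$ and that $G r^\pm$ is bounded) or through Lemma \ref{convergence laplacian}, which replaces $\Theta(N) K_N G$ by $\tfrac{\sigma^2}{2}\Delta G$ for $\theta\ge 2-\gamma$. The only surviving boundary contribution in the $\theta<2-\gamma$ and $\theta=2-\gamma$ cases is the reaction integral
\begin{equation*}
\kappa\int_0^t\!\!\int_0^1 G_s(q)\left(\tfrac{\alpha-\rho_s(q)}{q^\gamma}+\tfrac{\beta-\rho_s(q)}{(1-q)^\gamma}\right)c_\gamma\gamma^{-1}\,dq\,ds,
\end{equation*}
yielding $F_{RD}(t,\rho,G,g)=0$. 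For $\theta\in[2-\gamma,1)$ I would moreover invoke Lemma \ref{fix_prof} together with the $L^2(0,T;\mathcal{H}^1)$ regularity proved in Proposition \ref{EE} to guarantee $\rho_s(0)=\alpha$, $\rho_s(1)=\beta$ a.e., thus verifying item 3 of Definition \ref{Def. Dirichlet source Condition-g}.

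For $\theta\ge 1$ take $G\in C^{1,2}([0,T]\times[0,1])$ extended smoothly to $\mathbb{R}$ with compact support. The boundary generator terms in \eqref{genaction} now need to be controlled: for $\theta>1$ Lemma \ref{Rep-Dirichlet1} directly kills the two reservoir contributions, while the bulk boundary terms in \eqref{bulkaction} are handled through a double Taylor expansion, reducing them to \eqref{eq:rob_1}, and the convergences \eqref{eq:sum236} together with Lemma \ref{Rep-Neumann} identify the limit as $\tfrac{\sigma^2}{2}[G'(0)\rho_s(0)-G'(1)\rho_s(1)]$. For the critical case $\theta=1$, exactly the same scheme applies except that now the reservoir terms in \eqref{genaction} no longer vanish; the averages $\overrightarrow{\eta}_0^{\epsilon N}$ and $\overleftarrow{\eta}_N^{\epsilon N}$ are introduced by Lemma \ref{Rep-Neumann} (see Remark \ref{sec_rep_robin}) and the limits \eqref{mean} then produce the Robin contribution \eqref{rob_cond1}. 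Combining the bulk and boundary limits gives $F_{Rob}(t,\rho,G,g)=0$ in both cases.

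The main technical obstacle is the rigorous justification of replacing microscopic densities $\eta_x(sN^2)$ by the macroscopic value $\rho_s(0)$ (or $\rho_s(1)$) in the boundary terms: one cannot evaluate $\pi_s^N$ at a single point but only integrate it against test functions. The standard workaround, which I would implement, is the two-step approximation $\eta_x(sN^2)\rightsquigarrow\overrightarrow{\eta}_0^{\epsilon N}(sN^2)\rightsquigarrow\langle\pi_s^N,\iota_\epsilon^0\rangle$ where $\iota_\epsilon^0=\epsilon^{-1}\mathbf{1}_{(0,\epsilon)}$, together with a continuity argument in the Skorohod topology (approximating $\iota_\epsilon^0$ by continuous functions in $L^1$, as in the footnote of Section \ref{sec:RL}). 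After passing $N\to\infty$ using weak convergence to $\mathbb{Q}$ and the $L^1$ estimate on $|\overrightarrow{\eta}_0^{\epsilon N}-\rho_s(0)|$ derived from the $\mathcal{H}^1$ regularity (continuity of $\rho_s$ at the boundary) and Lemma \ref{fix_prof} in the appropriate regime, one sends $\epsilon\to 0$ to recover the pointwise boundary value. Packaging these ingredients and noting the uniqueness of the weak solutions (Appendix \ref{sec:app-unique}) completes the identification of $\mathbb{Q}$.
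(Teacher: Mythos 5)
Your proposal is correct and follows essentially the same route as the paper's own proof: Dynkin's formula with the martingale term killed by Doob's inequality and \eqref{T6}, Lemma \ref{convergence laplacian} for the bulk term, the replacement lemmas (Lemma \ref{Rep-Dirichlet1}, Lemma \ref{Rep-Neumann} with Remark \ref{sec_rep_robin}) for the boundary terms, and the two-step approximation $\eta_x \rightsquigarrow \overrightarrow{\eta}_0^{\ve N} \rightsquigarrow \langle \pi_s, \iota_\ve^0\rangle$ combined with Portmanteau's Theorem (after $L^1$-approximation of $\iota_\ve^0$ by continuous functions) and the $\mc H^1$-continuity of $\rho_s$ to recover $\rho_s(0)$ and $\rho_s(1)$. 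The only differences are organizational rather than mathematical: the paper argues backwards from the $\bb Q$-probability of the bad event instead of forwards from Dynkin's formula, and it defers the verification of the regularity and boundary-condition items of the definitions (your appeals to Proposition \ref{EE} and Lemma \ref{fix_prof}) to Sections \ref{sec:RL} and \ref{sec:Energy}, since the Proposition itself only asserts the weak formulations $F_{RD}=0$ and $F_{Rob}=0$.
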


\begin{rem} In this proposition, the constants $\hat \kappa, \hat \sigma, \hat m$ appearing in $F_{RD}$ and $F_{Rob}$ are fixed by Theorem \ref{th:hl900}.
\end{rem}

\begin{proof}
Note that in order to prove the proposition, it is enough to verify, for $\delta > 0$ and $ G$ in the corresponding space of test functions,  that  
\begin{eqnarray} \nonumber
&&\bb Q\left(\pi _{\cdot}\in \mc D([0,T], \mathcal{M^{+}}): \sup_{0\le t \le T} \left\vert F_{\bullet}(t,\rho,G,g) \right\vert>\delta\right)=0,
\end{eqnarray}
for each $\theta$, where $F_\bullet$ stands for $F_{RD}$ if $\theta<1$ and $F_{Rob}$ if $\theta\ge 1$.  From here on, in order to simplify notation, we will erase $\pi_\cdot$ from the sets that we have to look at. 

\vspace{0.5cm}
\noindent $\bullet$ We start with the case $\theta \in [1,\infty)$.
Recall $F_{Rob}(t,\rho,G,g)$ from Definition \ref{eq:Robin integral-g}. Observe  that, due to the boundary terms that involve $\rho_s(1)$ and $\rho_s(0)$, the set inside last probability  is not an open set in the Skorohod space, therefore we cannot use directly Portmanteau's Theorem as we would like to. In order to avoid this problem, we fix $\ve>0$ and we consider two approximations of the identity given by $\iota^0_\ve(q)=\frac{1}{\ve}\textrm{1}_{(0,\ve)}(q)$ and $\iota^1_\ve(q)=\frac{1}{\ve}\textrm{1}_{(1-\ve,1)}(q)$ and we sum and subtract  to $\rho_{s}(0)$ (resp.  $\rho_{s}(1)$) the mean  $< \pi_s, \iota^0_\ve>= \tfrac{1}{\ve}\int_{0}^{\ve}\rho_{s}(q)dq$ (resp. $<\pi_s, \iota^1_\ve>=\tfrac{1}{\ve}\int_{1-\ve}^{\ve}\rho_{s}(q)dq$). 
Thus,  we bound last probability  from above by the sum of the following four terms
\begin{equation}\label{RC6}
\begin{split}
&\bb Q\left(  \sup_{0\le t \le T} \Big|\int_0^1 \rho_{t}(q)  G_{t}(q) \,dq  -\int_0^1  \rho_{0}(q) G_{0}(q) \,dq 
-\int_0^t\int_0^1 \rho_{s}(q)  \Big(\tfrac{\hat \sigma^{2}}{2}\Delta+\partial_s\Big) G_{s}(q)\,dqds \right.\\
 & -  \int^{t}_{0}< \pi_s, \iota^0_\ve>\Big(\tfrac{\hat \sigma^{2}}{2}\partial_{q}G_{s}(0) - \hat m G_{s}(0\Big)\, ds + \int^{t}_{0}  <\pi_s, \iota^1_\ve>\Big(\tfrac{\hat \sigma^{2}}{2} \partial_{q}G_{s}(1)+ {\hat m}  G_{s}(1)\Big)\,ds \\
 &\left.- {\hat m}\int^{t}_{0}  G_{s}(0)\alpha+ G_{s}(1)\beta \,ds  \Big|>\dfrac{\delta}{4}\right),
\end{split}
\end{equation}
\begin{equation}
\label{RC71}
\bb Q \left(  \Big| \int^{1}_{0} (\rho_{0}(q)-g(q)) G_{0}(q)\, dq \Big|>\dfrac{\delta}{4}\right),
\end{equation}
\begin{equation}
\label{RC7}
\bb Q\left(  \sup_{0\le t \le T}  \Big| \int^{t}_{0} \left[ \rho_{s}(0)-< \pi_s, \iota^0_\ve> \right]\left[\hat m G_{s}(0)-\dfrac{ \hat \sigma^{2}}{2} \partial_{q}G_{s}(0)  \right]ds  \Big|>\dfrac{\delta}{4}\right),
\end{equation}
and
\begin{equation}
\label{RC8}
\bb Q\left(  \sup_{0\le t \le T}  \Big|\int^{t}_{0} \left[\rho_{s}(1)- < \pi_s, \iota^1_\ve>\right](\hat m G_{s}(1) +\dfrac{\hat \sigma^{2}}{2} \partial_{q}G_{s}(1)  )ds  \Big|>\dfrac{\delta}{4}\right). 
\end{equation}
We note that the terms  (\ref{RC7}) and (\ref{RC8}) converge to $0$ as $\ve \to 0$  since we are comparing $\rho_s(0)$ (resp. $\rho_s(1)$) with the corresponding  average around the boundary points $0$ (resp. $1$) and (\ref{RC71}) is equal to zero since $\mathbb Q$ is a limit point of $\{\mathbb Q_N\}_{N\in\mathbb N}$ and $\mathbb Q_N$ is induced by $\mu_N$ which satisfies  \eqref{assoc_mea}. Therefore it remains only to consider (\ref{RC6}).  We still cannot use Portmanteau's Theorem, since the functions $\iota^0_\ve$ and $\iota_\ve^1$ are not continuous. Nevertheless,  we can approximate each one of these functions by continuous functions  in such a way that the error vanishes as $\ve \to 0$. Then, from Proposition A.3 of \cite{FGN} we can use Portmanteau's Theorem and   bound (\ref{RC6}) from above by
\begin{equation}
\label{RC1111}
\begin{split}
&\liminf_{N\to\infty}\,\bb Q_{N}\left( \sup_{0\le t \le T} \Big| \int_0^1 \rho_{t}(q)  G_{t}(q) \,dq  -\int_0^1  \rho_{0}(q)  G_{0}(q) \,dq \right.\\
&-\int_0^t\int_0^1 \rho_{s}(q)  \Big(\dfrac{\hat \sigma^{2}}{2}\Delta+\partial_s\Big) G_{s}(q)\,dqds \\
&  -\int^{t}_{0}< \pi_s, \iota^0_\ve>\Big(\dfrac{\hat \sigma^{2}}{2}\partial_{q}G_{s}(0) - \hat m G_{s}(0\Big)\, ds + \int^{t}_{0}  <\pi_s, \iota^1_\ve>\Big(\dfrac{\hat \sigma^{2}}{2} \partial_{q}G_{s}(1)+ {\hat m}  G_{s}(1)\Big)\,ds \\
& \left. - {\hat m}\int^{t}_{0}  G_{s}(0)\alpha+ G_{s}(1)\beta \,ds  \Big|>\dfrac{\delta}{2^{4}}\right).
\end{split}
\end{equation} 
Summing and subtracting $\displaystyle\int_{0}^{t} N^{2}L_{N}\langle \pi_{s}^{N},G_{s}\rangle ds$ to the term inside the supremum in  (\ref{RC1111}), recalling \eqref{Dynkin'sFormula} and \eqref{boxes}, the definition of $\mathbb Q_N$,
we bound \eqref{RC1111}  from above by the sum of the next two terms
\begin{equation}\label{RC12}
\liminf_{N\to\infty}\,\bb P_{\mu_N}\left( \sup_{0\le t \le T} \left\vert M_{t}^{N}(G) \right\vert>\dfrac{\delta}{2^{5}}\right),
\end{equation} 
and
\begin{equation}
\label{RC13}
\begin{split}
&\liminf_{N\to\infty}\,\bb P_{\mu_N}\left(  \sup_{0\le t \le T} \Big|\int_0^t N^{2}L_{N}\langle \pi_{s}^{N},G_{s}\rangle\,ds-\dfrac{\hat \sigma^{2}}{2}\int_0^t\int_0^1 \rho_{s}(q) \Delta G_{s}(q)\,dqds \right.\\
 & - \int^{t}_{0}\overrightarrow{\eta}_0^{\ve N}(s)\Big(\dfrac{\hat \sigma^{2}}{2}\partial_{q}G_{s}(0) - \hat m G_{s}(0\Big)\, ds + \int^{t}_{0}  \overleftarrow{\eta}_{N-1}^{\ve N}(s)\Big(\dfrac{\hat \sigma^{2}}{2} \partial_{q}G_{s}(1)+ {\hat m}  G_{s}(1)\Big)\,ds\\
 &\left.  -{\hat m}\int^{t}_{0}  G_{s}(0)\alpha+ G_{s}(1)\beta\,ds  \Big|>\dfrac{\delta}{2^{5}}\right).
\end{split}
\end{equation} 
From Doob's inequality together with \eqref{T6}, (\ref{RC12}) goes to $0$ as $N\to\infty$. Finally, (\ref{RC13}) can be rewritten as  
 \begin{equation}
 \label{RC14}
 \begin{split}
&\liminf_{N\to\infty}\,\bb P _{\mu _{N}}\left( \sup_{0\le t \le T}  \Big| \int_0^t N^{2}L_{N}\langle \pi_{s}^{N},G_{s}\rangle\,ds-\dfrac{\hat \sigma^{2}}{2}\int_0^t \langle \pi_{s}^{N} ,\Delta G_{s}\rangle\,ds \right. \\
& -\int^{t}_{0}\overrightarrow{\eta}_0^{\ve N}(s)\Big(\dfrac{\hat \sigma^{2}}{2}\partial_{q}G_{s}(0) - \hat m G_{s}(0\Big)\, ds + \int^{t}_{0}  \overleftarrow{\eta}_{N-1}^{\ve N}(s)\Big(\dfrac{\hat \sigma^{2}}{2} \partial_{q}G_{s}(1)+ {\hat m}  G_{s}(1)\Big)\,ds \\
&\left. -{\hat m}\int^{t}_{0}  G_{s}(0)\alpha+ G_{s}(1)\beta\,ds   \Big|>\dfrac{\delta}{2^{5}}\right).
\end{split}
\end{equation} 
Now, from (\ref{gen_action}) and (\ref{bulkaction}) we can bound from above the probability in (\ref{RC14}) by the sum of the five following terms
\begin{equation}
\label{RC15}
\bb P_{\mu_{N}}  \left(\sup_{0\le t \le T} \Big| \cfrac{N^{2}}{N-1}\int_{0}^{t} \sum_{x\in \Lambda_N} K_NG_s(\tfrac{x}{N})\eta_x(sN^2) ds  - \dfrac{\hat \sigma^{2}}{2} \int_{0}^{t}\left\langle \pi_{s}^{N},\Delta G_{s} \right\rangle   \, ds \Big|>\dfrac{\delta}{2^{6}}\right),
\end{equation}
\begin{equation}
\label{RC16}
\begin{split}
&\bb P_{\mu_{N}}  \left( \sup_{0\le t \le T}  \Big| \cfrac{N^{2}}{N-1}\int_{0}^{t} \sum_{x\in \Lambda_N}\sum_{y\leq 0}\left[G_{s}(\tfrac{y}{N})- G_{s}(\tfrac{x}{N})\right] p(x-y)\eta_x(sN^2) ds \right. \\
&\qquad \qquad \qquad \qquad\qquad \left. +\dfrac{\hat \sigma^{2}}{2} \int_{0}^{t} \overrightarrow{\eta}_0^{\ve N}(sN^{2}) \partial_{q}G_{s}(0)   \, ds  \Big|>\dfrac{\delta}{2^{6}}\right),
\end{split}
\end{equation}
and
\begin{equation}
\label{RC18}
\begin{split}
 &\bb P_{\mu_{N}}\left(\sup_{0\le t \le T} \Big|  \int_{0}^{t}\cfrac{N\kappa}{N-1}\sum_{x \in \Lambda_N}  ( G_{s} r_{N}^{-})(\tfrac{x}{N}){(\alpha-\eta_x(sN^2))} \, ds\right.\\
 &\qquad \qquad \qquad \qquad\qquad \left. -m\kappa   \int_0^t G_{s}(0)(\alpha-\overrightarrow{\eta}_0^{\ve N}(sN^2)) ds\Big| > \dfrac{\delta}{2^{6}}\right) 
 \end{split}
\end{equation}
and the sum of two terms which are very similar to the two previous ones but which are concerned with the right boundary.  Thus, to conclude we have to show that these five terms go to $0$. Applying Lemma \ref{convergence laplacian} and noting that $|\eta_x (sN^2)| \le 1$ for any $x$ and any $s \ge 0$, we conclude  that (\ref{RC15}) goes to $0$ as $N\to \infty$. Note also that by Taylor expansion, we can bound from above (\ref{RC16})  by 
\begin{equation}
\label{RC20}
\bb P_{\mu_{N}}  \left( \sup_{0\le t \le T} \Big| \int_{0}^{t} \partial_{q}G_{s}(0)  \sum_{x \in \Lambda_{N}} \Theta_{x}^{-}\left[ \eta_{x}(s N^{2})- \overrightarrow{\eta}_0^{\ve N}(sN^{2})\right]  ds \Big|>\dfrac{\delta}{2^{8}}\right).
\end{equation}
Using Lemma \ref{Rep-Neumann} we see that (\ref{RC20}) vanishes as $N\to \infty$. Now we look at (\ref{RC18}) and we prove that is vanishes as  $N\to\infty$.  Performing a Taylor expansion on $G_s$ at $0$ and using (\ref{mean}) the probability in (\ref{RC18}) is bounded from above by 
\begin{eqnarray}\label{RC21}\nonumber
 &&\bb P_{\mu_{N}}\left(\sup_{0\le t \le T} \Big|  \int_{0}^{t}G_{s}(0)\sum_{x\in\Lambda_{N}}r_{N}^{-}(\tfrac{x}{N})\Big[\overrightarrow{\eta}_{0}^{\ve N}(sN^{2}) -\eta_{x}(sN^{2})\Big] ds \Big|  > \dfrac{\delta}{2^{8}} \right), 
\end{eqnarray}
plus lower-order terms (with respect to $N$). From Lemma \ref{Rep-Neumann} and Remark \ref{sec_rep_robin} last display vanishes as $N \to \infty$. Similarly the two terms which are similar to (\ref{RC16}) and (\ref{RC18}) but which are concerned with the right boundary vanish as $N \to \infty$. Thus the proof is finished.

\vspace{0.5cm}
\noindent
$\bullet$ Now we treat the case $\theta<1$. We have to prove that
\begin{eqnarray} \nonumber
&&\bb Q\left(\pi _{\cdot}\in \mc D([0,T], \mathcal{M^{+}}): \sup_{0\le t \le T} \left\vert F_{RD}(t,\rho,G,g) \right\vert>\delta\right)=0
\end{eqnarray}
for any $G \in C_c^{1,2} ([0,T]\times[0,1])$. We can bound from above the previous probability by 
\begin{equation}\label{RD6}
\begin{split}
&\bb Q\left(  \sup_{0\le t \le T} \Big|\int_0^1 \rho_{t}(q)  G_{t}(q) \,dq  -\int_0^1  \rho_{0}(q) G_{0}(q) \,dq 
-\int_0^t\int_0^1 \rho_{s}(q)  \Big(\tfrac{\hat \sigma^{2}}{2}\Delta+\partial_s\Big) G_{s}(q)\,dqds \right.\\
 &\left. - {\hat \kappa}\int^{t}_{0}  \int_0^1G_{s}(q)V_0(q)\,dq\,ds+ {\hat \kappa}\int_0^t\int_0^1G_s(q)\rho_s(q)V_1(q)\, dq\,ds  \Big|>\dfrac{\delta}{2}\right),
\end{split}
\end{equation}
and
\begin{equation}
\label{RD71}
\bb Q \left(  \Big| \int^{1}_{0} (\rho_{0}(q)-g(q)) G_{0}(q)\, dq \Big|>\dfrac{\delta}{2}\right),
\end{equation}
where $V_0(q)=\frac{\alpha}{q^\gamma}+ \frac{\beta}{(1-q)^\gamma}$ and $V_1(q)=\frac{1}{q^\gamma}+\frac{1}{(1-q)^\gamma}$
We note that (\ref{RD71}) is equal to zero since $\mathbb Q$ is a limit point of $\{\mathbb Q_N\}_{N\in\mathbb N}$ and $\mathbb Q_N$ is induced by $\mu_N$ which satisfies  \eqref{assoc_mea}.
We note that  from Proposition A.3 of \cite{FGN}, the set inside the probability in \eqref{RD6} is an open set in the Skorohod space (the singularities of $V_0$ and $V_1$ are not present because $G_s$ has compact support).  From Portmanteau's Theorem we bound \eqref{RD6} from above by
\begin{eqnarray}\label{RC11}\nonumber
&&\liminf_{N\to\infty}\,\bb Q_{N}\left( \sup_{0\le t \le T} \Big| \int_0^1 \rho_{t}(q)  G_{t}(q) \,dq  -\int_0^1  \rho_{0}(q)  G_{0}(q) \,dq \right.\\\nonumber
& &-\int_0^t\int_0^1 \rho_{s}(q)  \Big(\tfrac{\hat \sigma^{2}}{2}\Delta+\partial_s\Big) G_{s}(q)\,dqds   - {\hat \kappa}\int^{t}_{0}\int_0^1  G_{s}(q)V_0(q) \,dq\,ds \\\nonumber
& &\left.  +{\hat \kappa}\int^{t}_{0}\int_0^1  G_{s}(q)\rho_s(q)V_1(q) \,dq\,ds 
 \Big|>\dfrac{\delta}{2}\right) .
\end{eqnarray} 
Summing and subtracting $\displaystyle\int_{0}^{t} \Theta(N)L_{N}\langle \pi_{s}^{N},G_{s}\rangle ds$ to the term inside the previous absolute values, recalling \eqref{Dynkin'sFormula} and  the definition of $\mathbb Q_N$,  we can bound the previous probability   from above by the sum of the next two terms 
\begin{eqnarray}\label{CLP1} \nonumber
 & &\bb P_{\mu_{N}} \left(\sup_{0\le t \le T} \left\vert M_{t}^{N}(G) \right\vert>\dfrac{\delta}{4}\right),
\end{eqnarray}
and
\begin{equation}
\label{CLP2}
\begin{split}
&\bb P_{\mu_{N}}  \left( \sup_{0\le t \le T} \Big| \int_{0}^{t} \Theta(N)L_{N}\langle \pi_{s}^{N},G_{s}\rangle ds -\int_0^t\left\langle \pi_{s}^{N},\tfrac{\sigma^{2}}{2}\Delta  G_{s} \right\rangle \,ds \right.\\
&- {\hat \kappa}\int^{t}_{0}\int_0^1  G_{s}(q)V_0(q) \,dq\,ds
\left. +{\hat \kappa}\int^{t}_{0}\int_0^1  G_{s}(q)\rho_s(q)V_1(q) \,dq\,ds  \Big|>\dfrac{\delta}{4}\right). 
\end{split}
\end{equation}
The first term above can be estimated as in the case $\theta\geq 1$ and it vanishes as $N\to\infty$.  It remains to prove that (\ref{CLP2}) vanishes as $N\to\infty$. For that purpose, we recall \eqref{eq:rs-r+-} and  we use (\ref{gen_action}), (\ref{bulkaction})  to bound it from above by the sum of the following terms
\begin{equation}
\label{DC1}
\bb P_{\mu_{N}}  \left(\sup_{0\le t \le T} \Big| \int_{0}^{t}\cfrac{\Theta(N)}{N-1} \sum_{x\in \Lambda_N}K_NG_{s}(\tfrac{x}{N})\eta_x(sN^2) ds- \dfrac{\hat\sigma^{2}}{2} \int_{0}^{t}\left\langle \pi_{s}^{N},\Delta G_{s} \right\rangle   \, ds \Big|>\dfrac{\delta}{2^{4}}\right),
\end{equation}
and
\begin{equation}
\label{DC2}
\begin{split}
&\bb P_{\mu_{N}}\left(\sup_{0\le t \le T} \Big|   \int_{0}^{t} \Big\{ \tfrac{ \kappa\Theta(N)}{(N-1)N^\theta} \sum_{x \in \Lambda_N}  (G_{s} r_{N}^{-})(\tfrac{x}{N}){(\alpha-\eta_x(sN^2))}  \right.\\
& \quad \quad \quad \quad \quad \quad \quad \quad   \left.- \hat \kappa \int_{0}^{1}  (G_{s}r^{-})(q)(\alpha - \rho_{s}(q))dq \Big\}\, ds\Big| > \dfrac{\delta}{2^{4}}\right),
\end{split}
\end{equation}
and
\begin{equation}
\label{DC3}
\begin{split}
&\bb P_{\mu_{N}}\left(\sup_{0\le t \le T} \Big|   \int_{0}^{t} \Big\{ \tfrac{ \kappa\Theta(N)}{(N-1)N^\theta} \sum_{x \in \Lambda_N}  (G_{s} r_{N}^{+})(\tfrac{x}{N}){(\beta-\eta_x(sN^2))} \right.\\
& \quad \quad \quad \quad \quad \quad \quad \quad  \left.- \hat \kappa \int_{0}^{1}  (G_{s}r^{+})(q)(\beta - \rho_{s}(q))dq \Big\}\, ds\Big| > \dfrac{\delta}{2^{4}}\right),
\end{split}
\end{equation}

In the case $\theta\in[2-\gamma,1)$, since $\Theta(N)=N^2$ and $\hat\sigma=\sigma$,   from  Lemma \ref{convergence laplacian}  we have that (\ref{DC1}) goes to $0$ as $N\to \infty$.
In the case $\theta<2-\gamma$, since $\Theta(N)=N^{\theta+\gamma}$ and $\hat\sigma=0$,   from  Lemma \ref{convergence laplacian}  we also have that (\ref{DC1}) goes to $0$ as $N\to \infty$.

Now we analyze the boundary terms (\ref{DC2}) and (\ref{DC3}). 
Note that in the case $\theta\in(2-\gamma,1)$ we have $\theta(N)=N^2$ and  $\hat\kappa=0$, so that the two previous probabilities vanish, as $N\to\infty$, as a consequence of Lemma \ref{Rep-Dirichlet1}.  In the case $\theta\le 2-\gamma$,  since  $\Theta(N)=N^{\gamma+\theta}$, $\hat\kappa=\kappa c_\gamma \gamma^{-1}$,  $\vert \eta_{x}(sN^{2})\vert \leq 1$, in order to conclude it is enough to note that 
since $G_{s}$ has compact support in $(0,1)$ we know by (\ref{eq:rs-r+-}) that  $N^{\gamma}G_{s}r^{-}_{N}(q)$ and $N^{\gamma}G_{s}r^{+}_{N}(q)$ converge uniformly to $(G_{s}r^{-})(q)$ and $(G_{s}r^{+})(q)$, respectively, as $N\to\infty$. 
This ends the proof.
\end{proof}

\section*{Acknowledgements}
This work has been supported by the projects EDNHS ANR-14- CE25-0011, LSD ANR-15-CE40-0020-01 of the French National Research Agency (ANR) and of the PHC Pessoa Project 37854WM. B.J.O. thanks Universidad Nacional de Costa Rica for financial support through his Ph.D grant.

This project has received funding from the European Research Council (ERC) under  the European Union's Horizon 2020 research and innovative programme (grant agreement   No 715734). 

This work was finished during the stay of P.G. at Institut Henri Poincar\'e - Centre Emile Borel during the trimester "Stochastic Dynamics Out of Equilibrium". P.G. thanks this institution for hospitality and support.

\appendix

\section{Uniqueness of weak solutions}
\label{sec:app-unique}

The uniqueness of the weak solutions of the partial equations given in Section \ref{sec:hyd_eq}  is fundamental for the proof of the hydrodynamic limit. The uniqueness of weak solutions of  (\ref{eq:Dirichlet source Equation-g}) is standard if $\hat \kappa =0$. Since we were not able to find in the literature a proof in the case $\hat \kappa>0$ we give a complete proof below. The proof of uniqueness of weak solutions of (\ref{Robin Equation-g}) can be found in, for example, \cite{Adriana}. 

Now  we prove the uniqueness of weak solutions of  \eqref{eq:Dirichlet source Equation-g}. We assume that $\hat \sigma>0$ and $\hat \kappa>0$ first and then we consider the case $\hat \sigma=0$ and $\hat \kappa>0$.

Let $\rho^1$ and $\rho^2$ be  two weak solutions of (\ref{eq:Dirichlet source Equation-g}) with the same initial condition and let us denote $\bar \rho = \rho^1 -\rho^2$. By assumption we have that 
$$\bar \rho \in L^2 \Big(0,T ; {\mc H}^1\Big) \cap L^2 \Big( 0,T; L^2 ((0,1); V_1(q) dq) \,\Big) $$ 
where $V_1(q)=q^{-\gamma} + (1-q)^{-\gamma}$. Let us denote by $\langle \cdot, \cdot \rangle_{V_1}$ (resp. $\| \cdot\|_{V_1}$) the scalar product (resp. the norm) corresponding to the Hilbert space  $L^2 ((0,1), V_1(q) dq)$.

For almost every $t\in [0,T]$, we identify ${\bar \rho}_t$ with its continuous representation in $[0,1]$. Therefore, from {Remark \ref{use:rem_dir}},  we have that ${\bar \rho}_t (0)={\bar \rho}_t  (1)=0$ for all $t\in[0,T]$. Since ${\mc H}_0^{1}$ is equal to the set of functions in ${\mc H}^{1}$ vanishing at $0$ and $1$ we have that for a.e. time $t \in [0,T]$, ${\bar \rho}_t \in {\mc H}_0^{1}$ and in fact ${\bar \rho} \in L^2 (0,T;{\mc H}_0^{1})$. From \emph{2.} in Definition \ref{Def. Dirichlet source Condition-g},  for any $t \in [0,T]$ and any $G \in C_c^{1,2} ([0,T] \times [0,1])$ we have
\begin{equation}
\label{eq:unique007}
\begin{split}
&\int_0^1 {\bar \rho}_{t}(q)  G_{t}(q) \,dq - \int_0^t \int_0^1 {\bar \rho}_{s}(q)\Big(\partial_s + \dfrac{\hat \sigma^{2}}{2}\Delta \Big) G_{s}(q)  \,dq ds \\
&\quad +\hat \kappa \int^{t}_{0} \int_0^1 V_1(q)  G_s (q) {\bar \rho}_s (q) \,dq\, ds=0.
\end{split}
\end{equation}
We know that $C_c^{1,\infty} ([0,T] \times (0,1))$ is dense in $L^2 (0,T; {\mc H}_0^{1}) \cap L^2 \Big( 0,T; L^2 ((0,1); V(q) dq) \,\Big)$. Therefore, let $(H_n)_{n \ge 0}$ be a sequence of functions in  $C_c^{1,\infty} ([0,T] \times (0,1))$ converging to ${\bar \rho}$ with respect to the norms of $L^2 (0,T; {\mc H}_0^{1})$ and $ L^2 \Big( 0,T; L^2 ((0,1); V_1(q) dq) \,\Big)$. We define  $G_n$ in $C_{c}^{1,\infty}([0,T]\times [0,1])$ by 
\begin{equation}
\label{def_G_n}
\forall t \in [0,T], \quad \forall q \in [0,1], \quad G_n (t,q)= \int_t^T  H_n (s,q) \,ds.
\end{equation}
Plugging $G_n$ into (\ref{eq:unique007}) and letting $n \to \infty$ we conclude, by Lemma \ref{lem:unique-conv-007} below, that
\begin{equation*}
\int_0^T \int_0^1 {\bar \rho}^2_s (q) \, dq\, ds + \cfrac{\hat \sigma^2}{4} \; \Big\| \int_0^T {\bar \rho}_s ds \Big\|^2_{1} + \cfrac{\hat \kappa}{2}\;  \Big\|\int_0^T {\bar \rho}_s ds  \, \Big\|_{V_1}^2 =0.
\end{equation*}

It follows that for almost every time $s \in [0,T]$ the continuous function $\bar \rho_s$ is equal to $0$ and we conclude the uniqueness of weak solution to  (\ref{eq:Dirichlet source Equation-g}) in the case $\hat \sigma>0$.

\begin{lem}
\label{lem:unique-conv-007}
Let $(G_n)_{n\geq 0}$ be defined as in \eqref{def_G_n}. We have
\begin{enumerate}[i)]
\item $\lim_{n \to \infty}  \int_0^T \int_0^1 {\bar \rho}_s (q) \, (\partial_s G_n) (s,q) \, dq ds = - \int_0^T \int_0^1 {\bar \rho}^2_s (q) \, dq ds$. 
\item $\lim_{n \to \infty} \int_0^T \int_0^1 {\bar \rho}_{s}(q)  \Delta G_{n} (s,q)  \,dq ds =- \; \frac{1}{2}\Big\| \int_0^T {\bar \rho}_s ds \Big\|^2_{1}.$ 
\item $\lim_{n \to \infty} \int^{T}_{0} \int_{0}^1V_1(q)  G_n (s,q) {\bar \rho}_s (q) \,dq\, ds =\;\frac{1}{2}  \Big\|\int_0^T {\bar \rho}_s ds  \, \Big\|_{V_1}^2 < \infty.$
\end{enumerate}
\end{lem}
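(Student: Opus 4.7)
The plan is to exploit the specific structure $G_n(s,q) = \int_s^T H_n(u,q)\, du$ to rewrite each of the three integrals in a form that pairs $\bar\rho_s$ with $H_n$, then pass to the limit using $H_n \to \bar\rho$ in the appropriate Hilbert space $H$, and finally recognize the limit via the chain-rule identity
\begin{equation*}
\int_0^T \langle \Phi(u), \Phi'(u) \rangle_H \, du = \frac{1}{2} \|\Phi(T)\|_H^2,
\end{equation*}
where $\Phi(u) := \int_0^u \bar\rho_s\, ds$ and $\Phi(0)=0$. For (i), the observation is immediate: $\partial_s G_n(s,q) = -H_n(s,q)$, so the integral equals $-\int_0^T \int_0^1 \bar\rho_s(q) H_n(s,q)\, dq\, ds$, and Cauchy--Schwarz together with the convergence $H_n \to \bar\rho$ in $L^2(0,T;\mathcal H_0^1) \hookrightarrow L^2([0,T]\times[0,1])$ yields the claim.

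For (ii), since $G_n$ has compact support in $(0,1)$, integrating by parts in $q$ produces no boundary terms, and the integral becomes $-\int_0^T \langle \bar\rho_s, G_n(s,\cdot)\rangle_1\, ds$. Substituting the definition of $G_n$ and applying Fubini to interchange the order of integration over the region $\{(s,u) : 0 \le s \le u \le T\}$, this equals $-\int_0^T \langle \Phi(u), H_n(u,\cdot)\rangle_1\, du$. Since $\Phi(u) \in \mathcal H_0^1$ for every $u$ (as a Bochner integral of $\mathcal H_0^1$-valued functions), the convergence $H_n \to \bar\rho$ in $L^2(0,T;\mathcal H_0^1)$ lets us pass to the limit to obtain $-\int_0^T \langle \Phi(u), \bar\rho_u\rangle_1\, du = -\frac{1}{2}\|\Phi(T)\|_1^2$ by the chain-rule identity above.

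The argument for (iii) is essentially identical, replacing $\langle \cdot,\cdot\rangle_1$ by $\langle \cdot,\cdot\rangle_{V_1}$; no integration by parts is needed. Writing the integral as $\int_0^T \langle G_n(s,\cdot), \bar\rho_s\rangle_{V_1}\, ds$, the same Fubini step followed by passing to the limit with $H_n \to \bar\rho$ in $L^2(0,T; L^2((0,1), V_1\,dq))$ produces $\int_0^T \langle \Phi(u), \bar\rho_u\rangle_{V_1}\, du = \frac{1}{2}\|\Phi(T)\|_{V_1}^2$, which is finite because $\bar\rho \in L^2(0,T; L^2((0,1), V_1\,dq))$ by assumption. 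The only technical point to justify rigorously is the chain-rule identity, i.e.\ that the $H$-valued primitive $u \mapsto \Phi(u)$ is absolutely continuous with a.e.\ derivative $\bar\rho_u \in H$ so that $\frac{d}{du}\|\Phi(u)\|_H^2 = 2\langle \Phi(u), \bar\rho_u\rangle_H$; this is standard Bochner integration theory once we observe $\bar\rho \in L^2(0,T;H) \subset L^1(0,T;H)$.
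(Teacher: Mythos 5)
Your proof is correct, and for item i) it is identical to the paper's (the observation $\partial_s G_n=-H_n$ plus Cauchy--Schwarz and the embedding of $L^2(0,T;\mathcal H^1_0)$ into $L^2([0,T]\times[0,1])$); for ii) and iii) it reaches the same conclusion by a genuinely different organization of the argument. The paper decomposes $G_n(s,\cdot)=\int_s^T\bar\rho_u\,du+\int_s^T\{H_n(u,\cdot)-\bar\rho_u\}\,du$ \emph{before} any limit is taken, computes the main term exactly through the elementary triangle-to-square symmetrization
\begin{equation*}
\int_0^T \Big\langle \bar\rho_s\,,\int_s^T \bar\rho_u\,du\Big\rangle_H ds \;=\; \iint_{0\le s<u\le T}\langle\bar\rho_s,\bar\rho_u\rangle_H\,du\,ds \;=\;\tfrac12 \iint_{[0,T]^2}\langle\bar\rho_s,\bar\rho_u\rangle_H\,du\,ds \;=\;\tfrac12\Big\|\int_0^T\bar\rho_s\,ds\Big\|_H^2,
\end{equation*}
and disposes of the error term by successive Cauchy--Schwarz estimates. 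You instead apply Fubini first, rewriting everything as $\int_0^T\langle\Phi(u),H_n(u,\cdot)\rangle_H\,du$ with $\Phi(u)=\int_0^u\bar\rho_s\,ds$, pass to the limit (legitimate because $\sup_u\|\Phi(u)\|_H\le\sqrt T\,\|\bar\rho\|_{L^2(0,T;H)}<\infty$, so $\Phi\in L^2(0,T;H)$ --- a point you should state explicitly), and only then produce the factor $\tfrac12$ by the chain rule $\tfrac{d}{du}\|\Phi(u)\|_H^2=2\langle\Phi(u),\bar\rho_u\rangle_H$ for absolutely continuous Hilbert-space-valued primitives. Note that the error term you estimate, $\int_0^T\langle\Phi(u),H_n(u,\cdot)-\bar\rho_u\rangle_H\,du$, coincides (by the same Fubini step) with the one the paper estimates, so the analytic content of the two limit passages is the same; the real divergence is in the final identity, where your appeal to Bochner-space calculus can be replaced by the paper's symmetrization, which proves $\int_0^T\langle\Phi(u),\bar\rho_u\rangle_H\,du=\tfrac12\|\Phi(T)\|_H^2$ with bare hands. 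What each buys: your formulation is more conceptual (an energy identity for the primitive $\Phi$, which generalizes immediately to any Hilbert space), while the paper's is more elementary and self-contained, requiring nothing beyond Fubini and the symmetry of the inner product.
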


\begin{proof}
For i) we write
\begin{equation*}
\begin{split}
&-\int_0^T \int_0^1 {\bar \rho}_s (q) \, (\partial_s G_n) (s,q) \, dq ds = \int_0^T \int_0^1 {\bar \rho}_s (q) \, H_n (s,q) \, dq ds = \int_0^T  \langle {\bar \rho}_s \, , \, H_n (s, \cdot) \rangle \, ds\\
& = \int_0^T  \big\langle {\bar \rho}_s \, , \, H_n (s, \cdot) - {\bar \rho}_s \big\rangle \, ds \; + \; \int_0^T \| {\bar \rho}_s \|^2_{L^2} \, ds.
\end{split}
\end{equation*}
Observe then that by Cauchy-Schwarz inequality we have
\begin{equation}\label{use_uniq}
\begin{split}
& \left| \int_0^T  \big\langle {\bar \rho}_s \, , \, H_n (s, \cdot) - {\bar \rho}_s \big\rangle \, ds \right| \le \int_0^T \| {\bar \rho}_s \|_{L^2} \, \| H_n (s, \cdot) - {\bar \rho}_s \|_{L^2} \, ds\\
&\le \sqrt{ \int_0^T  \| {\bar \rho}_s \|_{L^2}^2 \, ds} \; \sqrt{ \int_0^T  \| H_n (s, \cdot) - {\bar \rho}_s \|_{L^2}^2 \, ds } 
\end{split} 
\end{equation}
which goes to $0$ as $n \to \infty$. Above we have used the fact that $(H_n)_{n\geq 0}$ converges to $\bar \rho$ as $N\to\infty$ with respect to the norm of $L^2 (0,T;{\mc H}_0^{1})$.

For ii) we first use the integration by parts formula  for ${\mc H}_1$ functions which permits to write
$$ \int_0^T \int_0^1 {\bar \rho}_{s}(q)  \; \Delta G_{n} (s,q)  \,dq ds = - \int_0^T \Big\langle \bar{ \rho}_s \,  , G_n (s, \cdot) \, \Big\rangle_{1}\, ds.$$ 
Then we have
\begin{equation*}
\begin{split}
&\int_0^T \Big\langle\bar{ \rho}_s \,  , G_n (s, \cdot) \, \Big\rangle_{1}\, ds = \int_0^T \Big\langle \bar{\rho}_s \,  , \int_s^T {\bar \rho}_u du  \, \Big\rangle_{1}\, ds + \int_0^T \Big\langle\bar{ \rho}_s \,  , G_n (s, \cdot) - \int_s^T {\bar \rho}_u du \, \Big\rangle_{1}\, ds\\
&= \iint_{0 \le s < u \le T} \langle {\bar \rho}_s \, , \, {\bar \rho}_u \rangle_{1} \, du\, ds \; + \;  \int_0^T \Big\langle \bar{\rho}_s \,  ,  \int_s^T \{ H_n (u, \cdot) -{\bar \rho}_u\}  du \, \Big\rangle_{1}\, ds\\
&= \cfrac{1}{2} \, \iint_{[0,T]^2} \langle {\bar \rho}_s \, , \, {\bar \rho}_u \rangle_{1} \, du ds \; + \;  \int_0^T \Big\langle \bar{\rho}_s \,  , \int_s^T \{ H_n (u, \cdot) -{\bar \rho}_u\}  du \, \Big\rangle_{1}\, ds\\
&=\cfrac{1}{2} \; \Big\| \int_0^T {\bar \rho}_s ds \Big\|^2_{1}+ \;  \int_0^T \Big\langle \bar{\rho}_s \,  , \int_s^T \{ H_n (u, \cdot) -{\bar \rho}_u\}  du \, \Big\rangle_{1}\, ds.
\end{split}
\end{equation*}
To conclude the proof of ii) it is sufficient to prove that
$$\lim_{n \to \infty} \int_0^T \Big\langle \bar{\rho}_s \,  , \int_s^T \{ H_n (u, \cdot) -{\bar \rho}_u\}  du \, \Big\rangle_{1}\, ds =0.$$
This is a consequence of a successive use of Cauchy-Schwarz inequalities:
\begin{equation*}
\begin{split}
&\left| \int_0^T \Big\langle \bar{\rho}_s \,  , \int_s^T \{ H_n (u, \cdot) -{\bar \rho}_u\}  du \, \Big\rangle_{1}\, ds \right| \le \int_0^T \Big\| \bar{\rho}_s \Big\|_{1} \; \Big\| \int_s^T \{ H_n (u, \cdot) -{\bar \rho}_u \} du \Big\|_{1}\, ds\\
&\le \int_0^T \Big\| \bar{\rho}_s \Big\|_{1} \; \int_s^T \Big\| H_n (u, \cdot) -{\bar \rho}_u  \Big\|_{1}\, du \,  ds \\
&\le  \left(\int_0^T \Big\| \bar{\rho}_s \Big\|_{1} ds \right) \, \left(  \int_0^T \Big\|  H_n (u, \cdot) -{\bar \rho}_u  \Big\|_{1}\, du \right)\\
& \le T \, \sqrt{ \int_0^T \Big\| \bar{\rho}_s \Big\|^2_{1} ds} \; \sqrt{  \int_0^T \Big\|  H_n (u, \cdot) -{\bar \rho}_u  \Big\|_{1}^2\, du} \; \xrightarrow[n \to \infty]{} \; 0.
\end{split}
\end{equation*}
Above we have used again the fact that $(H_n)_{n\geq 0}$ converges to $\bar \rho$ as $N\to\infty$ with respect to the norm of $L^2 (0,T;{\mc H}_0^{1})$.

The proof of iii) is similar. We have
\begin{equation*}
\begin{split}
&\int_0^T \Big\langle \bar{\rho}_s \,  , G_n (s, \cdot) \, \Big\rangle_{V_1}\, ds = \int_0^T \Big\langle \bar{\rho}_s \,  , \int_s^T {\bar \rho}_u du  \, \Big\rangle_{V_1}\, ds + \int_0^T \Big\langle \bar{\rho}_s \,  , G_n (s, \cdot) - \int_s^T {\bar \rho}_u du \, \Big\rangle_{V_1}\, ds\\
&=\iint_{0 \le s < u \le T} \langle {\bar \rho}_s \, , \, {\bar \rho}_u \rangle_{V_1} \, du\, ds \; + \;  \int_0^T \Big\langle \bar{\rho}_s \,  ,  \int_s^T \{ H_n (u, \cdot) -{\bar \rho}_u\}  du \, \Big\rangle_{V_1}\, ds\\
&= \cfrac{1}{2} \, \iint_{[0,T]^2} \langle {\bar \rho}_s \, , \, {\bar \rho}_u \rangle_{V_1} \, du\, ds \; + \;  \int_0^T \Big\langle \bar{\rho}_s \,  , \int_s^T \{ H_n (u, \cdot) -{\bar \rho}_u\}  du \, \Big\rangle_{V_1}\, ds\\
&=\cfrac{1}{2} \; \Big\| \int_0^T {\bar \rho}_s ds \Big\|^2_{V_1}+ \;  \int_0^T \Big\langle \bar{\rho}_s \,  , \int_s^T \{ H_n (u, \cdot) -{\bar \rho}_u\}  du \, \Big\rangle_{V_1}\, ds.
\end{split}
\end{equation*}
To conclude the proof of iii) it is sufficient to prove that
$$\lim_{n \to \infty} \int_0^T \Big\langle \bar{\rho}_s \,  , \int_s^T \{ H_n (u, \cdot) -{\bar \rho}_u\}  du \, \Big\rangle_{V_1}\, ds =0.$$
This is a consequence of the Cauchy-Schwarz inequality:
\begin{equation*}
\begin{split}
&\left| \int_0^T \Big\langle \bar{\rho}_s \,  , \int_s^T \{ H_n (u, \cdot) -{\bar \rho}_u\}  du \, \Big\rangle_{V_1}\, ds \right| \le \int_0^T \Big\| \bar{\rho}_s \Big\|_{V_1} \; \Big\| \int_s^T \{ H_n (u, \cdot) -{\bar \rho}_u \} du \Big\|_{V_1}\, ds\\
&\le \int_0^T \Big\| \bar{\rho}_s \Big\|_{V_1} \; \int_s^T \Big\| H_n (u, \cdot) -{\bar \rho}_u  \Big\|_{V_1}\, du \,  ds \\
&\le \left(\int_0^T \Big\| \bar{\rho}_s \Big\|_{V_1} ds \right) \, \left(  \int_0^T \Big\|  H_n (u, \cdot) -{\bar \rho}_u  \Big\|_{V_1}\, du \right)\\
& \le T \, \sqrt{ \int_0^T \Big\| \bar{\rho}_s \Big\|^2_{V_1} ds} \; \sqrt{  \int_0^T \Big\|  H_n (u, \cdot) -{\bar \rho}_u  \Big\|_{V_1}^2\, du} \; \xrightarrow[n \to \infty]{} \; 0.
\end{split}
\end{equation*} 
\end{proof}

{Note that when $\hat \sigma>0$ and $\hat \kappa=0$ the proof above also shows uniqueness of the weak solution of the heat equation with Dirichlet boundary conditions.}

Now we look at the  case $\hat \sigma=0$. In this case we do not have any regularity assumption on $\bar \rho(\cdot)$. However, it can be proved that 
\begin{equation}
\int_0^T \int_0^1 {\bar \rho}^2_s (q) \, dq ds + \cfrac{\hat \kappa}{2}\;  \Big\|\int_0^T {\bar \rho}_s ds  \, \Big\|_{V_1}^2 =0
\end{equation}
holds by showing only the first and third item of the previous lemma. This requires only the density of $C_c^{1,\infty} ([0,T] \times (0,1))$ in $L^2 \Big( 0,T; L^2 ((0,1); V_1(q) dq) \,\Big)$. We also note that in the proof of item \emph{i)} in Lemma \ref{lem:unique-conv-007}, in order to conclude the convergence in  \eqref{use_uniq}, before applying the Cauchy-Schwarz inequality, we multiply and divide the integrand function by $V_1$ and since $V_1^{-1}$ is bounded we get that $\| {\bar \rho}_sV_1^{-1} \|_{L^2}^2<\infty$ and  the result follows.

\section{Computations involving the generator} \label{sec:gen_comp}
\label{sec:compgen}

\begin{lem}
\label{lem:compA}
For any $x \ne y \in \Lambda_N$, we have 
\begin{equation}
\begin{split}
L_N^0 (\eta_x \eta_y) &= \eta_x L_N^0 \eta_y + \eta_y L_N^0 \eta_x -  p(y-x) (\eta_y -\eta_x)^2,\\
L_N^r (\eta_x \eta_y) &= \eta_x L_N^r \eta_y + \eta_y L_N^r \eta_x,\\
L_N^\ell (\eta_x \eta_y) &= \eta_x L_N^\ell \eta_y + \eta_k L_N^\ell \eta_x. 
\end{split}
\end{equation}
\begin{proof}
By definition of $L_N^0$ we have that
\begin{equation*}
\begin{split}
L_N^0 (\eta_x \eta_y) &= \cfrac{1}{2}\sum _{u,v\in \Lambda_{N}}p(v-u)\left[(\sigma^{u,v}\eta_{x})(\sigma^{u,v}\eta_{y})-\eta_{x}\eta_{y}\right]\\
&=\cfrac{1}{2}\sum _{u,v\in \Lambda_{N}}p(v-u)\left[((\sigma^{u,v}\eta_{x})\eta_{y}-\eta_{x}\eta_{y})+((\sigma^{u,v}\eta_{y})\eta_{x}-\eta_{x}\eta_{y})+\right. \\
&\;\left. +(\sigma^{u,v}\eta_{x})(\sigma^{u,v}\eta_{y})-(\sigma^{u,v}\eta_{x})\eta_{y}-(\sigma^{u,v}
\eta_{y})\eta_{x}+\eta_{x}\eta_{y}\right]\\
&=\eta_x L_N^0 \eta_y + \eta_y L_N^0 \eta_x + \cfrac{1}{2}\sum _{u,v\in \Lambda_{N}}p(v-u)\left[(\sigma^{u,v}\eta_{x})-\eta_{x}\right] \left[ (\sigma^{u,v}\eta_{y})-\eta_{y}\right] \\
&= \eta_x L_N^0 \eta_y + \eta_y L_N^0 \eta_x -  p(y-x) (\eta_y -\eta_x)^2.
\end{split}
\end{equation*}
In order to prove the second expression, note that $\left[(\sigma^{u}\eta_{x})-\eta_{x}\right] \left[ (\sigma^{u}\eta_{y})-\eta_{y}\right] = 0$, for all $u \in \bZ$, thus by definition of $L_N^r$ we have
\begin{equation*}
\begin{split}
L_N^r (\eta_x \eta_y)&=\sum_{u\in\Lambda_{N},v\ge N}p(v-u)\left[\eta_{u}(1-\beta)+(1-\eta_{u})\beta\right]\left[(\sigma^{u}(\eta_{x}\eta_{y}))-\eta_{x}\eta_{y}\right]\\
&= \eta_x L_N^r \eta_y + \eta_y L_N^r \eta_x +\\
&+\sum_{u\in\Lambda_{N},v\ge N}p(v-u)\left[\eta_{u}(1-\beta)+(1-\eta_{u})\beta\right]\left[(\sigma^{u}\eta_{x})-\eta_{x}\right] \left[(\sigma^{u} \eta_{y})-\eta_{y}\right]\\
&=\eta_x L_N^r \eta_y + \eta_y L_N^r \eta_x.
\end{split}
\end{equation*}
The proof of the third expression is analogous.
\end{proof}
\end{lem}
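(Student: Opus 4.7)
} The plan is to treat the three generators in parallel by exploiting a single algebraic identity, and then to distinguish the bulk case (symmetric exchange $\sigma^{u,v}$ which affects two sites) from the boundary cases (flip $\sigma^{u}$ which affects only one site). For any transformation $T:\Omega_N \to \Omega_N$ and any sites $x,y \in \Lambda_N$ one has the trivial identity
\begin{equation*}
(T\eta)_x (T\eta)_y - \eta_x\eta_y = \bigl[(T\eta)_x - \eta_x\bigr]\eta_y + \eta_x\bigl[(T\eta)_y - \eta_y\bigr] + \bigl[(T\eta)_x - \eta_x\bigr]\bigl[(T\eta)_y - \eta_y\bigr].
\end{equation*}
The first two pieces, once multiplied by the corresponding rates and summed, reassemble into $\eta_y L \eta_x + \eta_x L\eta_y$; the third piece, a ``carré du champ'', is what we need to compute explicitly in each case.

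For $L_N^0$, I would apply the identity with $T=\sigma^{u,v}$ and sum over $\{u,v\}\subset\Lambda_N$ with weight $\tfrac12 p(v-u)$. The cross term $[(\sigma^{u,v}\eta)_x-\eta_x][(\sigma^{u,v}\eta)_y-\eta_y]$ vanishes unless the exchange bond actually touches both $x$ and $y$, i.e.\ unless $\{u,v\}=\{x,y\}$; in that case the two factors are $\eta_y-\eta_x$ and $\eta_x-\eta_y$, whose product is $-(\eta_y-\eta_x)^2$. Using that $p(\cdot)$ is symmetric to collapse the pair $\{x,y\}$, the carré du champ contribution is exactly $-p(y-x)(\eta_y-\eta_x)^2$, yielding the first identity.

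For $L_N^r$ and $L_N^\ell$, I would repeat the same decomposition but with $T=\sigma^{u}$ (single-site flip). Here the key observation, and the only substantive point of the proof, is that $\sigma^u$ changes the configuration at a single site, so at most one of the two factors $(\sigma^u\eta)_x-\eta_x$ and $(\sigma^u\eta)_y-\eta_y$ can be nonzero when $x\ne y$. Consequently the cross term vanishes identically, and no carré du champ survives; what remains is precisely $\eta_x L_N^{\bullet}\eta_y + \eta_y L_N^{\bullet}\eta_x$ for $\bullet\in\{r,\ell\}$.

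The only potential pitfall is bookkeeping: one must be careful that the rates $c_x(\eta;\alpha)$, $c_x(\eta;\beta)$ and the weights $p(x-y)r_N^{\pm}(x/N)$ are carried correctly through the decomposition, and in particular that the ``cross'' term really does vanish sitewise before one performs the summation over reservoir sites (so that the rates never interact with the vanishing identity). Once this is observed, the three identities follow directly without any further analytic input.
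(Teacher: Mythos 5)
Your proposal is correct and follows essentially the same route as the paper: the same product-rule decomposition of $(T\eta)_x(T\eta)_y-\eta_x\eta_y$ into the two ``linear'' pieces plus a cross term, with the cross term computed as $-p(y-x)(\eta_y-\eta_x)^2$ for the exchange $\sigma^{u,v}$ (using symmetry of $p$) and observed to vanish identically for the single-site flips $\sigma^u$ since $x\ne y$. The paper carries out exactly this computation, so there is nothing to add beyond your bookkeeping remark, which is indeed the only point requiring care.
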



\end{document}